\documentclass[a4paper]{amsart}
\usepackage{amscd}
\usepackage{amsmath}
\usepackage{mathrsfs}
\usepackage{amssymb}
\usepackage{amsthm}
\usepackage{bbm}
\usepackage{stmaryrd}

\usepackage[T1]{fontenc}

\makeatletter
\@namedef{subjclassname@2020}{%
  \textup{2020} Mathematics Subject Classification}
\makeatother

\newif\ifpdf
\ifx\pdfoutput\undefined
   \pdffalse        
\else
   \pdfoutput=1     
   \pdftrue
\fi

\ifpdf
   \usepackage[pdftex]{graphicx}
   \pdfadjustspacing=1
   \pdfcompresslevel=9
\else
   \usepackage{graphicx}
\fi

\frenchspacing

\numberwithin{equation}{section} \swapnumbers

\newtheorem{satz}{Satz}[section]

\newtheorem{theorem}[satz]{Theorem}
\newtheorem{proposition}[satz]{Proposition}
\newtheorem{corollary}[satz]{Corollary}
\newtheorem{lemma}[satz]{Lemma}

\newtheorem{remark}[satz]{Remark}

\newtheorem{example}[satz]{Example}

\newcommand{\bbr}{\mathbb{R}}
\newcommand{\bbe}{\mathbb{E}}
\newcommand{\bbn}{\mathbb{N}}
\newcommand{\bbp}{\mathbb{P}}
\newcommand{\bbq}{\mathbb{Q}}

\newcommand{\calb}{\mathcal{B}}

\newcommand{\calf}{\mathcal{F}}

\newcommand{\call}{\mathcal{L}}
\newcommand{\calm}{\mathcal{M}}

\newcommand{\ran}{{\rm ran}}

\newcommand{\Id}{{\rm Id}}
\newcommand{\N}{{\rm N}}
\newcommand{\Var}{{\rm Var}}
\newcommand{\tr}{{\rm tr}}
\newcommand{\lin}{{\rm lin}}

\newcommand{\la}{\langle}
\newcommand{\ra}{\rangle}

\newcommand{\bbI}{\mathbbm{1}}

\begin{document}

\title[Linear estimators for Gaussian random variables in Hilbert spaces]{Linear estimators for Gaussian random variables in Hilbert spaces}
\author{Stefan Tappe}
\address{Albert Ludwig University of Freiburg, Department of Mathematical Stochastics, Ernst-Zermelo-Stra\ss{}e 1, D-79104 Freiburg, Germany}
\email{stefan.tappe@math.uni-freiburg.de}
\address{University of Wuppertal, Department of Mathematics and Natural Sciences, Gau\ss{}-stra\ss{}e 20, D-42097 Wuppertal, Germany}
\email{tappe@uni-wuppertal.de}
\date{7 June, 2024}
\thanks{The author is grateful to Ludwig Baringhaus, Lyudmila Grigoryeva and Juan-Pablo Ortega for fruitful discussions. Moreover, the author gratefully acknowledges financial support from the Deutsche Forschungsgemeinschaft (DFG, German Research Foundation) --- project number 444121509.}
\begin{abstract}
We study a statistical model for infinite dimensional Gaussian random variables with unknown parameters. For this model we derive linear estimators for the mean and the variance of the Gaussian distribution. Furthermore, we construct confidence intervals and perform hypothesis testing. A linear regression problem in infinite dimensions and some perspectives to statistical and machine learning are presented as well.
\end{abstract}
\keywords{Gaussian random variable in a Hilbert space, statistical model, linear estimator, confidence interval, hypothesis testing, linear regression, statistical and machine learning}
\subjclass[2020]{62F03, 62F10, 62F25, 62J05, 28C20}

\maketitle

\section{Introduction}\label{sec-intro}

In this paper we study linear estimators for Gaussian random variables with values in a separable Hilbert space. More precisely, given a separable Hilbert space $H$, a closed subspace $U \subset H$, and a linear operator $Q \in L(H)$, the statistical model is of the form
\begin{align}\label{model}
\calm = \big( H, \calb(H), (\bbp_{\vartheta} : \vartheta \in \Theta) \big),
\end{align}
where the parameter domain is given by $\Theta := U \times (0,\infty)$, and where the probability measures are the Gaussian measures
\begin{align}\label{fam-measures}
\bbp_{\vartheta} := \N(\zeta,\sigma^2 Q), \quad \vartheta = (\zeta,\sigma) \in \Theta.
\end{align}
Hence, in this model we consider random variables
\begin{align}\label{observation}
Y = \zeta + \sigma \epsilon
\end{align}
with an infinite dimensional noise $\epsilon \sim {\rm N}(0,Q)$ and unknown parameters $(\zeta,\sigma) \in \Theta$. We are in particular interested in estimating the mean $\zeta \in U$ from an observation $Y$. In (\ref{observation}), the noise $\epsilon$ could be, for example, a centered Gaussian process.

There is a well-established literature about linear models in finite dimensions; see, for example \cite{Myers-Milton, Wichura} or Chapter 7 in the German textbook \cite{Czado-Schmidt}. However, so far there are only few references (such as \cite{Mandelbaum-0, Mandelbaum, Luschgy, Ghiglietti, Mollenhauer}) dealing with linear estimators or linear regression problems for infinite dimensional random variables. The goal of this paper is to generalize the findings about linear models in finite dimensions to the present infinite dimensional model (\ref{model}). This includes the construction of estimators for the mean and the variance, confidence intervals, hypothesis testing, and a linear regression problem in infinite dimensions. Some perspectives to statistical and machine learning are presented as well.

Our results are not straightforward, because for an infinite dimensional Hilbert space $H$ the covariance operator $Q$ must be a trace class operator, which excludes the identity operator. As a consequence, the Gaussian random variable $Y$ has a series representation of the form
\begin{align}\label{series-Gauss-intro}
Y = \zeta + \sigma \sum_{k \in K} \sqrt{\lambda_k} \beta_k e_k
\end{align}
with an orthonormal system $(e_k)_{k \in K}$, positive elements $(\lambda_k)_{k \in K} \subset (0,\infty)$ such that $\sum_{k \in K} \lambda_k < \infty$, and a sequence $(\beta_k)_{k \in K}$ of independent and identically distributed random variables such that $\beta_k \sim \N(0,1)$ for each $k \in K$. More precisely, the sequence $(e_k)_{k \in K}$ consists of eigenvectors of $Q$ with corresponding eigenvalues $(\lambda_k)_{k \in K}$. The representation (\ref{series-Gauss-intro}) shows that the distribution of $\| Y - \zeta \|^2$ is no longer a $\chi^2$-distribution. Its density is not available in closed form, and there is only an asymptotic formula; see \cite{Zolotarev}. We will overcome this difficulty by performing estimates depending on the largest eigenvalue of the operator $Q(\Pi_H - \Pi_U)$ and the dimension of its eigenspace.

The remainder of this paper is organized as follows. In Section \ref{sec-model} we introduce the statistical model. In Section \ref{sec-est-mean} we construct an estimator for the mean and discuss its properties. In Section \ref{sec-ML} we discuss some practical aspects and draw perspectives to statistical and machine learning. In Section \ref{sec-est-mean-2} we construct an estimator for functionals of the mean and present a generalization of the Gauss-Markov theorem to infinite dimensions. In Section \ref{sec-conf-known} we construct confidence intervals for the mean in the particular case that the variance is known. In Section \ref{sec-est-var} we construct an estimator for the variance and discuss its properties. In Section \ref{sec-conf-unknown} we construct confidence intervals for the mean in the general situation where the variance is unknown. In Section \ref{sec-test} we construct a statistical test for the hypothesis that the mean belongs to a smaller subspace. In Section \ref{sec-learning} we treat a linear regression problem in infinite dimensions, which may also have applications to statistical and machine learning. For convenience of the reader, in Appendix \ref{app-operators} we provide the required results about linear operators in Hilbert spaces, in Appendix \ref{app-Gaussian} we provide the required results about Gaussian random variables in Hilbert spaces, and in Appendix \ref{app-distributions} we provide the required results about the Student's $t$-distribution and the Fisher distribution.

\section{The statistical model}\label{sec-model}

In this section we introduce the statistical model. Let $(H, \langle \cdot,\cdot \rangle)$ be a separable Hilbert space. We fix a closed subspace $U \subset H$. Typically, this subspace is chosen such that $U \neq \{ 0 \}$ and $U \neq H$. Let $Q \in L_1^+(H)$ be a nonnegative self-adjoint nuclear operator such that $U$ and $U^{\perp}$ are $Q$-invariant. We refer to Appendix \ref{app-operators} for further details about linear operators in Hilbert spaces.

Consider the statistical model (\ref{model}), where the parameter domain
is given by $\Theta := U \times (0,\infty)$, and where the probability measures are the Gaussian measures given by (\ref{fam-measures}). In other words, we have an $H$-valued Gaussian random variable $Y$ with unknown parameters. More precisely, for each $\vartheta = (\zeta,\sigma) \in \Theta$ we have $\bbp_{\vartheta} \circ Y^{-1} = \N(\zeta,\sigma^2 Q)$. We refer to Appendix \ref{app-Gaussian} for further details about Gaussian random variables in Hilbert spaces.

\begin{remark}\label{rem-pivot}
Note that for each $\vartheta = (\zeta, \sigma) \in \Theta$ we have
\begin{align*}
\bbp_{\vartheta} \circ \bigg( \frac{Y - \zeta}{\sigma} \bigg)^{-1} = \N(0,Q).
\end{align*}
Hence for every linear operator $T \in L(H)$ with $U \subset \ker(T)$ the random variable
\begin{align*}
T \bigg( \frac{Y - \zeta}{\sigma} \bigg) = T \bigg( \frac{Y}{\sigma} \bigg)
\end{align*}
is a pivot statistics.
\end{remark}

As a consequence of the Karhunen-Lo\`{e}ve theorem, the statistical model (\ref{model}) covers Gaussian processes; see, for example \cite[Thm. 4.6]{Giambartolomei}. More precisely, let $X$ be a centered Gaussian process.
Consider $H := L^2([0,1])$ and the integral operator $Q \in L(H)$ given by
\begin{align}\label{integral-operator}
Qh(s) = \int_0^1 k(s,t) h(t) dt, \quad h \in H,
\end{align}
where the kernel $k \in L^2 \big( [0,1]^2 \big)$ denotes the covariance function of the Gaussian process $X$. Then the random variable (\ref{observation}) can be written as
\begin{align}\label{Gaussian-observation}
Y = \zeta + \sigma X
\end{align}
with unknown parameters $(\zeta,\sigma) \in \Theta$. Hence, an observation $Y$ is a superposition of an unknown signal $\zeta \in U$ and the perturbation $\sigma X$, and we are interested in estimating $\zeta$ from this observation.

\begin{example}[Wiener process]\label{ex-Wiener}
Suppose that $Q$ is the integral operator (\ref{integral-operator}) with kernel $k(s,t) = \min \{ s,t \}$ for all $s,t \in [0,1]$. Then $Q \in L_1^{+}(H)$ is the nonnegative self-adjoint nuclear operator with eigenvalues and eigenvectors given by
\begin{align}\label{Wiener-eigenvalues}
\lambda_k &= \frac{1}{(k - \frac{1}{2})^2 \pi^2}, \quad k \in \bbn,
\\ \label{Wiener-eigenvectors} e_k &= \sqrt{2} \, \sin \bigg( \Big( k - \frac{1}{2} \Big) \pi \bullet \bigg), \quad k \in \bbn.
\end{align}
Note that $( e_k )_{k \in \bbn}$ is an orthonormal system, which is not complete. Indeed, for every $h \in \overline{\lin} \{ e_k : k \in \bbn \}$ we have $h(0) = 0$, which implies that $Q$ is not strictly positive. By \cite[Thm. 5.14]{Giambartolomei} the series
\begin{align*}
W = \sum_{k=1}^{\infty} \sqrt{\lambda_k} \beta_k e_k
\end{align*}
is a standard Wiener process on the interval $[0,1]$. Therefore, the random variable (\ref{Gaussian-observation}) can be written as
\begin{align}\label{Wiener-observation}
Y = \zeta + \sigma W
\end{align}
with a Wiener process $W$ and unknown parameters $(\zeta,\sigma) \in \Theta$. Using (\ref{Wiener-eigenvalues}), the trace of the covariance operator is given by
\begin{align}\label{Wiener-trace}
\tr(Q) = \sum_{k=1}^{\infty} \lambda_k = \frac{4}{\pi^2} \sum_{k=1}^{\infty} \frac{1}{(2k-1)^2} = \frac{4}{\pi^2} \cdot \frac{\pi^2}{8} = \frac{1}{2}.
\end{align}
\end{example}

\begin{example}[Brownian bridge]
Suppose that $Q$ is the integral operator (\ref{integral-operator}) with kernel $k(s,t) = \min \{ s,t \} - st$ for all $s,t \in [0,1]$. Then $Q \in L_1^{+}(H)$ is the nonnegative self-adjoint nuclear operator with eigenvalues and eigenvectors given by
\begin{align}\label{Bridge-eigenvalues}
\lambda_k &= \frac{1}{k^2 \pi^2}, \quad k \in \bbn,
\\ \label{Bridge-eigenvectors} e_k &= \sqrt{2} \, \sin ( k \pi \bullet ), \quad k \in \bbn.
\end{align}
Note that $( e_k )_{k \in \bbn}$ is an orthonormal system, which is not complete. Indeed, for every $h \in \overline{\lin} \{ e_k : k \in \bbn \}$ we have $h(0) = h(1) = 0$, which implies that $Q$ is not strictly positive. By \cite[Thm. 5.17]{Giambartolomei} the series
\begin{align*}
B = \sum_{k=1}^{\infty} \sqrt{\lambda_k} \beta_k e_k
\end{align*}
is a Brownian bridge on the interval $[0,1]$. Therefore, the random variable (\ref{Gaussian-observation}) can be written as
\begin{align*}
Y = \zeta + \sigma B
\end{align*}
with a Brownian bridge $B$ and unknown parameters $(\zeta,\sigma) \in \Theta$. Using (\ref{Bridge-eigenvalues}), the trace of the covariance operator is given by
\begin{align*}
\tr(Q) = \sum_{k=1}^{\infty} \lambda_k = \frac{1}{\pi^2} \sum_{k=1}^{\infty} \frac{1}{k^2} = \frac{1}{\pi^2} \cdot \frac{\pi^2}{6} =  \frac{1}{6}.
\end{align*}
\end{example}

Now, let us return to the statistical model (\ref{model}). Let $T : H \to H$ be a measurable mapping such that $T(Y) \in L^2(\bbp_{\vartheta})$ for each $\vartheta \in \Theta$. For the estimator $T(Y)$ we introduce the following notions:
\begin{itemize}
\item We call
\begin{align*}
B_{\vartheta}[T(Y)] := \| \bbe_{\vartheta}[T(Y)] - \zeta \|, \quad \vartheta \in \Theta
\end{align*}
the \emph{bias} of $T(Y)$.

\item We call
\begin{align*}
\Var_{\vartheta}[T(Y)] := \bbe_{\vartheta} \big[ \| T(Y) - \bbe_{\vartheta}[T(Y)] \|^2 \big], \quad \vartheta \in \Theta
\end{align*}
the \emph{variance} of $T(Y)$.

\item We call
\begin{align*}
R_{\vartheta}[T(Y)] := \bbe_{\vartheta} \big[ \| T(Y) - \zeta \|^2 \big], \quad \vartheta \in \Theta
\end{align*}
the \emph{risk} (or \emph{mean squared error}) of $T(Y)$.
\end{itemize}

\begin{lemma}\label{lemma-risk-decomposition}
For each $\vartheta \in \Theta$ we have the decomposition
\begin{align*}
R_{\vartheta}[T(Y)] = \Var_{\vartheta}[T(Y)] + B_{\vartheta}[T(Y)]^2.
\end{align*}
In particular, if $T(Y)$ is an unbiased estimator, then for each $\vartheta \in \Theta$ we have
\begin{align*}
R_{\vartheta}[T(Y)] = \Var_{\vartheta}[T(Y)].
\end{align*}
\end{lemma}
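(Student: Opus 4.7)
The plan is to carry out the standard bias-variance decomposition, but in the Hilbert space setting with $\|\cdot\|^2$ in place of the scalar squared error. Fix $\vartheta \in \Theta$ and write $\mu := \bbe_\vartheta[T(Y)] \in H$, which exists as a Bochner integral since $T(Y) \in L^2(\bbp_\vartheta)$. The idea is to insert $\pm \mu$ inside the norm $\|T(Y) - \zeta\|$ and expand using the inner product.

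First I would expand
\begin{align*}
\|T(Y) - \zeta\|^2 = \|\big(T(Y) - \mu\big) + \big(\mu - \zeta\big)\|^2 = \|T(Y) - \mu\|^2 + 2\la T(Y) - \mu,\, \mu - \zeta\ra + \|\mu - \zeta\|^2,
\end{align*}
using the polarization of the Hilbert norm. Then I would take $\bbe_\vartheta$ of both sides. The first term yields $\Var_\vartheta[T(Y)]$ by definition, and the last term is the constant $B_\vartheta[T(Y)]^2$. The key step is that the cross term vanishes: since $\mu - \zeta$ is deterministic, we may pull it out of the (Bochner) expectation, obtaining
\begin{align*}
\bbe_\vartheta\big[ \la T(Y) - \mu,\, \mu - \zeta\ra \big] = \la \bbe_\vartheta[T(Y) - \mu],\, \mu - \zeta\ra = \la 0,\, \mu - \zeta\ra = 0.
\end{align*}
This uses the standard fact that for a Bochner-integrable $H$-valued random variable $Z$ and a fixed vector $v \in H$, one has $\bbe[\la Z,v\ra] = \la \bbe[Z], v\ra$, which follows from continuity and linearity of $\la \cdot,v\ra$.

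Assembling the three pieces gives the asserted decomposition $R_\vartheta[T(Y)] = \Var_\vartheta[T(Y)] + B_\vartheta[T(Y)]^2$. The ``in particular'' statement is then immediate: if $T(Y)$ is unbiased, i.e. $\bbe_\vartheta[T(Y)] = \zeta$, then $B_\vartheta[T(Y)] = \|\zeta - \zeta\| = 0$ and $R_\vartheta[T(Y)] = \Var_\vartheta[T(Y)]$. There is no real obstacle here; the only point requiring a little care is the justification of pulling a deterministic vector out of the Bochner integral in the cross term, but this is a routine property of the Bochner integral already available from the material referenced in Appendix~\ref{app-Gaussian}.
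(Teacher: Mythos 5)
Your proposal is correct and follows essentially the same route as the paper: insert $\pm\,\bbe_\vartheta[T(Y)]$ inside the norm, expand via the inner product, and observe that the cross term vanishes because the deterministic vector $\bbe_\vartheta[T(Y)]-\zeta$ can be pulled out of the expectation. Your explicit remark on the Bochner-integral property $\bbe[\la Z,v\ra]=\la\bbe[Z],v\ra$ is a harmless extra justification of a step the paper performs implicitly.
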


\begin{proof}
A simple calculation shows that
\begin{align*}
R_{\vartheta}[T(Y)] &= \bbe_{\vartheta} \big[ \| T(Y) - \zeta \|^2 \big]
\\ &= \bbe_{\vartheta} \big[ \| T(Y) - \bbe_{\vartheta}[T(Y)] + \bbe_{\vartheta}[T(Y)] - \zeta \|^2 \big]
\\ &= \bbe_{\vartheta} \big[ \| T(Y) - \bbe_{\vartheta}[T(Y)] \|^2 + 2 \la T(Y) - \bbe_{\vartheta}[T(Y)], \bbe_{\vartheta}[T(Y)] - \zeta \ra
\\ &\qquad \quad + \| \bbe_{\vartheta}[T(Y)] - \zeta \|^2 \big]
\\ &= \bbe_{\vartheta} \big[ \| T(Y) - \bbe_{\vartheta}[T(Y)] \|^2 \big] + 2 \la \bbe_{\vartheta} [ T(Y) - \bbe_{\vartheta}[T(Y)] ], \bbe_{\vartheta}[T(Y)] - \zeta \ra
\\ &\quad + \| \bbe_{\vartheta}[T(Y)] - \zeta \|^2
\\ &= \Var_{\vartheta}[T(Y)] + B_{\vartheta}[T(Y)]^2,
\end{align*}
completing the proof.
\end{proof}

\section{Estimation of the mean}\label{sec-est-mean}

In this section we estimate the mean $\zeta$ of the statistical model (\ref{model}). We define $\widehat{\zeta} \in L(H)$ as the orthogonal projection $\widehat{\zeta} := \Pi_U$ and consider the estimator $\widehat{\zeta}(Y)$. As an immediate consequence of Corollary \ref{cor-proj-Gauss} we obtain:

\begin{proposition}\label{prop-zeta}
For each $\vartheta = (\zeta,\sigma) \in \Theta$ we have $\bbp_{\vartheta} \circ \widehat{\zeta}(Y)^{-1} = {\rm N}(\zeta,\sigma^2 Q \Pi_U)$.
\end{proposition}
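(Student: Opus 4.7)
The plan is straightforward: apply the general transformation rule for Gaussian measures in Hilbert spaces to the bounded linear operator $\widehat\zeta = \Pi_U$ acting on $Y$, and then simplify the resulting parameters using the assumption that $\zeta \in U$ and that $U$, $U^\perp$ are $Q$-invariant. The excerpt explicitly advertises this as an immediate consequence of Corollary \ref{cor-proj-Gauss} (in the appendix on Gaussian random variables), so the work lies entirely in identifying the right mean and covariance operator.

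First I would recall that under $\bbp_\vartheta$ we have $Y \sim \N(\zeta,\sigma^2 Q)$, so that, by the behaviour of Gaussian measures under bounded linear maps, $\Pi_U Y$ is again Gaussian with mean $\Pi_U \zeta$ and covariance operator $\Pi_U (\sigma^2 Q) \Pi_U^\ast = \sigma^2 \Pi_U Q \Pi_U$, using that the orthogonal projection is self-adjoint. Next I would simplify the mean: since $\zeta \in U$, we immediately get $\Pi_U \zeta = \zeta$.

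The covariance simplification is the only substantive point. I would use the hypothesis that both $U$ and $U^\perp$ are $Q$-invariant. For any $h \in H$, decompose $h = \Pi_U h + (\Id - \Pi_U) h$; then $Q \Pi_U h \in U$ and $Q(\Id - \Pi_U) h \in U^\perp$, whence
\begin{align*}
\Pi_U Q h \;=\; Q \Pi_U h,
\end{align*}
so $\Pi_U$ and $Q$ commute. Combined with $\Pi_U^2 = \Pi_U$, this yields
\begin{align*}
\Pi_U Q \Pi_U \;=\; Q \Pi_U \Pi_U \;=\; Q \Pi_U,
\end{align*}
and therefore the covariance equals $\sigma^2 Q \Pi_U$, exactly as claimed.

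I do not expect any genuine obstacle. The only thing to be careful about is invoking the correct form of the transformation rule for Hilbert-space Gaussians (which the cited Corollary \ref{cor-proj-Gauss} supplies) and exploiting the $Q$-invariance of $U$ and $U^\perp$ in the clean commutation identity $\Pi_U Q = Q \Pi_U$; once this is in hand, the identification $\Pi_U Q \Pi_U = Q \Pi_U$ is immediate and the proposition follows in a single line.
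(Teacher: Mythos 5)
Your proposal is correct and follows the paper's own route exactly: the paper derives the proposition as an immediate consequence of Corollary \ref{cor-proj-Gauss}, which in turn combines the transformation rule of Proposition \ref{prop-Gauss-rv}(d) with the commutation identity $\Pi_U Q \Pi_U = Q\Pi_U$ of Lemma \ref{lemma-Q-Pi}. The only difference is that you spell out the (correct) one-line verification of that commutation identity, which the paper declares obvious.
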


In particular, $\widehat{\zeta}(Y)$ is an unbiased estimator for $\zeta$. Moreover, as an immediate consequence of Proposition \ref{prop-zeta}, Lemma \ref{lemma-risk-decomposition} and Proposition \ref{prop-Gauss-rv} we obtain:

\begin{lemma}\label{lemma-risk-mean}
We have $R_{\vartheta}[\widehat{\zeta}(Y)] = \sigma^2 \tr(Q \Pi_U)$ for each $\vartheta = (\zeta,\sigma^2) \in \Theta$.
\end{lemma}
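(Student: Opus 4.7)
The plan is to chain together the three results cited just before the lemma. First I observe that Proposition \ref{prop-zeta} identifies the distribution of $\widehat{\zeta}(Y)$ under $\bbp_\vartheta$ as $\N(\zeta,\sigma^2 Q \Pi_U)$. In particular, the mean of $\widehat{\zeta}(Y)$ is exactly $\zeta$, so $B_\vartheta[\widehat{\zeta}(Y)] = 0$ for all $\vartheta \in \Theta$. By the ``in particular'' part of Lemma \ref{lemma-risk-decomposition}, the risk and variance coincide: $R_\vartheta[\widehat{\zeta}(Y)] = \Var_\vartheta[\widehat{\zeta}(Y)]$.

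Next I compute the variance. Since $\widehat{\zeta}(Y) \sim \N(\zeta,\sigma^2 Q \Pi_U)$ and $\bbe_\vartheta[\widehat{\zeta}(Y)] = \zeta$, Proposition \ref{prop-Gauss-rv} (which, by the surrounding notation, must be the statement that a Gaussian random variable $X \sim \N(\mu,C)$ in a Hilbert space satisfies $\bbe[\|X - \mu\|^2] = \tr(C)$) yields
\begin{align*}
\Var_\vartheta[\widehat{\zeta}(Y)] = \bbe_\vartheta\big[\| \widehat{\zeta}(Y) - \zeta \|^2\big] = \tr(\sigma^2 Q \Pi_U) = \sigma^2 \tr(Q \Pi_U),
\end{align*}
using linearity of the trace. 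Combining the two displays gives the claim.

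There is essentially no obstacle: all of the ingredients have been lined up in the preceding paragraph, and the only thing to verify is that trace is homogeneous in the scalar $\sigma^2$, which is immediate. The slight bookkeeping point to watch is the typo in the statement (``$\vartheta = (\zeta,\sigma^2)$'' versus the standing convention $\vartheta = (\zeta,\sigma)$ from Section \ref{sec-model}); the proof is unaffected.
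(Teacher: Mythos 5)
Your proof is correct and follows exactly the route the paper intends: the paper gives no written proof but declares the lemma an immediate consequence of Proposition \ref{prop-zeta}, Lemma \ref{lemma-risk-decomposition} and Proposition \ref{prop-Gauss-rv}, and your chain (distributional identification, unbiasedness, risk equals variance, then $\bbe[\|X-\mu\|^2]=\tr(C)$ applied to $C=\sigma^2 Q\Pi_U$) is precisely that argument spelled out. Nothing to add.
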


The following result shows that the estimator $\widehat{\zeta}(Y)$ is risk minimizing for $\zeta$.

\begin{theorem}\label{thm-risk-min}
 For each self-adjoint operator $T \in L(H)$ such that $T(Y)$ is an unbiased linear estimator for $\zeta$ we have
\begin{align*}
R_{\vartheta} [ \widehat{\zeta}(Y) ] \leq R_{\vartheta} [ T(Y) ] \quad \text{for all $\vartheta = (\zeta,\sigma^2) \in \Theta$.}
\end{align*}
\end{theorem}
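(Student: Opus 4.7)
The plan is to reduce the inequality to a pointwise Pythagorean identity applied to the noise. First, unbiasedness of the linear estimator $T(Y)$ forces $T|_U = \Id_U$, because $\bbe_{\vartheta}[T(Y)] = T\zeta$ must equal $\zeta$ for every $\zeta \in U$. Using self-adjointness of $T$, this implies that $U^{\perp}$ is $T$-invariant: for $u \in U$ and $v \in U^{\perp}$ one has $\la Tv, u \ra = \la v, Tu \ra = \la v, u \ra = 0$, hence $Tv \in U^{\perp}$. Setting $S := T \Pi_{U^{\perp}}$, this yields the decomposition $T = \Pi_U + S$, where $S$ vanishes on $U$ and has range contained in $U^{\perp}$.

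Next, I would express both risks in terms of the noise. Writing $Y = \zeta + \sigma \epsilon$ with $\epsilon \sim \N(0,Q)$, the unbiasedness of $T(Y)$ together with Lemma \ref{lemma-risk-decomposition} gives $R_{\vartheta}[T(Y)] = \sigma^2 \bbe[\|T \epsilon\|^2]$, and analogously $R_{\vartheta}[\widehat{\zeta}(Y)] = \sigma^2 \bbe[\|\Pi_U \epsilon\|^2]$, which is consistent with Lemma \ref{lemma-risk-mean}. Applying the decomposition above, $T \epsilon = \Pi_U \epsilon + S \epsilon$, and since $\Pi_U \epsilon \in U$ and $S \epsilon \in U^{\perp}$ are pathwise orthogonal, Pythagoras yields $\|T \epsilon\|^2 = \|\Pi_U \epsilon\|^2 + \|S \epsilon\|^2$ pointwise. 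Taking expectations, $\bbe[\|T \epsilon\|^2] \geq \bbe[\|\Pi_U \epsilon\|^2]$, and multiplying by $\sigma^2$ delivers the claim.

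The one step that requires care is deducing $T$-invariance of $U^{\perp}$ from self-adjointness, since this is precisely what makes the off-diagonal block $\Pi_U T \Pi_{U^{\perp}}$ vanish and cleanly separates the contributions of $U$ and $U^{\perp}$. After this observation the inequality becomes a purely geometric fact; in particular, the $Q$-invariance of $U$ and $U^{\perp}$ assumed in the model is used only implicitly through the risk formulas, not in the Pythagorean step itself.
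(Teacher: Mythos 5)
Your proof is correct, and it takes a genuinely different route from the paper's. Both arguments begin identically: unbiasedness forces $T|_U = \Id|_U$, and self-adjointness then makes $U^{\perp}$ $T$-invariant (this is exactly Lemma \ref{lemma-self-adjoint} applied with $T^* = T$). From there the paper stays at the level of covariance operators: it computes $\bbp_{\vartheta} \circ T(Y)^{-1} = \N(\zeta, \sigma^2 TQT)$, uses the $Q$-invariance of $U$ and $U^{\perp}$ to split $\tr(TQT) = \tr(TQT\Pi_U) + \tr(TQT\Pi_{U^{\perp}})$ with $TQT\Pi_U = Q\Pi_U$, and reads off the inequality from the nonnegativity of $\tr(TQT\Pi_{U^{\perp}})$. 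You instead work pathwise: writing $T = \Pi_U + S$ with $S := T\Pi_{U^{\perp}}$, you get $T\epsilon = \Pi_U\epsilon + S\epsilon$ with the two summands orthogonal in $H$ for every realization of the noise, so $\|T\epsilon\|^2 \geq \|\Pi_U\epsilon\|^2$ holds almost surely before any expectation is taken. This is a strictly stronger intermediate statement and, as you note, it isolates where each hypothesis enters: the Pythagorean step needs only $T|_U = \Id|_U$ and the $T$-invariance of $U^{\perp}$, while the $Q$-invariance assumptions are used only to match $\bbe[\|\Pi_U\epsilon\|^2]$ with the expression $\tr(Q\Pi_U)$ of Lemma \ref{lemma-risk-mean} (and even that identification is cosmetic, since $\bbe[\|\Pi_U\epsilon\|^2] = \tr(\Pi_U Q \Pi_U)$ in general). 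The paper's trace computation, on the other hand, stays uniformly within the operator-calculus toolkit it has already set up and exhibits the risk gap explicitly as $\sigma^2\tr(TQT\Pi_{U^{\perp}})$, which equals your $\sigma^2\bbe[\|S\epsilon\|^2]$. One small remark: you do not actually need Lemma \ref{lemma-risk-decomposition} to get $R_{\vartheta}[T(Y)] = \sigma^2\bbe[\|T\epsilon\|^2]$, since $T(Y) - \zeta = \sigma T\epsilon$ directly from $T\zeta = \zeta$; the decomposition lemma is redundant in your route.
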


\begin{proof}
Let $\vartheta = (\zeta,\sigma^2) \in \Theta$ be arbitrary. Furthermore, let $T \in L(H)$ be a self-adjoint operator such that $T(Y)$ is an unbiased linear estimator for $\zeta$. Then we have
\begin{align*}
T(\zeta) = T(\bbe_{\vartheta}[Y]) = \bbe_{\vartheta}[T(Y)] = \zeta,
\end{align*}
showing that $T|_U = \Id|_U$. Therefore, and since $T$ is self-adjoint, by Lemma \ref{lemma-self-adjoint} the subspace $U^{\perp}$ is $T$-invariant. Furthermore, by Proposition \ref{prop-Gauss-rv} we have
\begin{align*}
\bbp_{\vartheta} \circ T(Y)^{-1} = {\rm N}(T(\zeta),\sigma^2 T Q T^*) = {\rm N}(\zeta,\sigma^2 T Q T).
\end{align*}
Since $U^{\perp}$ is $Q$-invariant and $T$-invariant, we have
\begin{align*}
T Q T \Pi_{U^{\perp}} = \Pi_{U^{\perp}} T Q T \Pi_{U^{\perp}} \in L_1^+(H).
\end{align*}
Moreover, since $U$ is $Q$-invariant and $T|_U = \Id|_U$, we have
\begin{align*}
Q \Pi_U = T Q T \Pi_U.
\end{align*}
Therefore, by Lemma \ref{lemma-risk-mean} we obtain
\begin{align*}
R_{\vartheta} [ \widehat{\zeta}(Y) ] &= \sigma^2 \tr(Q \Pi_U) \leq \sigma^2 \tr(Q \Pi_U) + \sigma^2 \tr(T Q T \Pi_{U^{\perp}})
\\ &= \sigma^2 \tr(T Q T \Pi_U) + \sigma^2 \tr(T Q T \Pi_{U^{\perp}}) = \sigma^2 \tr(T Q T) = R_{\vartheta} [ T(Y) ],
\end{align*}
where we have used Lemma \ref{lemma-risk-decomposition} and Proposition \ref{prop-Gauss-rv} in the last step.
\end{proof}

\section{Perspectives to statistical and machine learning}\label{sec-ML}

In this section we discuss some practical aspects of the estimation of the mean and draw perspectives to statistical and machine learning. Note that the statistical model (\ref{model}) gives rise to a single observation $Y$ with values in the Hilbert space $H$, which is typically large. As we can see from the series representation (\ref{series-Gauss-intro}), this corresponds to a sequence of random variables, where long-term observations become decreasingly important.

In reality, the random variable $Y$ may only be partially observable; say we may only be able to observe the random variable $\Pi_V Y$ for some subspace $V \subset U$ rather than $Y$. In this situation the estimator $\Pi_V Y$ is typically biased. The corresponding risk admits the following decomposition.

\begin{proposition}\label{prop-learning-1}
Let $V \subset U$ be a closed subspace such that $V$ and $V^{\perp}$ are $Q$-invariant. Then for all $\vartheta = (\zeta,\sigma^2) \in \Theta$ we have the decomposition
\begin{align*}
R_{\vartheta} [ \Pi_V Y ] = \sigma^2 \tr (Q \Pi_V) + \| \Pi_{V^{\perp}} \zeta \|^2.
\end{align*}
\end{proposition}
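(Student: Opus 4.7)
The plan is to apply the risk decomposition from Lemma~\ref{lemma-risk-decomposition} and then separately identify the variance and bias of the estimator $\Pi_V Y$, exploiting the assumption that $V$ and $V^\perp$ are $Q$-invariant.

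First I would compute the distribution of $\Pi_V Y$ under $\bbp_\vartheta$. By Proposition~\ref{prop-Gauss-rv} (and/or Corollary~\ref{cor-proj-Gauss}), applying the bounded linear operator $\Pi_V$ to $Y \sim \N(\zeta, \sigma^2 Q)$ yields
\[
\bbp_\vartheta \circ (\Pi_V Y)^{-1} = \N(\Pi_V \zeta, \sigma^2 \Pi_V Q \Pi_V^*).
\]
Since $\Pi_V$ is self-adjoint and, by the $Q$-invariance of $V$ and $V^\perp$, the projection $\Pi_V$ commutes with $Q$, the covariance operator simplifies to $\sigma^2 \Pi_V Q \Pi_V = \sigma^2 Q \Pi_V$.

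Next I would read off the bias and the variance. The expected value of $\Pi_V Y$ is $\Pi_V \zeta$, and since $\zeta \in U$ with $V \subset U$ we obtain
\[
B_\vartheta[\Pi_V Y] = \| \Pi_V \zeta - \zeta \| = \| \Pi_{V^\perp} \zeta \|.
\]
The variance is the trace of the covariance operator, i.e.\ $\Var_\vartheta[\Pi_V Y] = \sigma^2 \tr(Q \Pi_V)$, which is finite since $Q \Pi_V$ is nuclear as a product of a nuclear operator and a bounded one.

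Finally, substituting both into the decomposition $R_\vartheta[\Pi_V Y] = \Var_\vartheta[\Pi_V Y] + B_\vartheta[\Pi_V Y]^2$ from Lemma~\ref{lemma-risk-decomposition} yields the claimed formula. There is no real obstacle here; the only point that needs a moment of care is justifying $\Pi_V Q \Pi_V = Q \Pi_V$ from the $Q$-invariance of $V$ and $V^\perp$ (which is precisely what makes $\Pi_V$ and $Q$ commute), so that the covariance operator takes the clean form $\sigma^2 Q \Pi_V$ appearing in the statement.
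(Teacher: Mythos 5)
Your proposal is correct and follows essentially the same route as the paper: compute the law of $\Pi_V Y$ via Proposition \ref{prop-Gauss-rv}, read off bias and variance, and combine them with Lemma \ref{lemma-risk-decomposition}. The only difference is that you make explicit the simplification $\Pi_V Q \Pi_V = Q \Pi_V$ from the $Q$-invariance assumptions (Lemma \ref{lemma-Q-Pi}), which the paper leaves implicit.
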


\begin{proof}
Let $\vartheta = (\zeta,\sigma^2) \in \Theta$ be arbitrary. By Proposition \ref{prop-Gauss-rv} we have
\begin{align*}
\bbp_{\vartheta} \circ (\Pi_V Y)^{-1} = \N(\Pi_V \zeta,\sigma^2 Q \Pi_V).
\end{align*}
Therefore, by Lemma \ref{lemma-risk-decomposition} and Proposition \ref{prop-Gauss-rv} we obtain
\begin{align*}
R_{\vartheta} [ \Pi_V Y ] &= \Var_{\vartheta}[ \Pi_V Y ] + B_{\vartheta}[ \Pi_V Y ]^2
\\ &= \sigma^2 \tr (Q \Pi_V) + \| \Pi_V \zeta - \zeta \| = \sigma^2 \tr (Q \Pi_V) + \| \Pi_{V^{\perp}} \zeta \|^2,
\end{align*}
completing the proof.
\end{proof}

Now, suppose that the closed subspace $U$ is infinite dimensional. Furthermore, we assume there are an orthonormal system $(f_k)_{k \in \bbn}$ and positive elements $(\mu_k)_{k \in \bbn} \subset (0,\infty)$ with $\sum_{k=1}^{\infty} \mu_k < \infty$ such that
\begin{align*}
U = \overline{\lin} \{ f_k : k \in \bbn \}
\end{align*}
as well as
\begin{align*}
Q f_k = \mu_k f_k \quad \text{for all $k \in \bbn$.}
\end{align*}

Note that in view of Remark \ref{rem-spectral} these are natural conditions.

\begin{example}
Consider the statistical model (\ref{model}) from Example \ref{ex-Wiener} with
\begin{align*}
U := \overline{\lin} \{ e_{2k} : k \in \bbn \}.
\end{align*}
Then the sequences $(f_k)_{k \in \bbn}$ and $(\mu_k)_{k \in \bbn}$ are given by
\begin{align*}
f_k &= \sqrt{2} \, \sin \bigg( \Big( 2k - \frac{1}{2} \Big) \pi \bullet \bigg),
\\ \mu_k &= \frac{1}{(2k - \frac{1}{2})^2 \pi^2}.
\end{align*}
\end{example}

Now, we define the subspaces
\begin{align*}
V_n := \lin \{ f_1,\ldots,f_n \}, \quad n \in \bbn,
\end{align*}
and consider the estimators
\begin{align*}
\widehat{\zeta}_n(Y) := \Pi_{V_n} Y, \quad n \in \bbn,
\end{align*}
which give rise to an algorithm. In statistical and machine learning the question arises how fast this algorithm is learning. Recalling Theorem \ref{thm-risk-min}, this issue is addressed by the following result.

\begin{proposition}
For each $\vartheta = (\zeta,\sigma^2) \in \Theta$ we have
\begin{align*}
R_{\vartheta}[\widehat{\zeta}_n(Y)] - R_{\vartheta} [\widehat{\zeta}(Y)] = \sum_{k=n+1}^{\infty} ( \zeta_k^2 - \sigma^2 \lambda_k ) \to 0 \quad \text{for $n \to \infty$,}
\end{align*}
where $\zeta_k := \la \zeta,e_k \ra$ for each $k \in \bbn$.
\end{proposition}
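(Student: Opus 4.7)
The plan is to combine Proposition \ref{prop-learning-1} with Lemma \ref{lemma-risk-mean} and then identify every quantity in the eigenbasis of $Q$ restricted to $U$.

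First I would check that Proposition \ref{prop-learning-1} applies to $V_n \subset U$. Since $V_n = \lin\{f_1,\ldots,f_n\}$ is spanned by eigenvectors of $Q$, it is $Q$-invariant; because $Q$ is self-adjoint, its orthogonal complement $V_n^\perp$ is $Q$-invariant as well (via Lemma \ref{lemma-self-adjoint}). Hence the proposition yields
\begin{align*}
R_{\vartheta}[\widehat{\zeta}_n(Y)] = \sigma^2 \tr(Q \Pi_{V_n}) + \| \Pi_{V_n^{\perp}} \zeta \|^2,
\end{align*}
and Lemma \ref{lemma-risk-mean} gives $R_{\vartheta}[\widehat{\zeta}(Y)] = \sigma^2 \tr(Q\Pi_U)$.

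Next I would evaluate the two traces against the orthonormal system $(f_k)_{k \in \bbn}$. Since $Q f_k = \mu_k f_k$, we have
\begin{align*}
\tr(Q \Pi_{V_n}) = \sum_{k=1}^{n} \la Q f_k, f_k \ra = \sum_{k=1}^{n} \mu_k,
\qquad
\tr(Q \Pi_U) = \sum_{k=1}^{\infty} \mu_k,
\end{align*}
so that $\tr(Q\Pi_{V_n}) - \tr(Q\Pi_U) = -\sum_{k=n+1}^\infty \mu_k$. On the other side, because $\zeta \in U$ admits the Fourier expansion $\zeta = \sum_{k=1}^\infty \zeta_k f_k$ with $\zeta_k = \la \zeta, f_k \ra$, Parseval gives $\| \Pi_{V_n^\perp} \zeta \|^2 = \sum_{k=n+1}^\infty \zeta_k^2$. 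Substituting these two identities into the difference $R_{\vartheta}[\widehat{\zeta}_n(Y)] - R_{\vartheta}[\widehat{\zeta}(Y)]$ produces the claimed expression (using the notation of the statement, where $\zeta_k$ and $\lambda_k$ refer to the coefficients and eigenvalues associated with the basis of $U$).

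Finally, for the convergence to zero, both series $\sum_{k=1}^\infty \zeta_k^2 = \|\zeta\|^2 < \infty$ and $\sum_{k=1}^\infty \mu_k = \tr(Q \Pi_U) \leq \tr(Q) < \infty$ converge absolutely, so their tails tend to zero, and hence so does their difference. There is no real obstacle here; the only place where care is needed is noticing that the $Q$-invariance hypothesis of Proposition \ref{prop-learning-1} is automatic thanks to $V_n$ being the span of eigenvectors of the self-adjoint operator $Q$.
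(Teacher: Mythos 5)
Your proposal is correct and follows essentially the same route as the paper: apply Proposition \ref{prop-learning-1} to $V_n$ together with Lemma \ref{lemma-risk-mean} for $U$, evaluate the trace and the projection norm in the eigenbasis of $Q$ restricted to $U$, and subtract. The only cosmetic point is that your citation of Lemma \ref{lemma-self-adjoint} for the $Q$-invariance of $V_n^{\perp}$ is not quite apt (that lemma assumes $T|_U = \Id|_U$); the invariance of $V_n^{\perp}$ follows directly from the self-adjointness of $Q$ together with the $Q$-invariance of $V_n$.
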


\begin{proof}
Let $n \in \bbn$ be arbitrary. By Proposition \ref{prop-learning-1} we have
\begin{align*}
R_{\vartheta}[\widehat{\zeta}_n(Y)] = \sigma^2 \tr (Q \Pi_{V_n}) + \| \Pi_{V_n^{\perp}} \zeta \|^2 = \sigma^2 \sum_{k=1}^n \lambda_k + \sum_{k=n+1}^{\infty} \zeta_k^2.
\end{align*}
Furthermore, by Lemma \ref{lemma-risk-mean} we have
\begin{align*}
R_{\vartheta} [\widehat{\zeta}(Y)] = \sigma^2 \tr(Q \Pi_U) = \sigma^2 \sum_{k=1}^{\infty} \lambda_k.
\end{align*}
Combining these two identities provides the result.
\end{proof}

\section{Estimation of functionals of the mean}\label{sec-est-mean-2}

In this section we estimate linear functionals of the mean $\zeta$, given the statistical model (\ref{model}). As an immediate consequence of Corollary \ref{cor-proj-Gauss} we obtain:

\begin{proposition}\label{prop-zeta-b}
For each $\vartheta = (\zeta,\sigma) \in \Theta$ and each $b \in H$ we have
\begin{align*}
\bbp_{\vartheta} \circ \langle b,\widehat{\zeta}(Y) \rangle^{-1} = {\rm N}(\langle b,\zeta \rangle, \sigma^2 \langle Q b, \Pi_U b \rangle).
\end{align*}
\end{proposition}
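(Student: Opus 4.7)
The plan is to deduce this directly from Proposition~\ref{prop-zeta} by applying the transformation rule for Gaussian random variables (Proposition~\ref{prop-Gauss-rv} or its specialization Corollary~\ref{cor-proj-Gauss}) to the continuous linear functional $\langle b,\cdot\rangle : H \to \bbr$. Since Proposition~\ref{prop-zeta} already tells us that $\bbp_{\vartheta} \circ \widehat{\zeta}(Y)^{-1} = \N(\zeta,\sigma^2 Q \Pi_U)$, applying the bounded functional $\langle b,\cdot\rangle$ yields a real Gaussian law with mean $\langle b,\zeta\rangle$ and variance $\sigma^2 \langle Q\Pi_U b, b\rangle$, since for a real linear functional $\phi$ with adjoint $b$, the push-forward of $\N(\zeta,C)$ is $\N(\langle b,\zeta\rangle, \langle Cb,b\rangle)$.

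The only remaining step is to rewrite the variance $\sigma^2\langle Q\Pi_U b, b\rangle$ in the symmetric form $\sigma^2\langle Qb, \Pi_U b\rangle$ that appears in the statement. I would do this by decomposing $b = \Pi_U b + \Pi_{U^\perp} b$ in both slots and using that $U$ and $U^\perp$ are both $Q$-invariant. Concretely, since $Q\Pi_U b \in U$ and $\Pi_{U^\perp}b \in U^\perp$, the cross term vanishes and
\begin{align*}
\langle Q\Pi_U b, b\rangle = \langle Q\Pi_U b, \Pi_U b\rangle;
\end{align*}
symmetrically, since $Q\Pi_{U^\perp}b \in U^\perp$ and $\Pi_U b \in U$,
\begin{align*}
\langle Qb, \Pi_U b\rangle = \langle Q\Pi_U b, \Pi_U b\rangle + \langle Q\Pi_{U^\perp} b, \Pi_U b\rangle = \langle Q\Pi_U b, \Pi_U b\rangle,
\end{align*}
so both expressions agree.

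There is no real obstacle here; the proposition is essentially a restatement of Proposition~\ref{prop-zeta} after pairing with $b$, and the mild rewriting of the variance is a direct consequence of the $Q$-invariance of $U$ and $U^\perp$ (which are built into the standing assumptions of Section~\ref{sec-model}). This is why the authors advertise the result as an immediate consequence of Corollary~\ref{cor-proj-Gauss}.
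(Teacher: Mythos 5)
Your proposal is correct and follows essentially the same route as the paper: the paper derives the statement directly from Corollary~\ref{cor-proj-Gauss}(b), which is itself obtained by pushing the Gaussian law forward under $\Pi_U$ and the functional $\langle b,\cdot\rangle$ (Proposition~\ref{prop-Gauss-rv}) and then rewriting the variance via Lemma~\ref{lemma-Q-Pi}, exactly the computation you carry out by hand. No gaps.
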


In particular, for each $b \in H$ the estimator $\langle b,\widehat{\zeta}(Y) \rangle$ is an unbiased estimator for $\langle b,\zeta \rangle$. The following result may be regarded as a generalization of the Gauss-Markov theorem to infinite dimensions. Note that $\langle b,\widehat{\zeta}(Y) \rangle = \la \Pi_U b, Y \ra$ for all $b \in H$.

\begin{theorem}\label{thm-Gauss-Markov}
Let $b \in H$ be arbitrary, and let $c \in H$ such that $\la c,Y \ra$ is an unbiased estimator for $\la b,\zeta \ra$. Then the following statements are true:
\begin{enumerate}
\item We have
\begin{align}\label{GM-1}
R_{\vartheta}[\langle b,\widehat{\zeta}(Y) \rangle] \leq R_{\vartheta}[\langle c,Y \rangle] \quad \text{for all $\vartheta \in \Theta$.}
\end{align}
\item Suppose that
\begin{align}\label{GM-2}
R_{\vartheta}[\langle b,\widehat{\zeta}(Y) \rangle] = R_{\vartheta}[\langle c,Y \rangle] \quad \text{for all $\vartheta \in \Theta$.}
\end{align}
Then we have
\begin{align}\label{GM-3}
\la c,Y \ra = \langle b,\widehat{\zeta}(Y) \rangle \quad \text{$\bbp_{\vartheta}$-almost surely for each $\vartheta \in \Theta$.}
\end{align}
\end{enumerate}
\end{theorem}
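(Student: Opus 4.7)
The plan is to exploit Proposition \ref{prop-zeta-b} (and its analogue for $\la c,Y\ra$) to reduce everything to two real-valued Gaussian inner products and a Pythagoras-style decomposition on $U \oplus U^{\perp}$.

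First, I would extract the structural consequence of unbiasedness. Since $\bbe_{\vartheta}[\la c,Y\ra] = \la c,\zeta\ra$ must agree with $\la b,\zeta\ra$ for every $\zeta \in U$, I get $\la c - b,\zeta \ra = 0$ for all $\zeta \in U$, i.e.\ $c - b \in U^{\perp}$, equivalently $\Pi_U c = \Pi_U b$. Writing $c = \Pi_U b + \Pi_{U^{\perp}} c$ then becomes the basic decomposition driving everything.

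Next I would compute the two risks. Both estimators are unbiased, so by Lemma \ref{lemma-risk-decomposition} the risk equals the variance. Proposition \ref{prop-zeta-b} gives $R_{\vartheta}[\la b,\widehat{\zeta}(Y)\ra] = \sigma^2 \la Qb,\Pi_U b\ra$, and the analogous application of Proposition \ref{prop-Gauss-rv} (distribution of $\la c,Y\ra$ is $\N(\la c,\zeta\ra,\sigma^2\la Qc,c\ra)$) gives $R_{\vartheta}[\la c,Y\ra] = \sigma^2 \la Qc,c\ra$. Plugging in $c = \Pi_U b + \Pi_{U^{\perp}} c$ and using that $U$ and $U^{\perp}$ are both $Q$-invariant, the cross terms $\la Q\Pi_U b,\Pi_{U^{\perp}} c\ra$ and $\la Q\Pi_{U^{\perp}} c,\Pi_U b\ra$ vanish (one factor lies in $U$, the other in $U^{\perp}$), leaving the clean Pythagorean identity
\begin{align*}
\la Qc,c\ra = \la Q\Pi_U b,\Pi_U b\ra + \la Q\Pi_{U^{\perp}}c,\Pi_{U^{\perp}}c\ra.
\end{align*}
Since $Q \in L_1^+(H)$, the second term is nonnegative, which yields (\ref{GM-1}).

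For part (ii), equality (\ref{GM-2}) forces $\la Q\Pi_{U^{\perp}}c,\Pi_{U^{\perp}}c\ra = 0$; since $Q \geq 0$ this implies $Q^{1/2}\Pi_{U^{\perp}}c = 0$ and hence $Q\Pi_{U^{\perp}}c = 0$. Now consider the difference $\la c,Y\ra - \la b,\widehat{\zeta}(Y)\ra = \la c - \Pi_U b,Y\ra = \la \Pi_{U^{\perp}}c,Y\ra$. By Proposition \ref{prop-Gauss-rv}, under $\bbp_{\vartheta}$ this real random variable is Gaussian with mean $\la \Pi_{U^{\perp}}c,\zeta\ra = 0$ (since $\zeta \in U$) and variance $\sigma^2 \la Q\Pi_{U^{\perp}}c,\Pi_{U^{\perp}}c\ra = 0$, so it is $\bbp_{\vartheta}$-almost surely zero, giving (\ref{GM-3}).

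The only point that requires a moment's care is handling the cross terms in the quadratic form $\la Qc,c\ra$: one must be precise that $Q$-invariance of both $U$ and $U^{\perp}$ is exactly what kills them. I do not expect any other real obstacle; everything else is a direct application of results already established in the paper.
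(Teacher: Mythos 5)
Your proposal is correct and follows essentially the same route as the paper's proof: unbiasedness gives $\Pi_U c = \Pi_U b$, the $Q$-invariance of $U$ and $U^{\perp}$ yields the Pythagorean identity $\la Qc,c\ra = \la Q\Pi_U c,\Pi_U c\ra + \la Q\Pi_{U^{\perp}}c,\Pi_{U^{\perp}}c\ra$ (this is Lemma \ref{lemma-Q-Pi} in the paper, which you re-derive), and in the equality case the argument via $Q^{1/2}\Pi_{U^{\perp}}c = 0$ is exactly Lemma \ref{lemma-operator-kernel}, after which the degenerate Gaussian $\la \Pi_{U^{\perp}}c, Y\ra$ vanishes almost surely. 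No gaps.
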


\begin{proof}
Let $\vartheta = (\zeta,\sigma^2) \in \Theta$ be arbitrary. Then we have
\begin{align*}
\langle b,\zeta \rangle = \bbe_{\vartheta}[\langle c,Y \rangle] = \langle c,\zeta \rangle.
\end{align*}
Thus, we have $b-c \in U^{\perp}$, and hence $\Pi_U b = \Pi_U c$. Since $Q$ is a nonnegative operator, by Lemma \ref{lemma-risk-decomposition}, Proposition \ref{prop-zeta-b}, Lemma \ref{lemma-Q-Pi} and Proposition \ref{prop-Gauss-rv} we obtain
\begin{equation}\label{calc}
\begin{aligned}
R_{\vartheta}[\langle b,\widehat{\zeta}(Y) \rangle] = \Var_{\vartheta} [ \langle b,\widehat{\zeta}(Y) \rangle ] &= \sigma^2 \langle Q \Pi_U b,\Pi_U b \rangle = \sigma^2 \langle Q \Pi_U c,\Pi_U c \rangle
\\ &\leq \sigma^2 \langle Q c,c \rangle = \Var_{\vartheta} [ \langle c,Y \rangle ] = R_{\vartheta} [ \langle c,Y \rangle ],
\end{aligned}
\end{equation}
proving (\ref{GM-1}). In order to prove the claimed uniqueness, suppose that (\ref{GM-2}) is fulfilled. The previous calculation (\ref{calc}) shows that $\la Q \Pi_U c, \Pi_U c \ra = \la Qc, c \ra$. Since
\begin{align*}
\la Qc, c \ra = \la Q \Pi_U c, \Pi_U c \ra + \la Q \Pi_{U^{\perp}} c, \Pi_{U^{\perp}} c \ra,
\end{align*}
we obtain $\la Q \Pi_{U^{\perp}} c, \Pi_{U^{\perp}} c \ra = 0$. Hence, by Lemma \ref{lemma-operator-kernel} we have $\Pi_{U^{\perp}} c \in \ker(Q)$. Now, recalling that $\Pi_U b = \Pi_U c$, we obtain
\begin{align*}
c = \Pi_U c + \Pi_{U^{\perp}} c = \Pi_U b + \Pi_{U^{\perp}} c.
\end{align*}
Let $\vartheta = (\zeta,\sigma^2) \in \Theta$ be arbitrary. By Proposition \ref{prop-Gauss-rv} we have
\begin{align*}
\la \Pi_{U^{\perp}} c, Y \ra \sim \N(\la \Pi_{U^{\perp}} c,\zeta \ra, \sigma^2 \la Q \Pi_{U^{\perp}} c, \Pi_{U^{\perp}} c \ra ).
\end{align*}
Since $\Pi_{U^{\perp}} c \in \ker(Q)$, we obtain
\begin{align*}
\la \Pi_{U^{\perp}} c, Y \ra = 0 \quad \text{$\bbp_{\vartheta}$-almost surely,}
\end{align*}
and hence
\begin{align*}
\la c,Y \ra = \la \Pi_U b,Y \ra + \la \Pi_{U^{\perp}} c, Y \ra = \la b,\Pi_U Y \ra = \la b,\widehat{\zeta}(Y) \ra \quad \text{$\bbp_{\vartheta}$-almost surely,}
\end{align*}
proving (\ref{GM-3}).
\end{proof}

\begin{example}\label{ex-Wiener-amplitude}
Consider the statistical model (\ref{model}) with $Q$ as in Example \ref{ex-Wiener} and $U = \lin \{ h_k \}$ for some $k \in \bbn$, where
\begin{align*}
h_k = \sin \bigg( \Big( k - \frac{1}{2} \Big) \pi \bullet \bigg).
\end{align*}
Then the random variable (\ref{Wiener-observation}) is a noisy oscillation perturbed by a Wiener process with unknown variance. Figure \ref{fig-signal} shows a typical observation in case $k=4$.
\begin{figure}[!ht]
 \centering
 \includegraphics[width=0.5\textwidth]{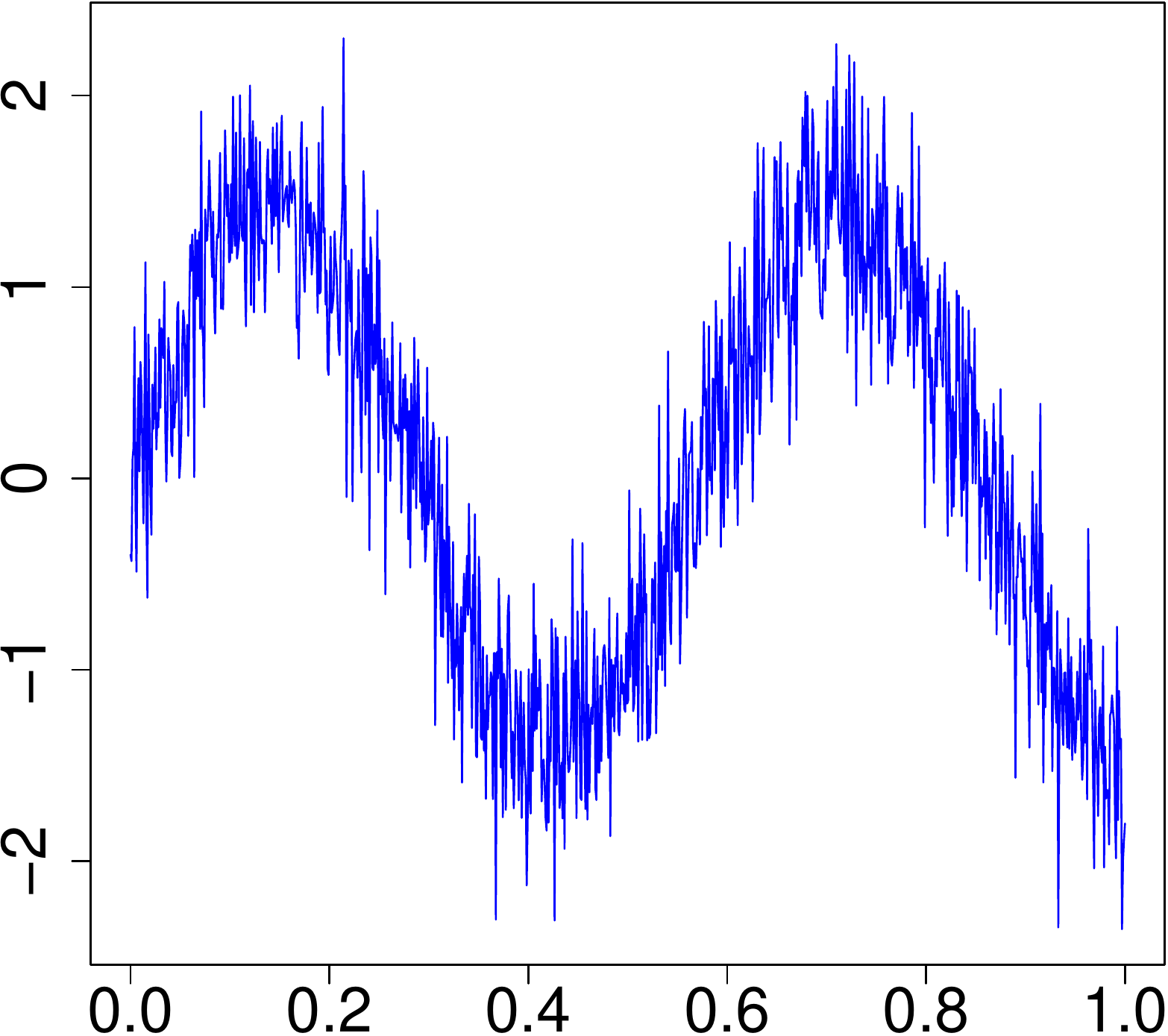}
 \caption{Observation of the random variable $Y$ in case $k=4$.}\label{fig-signal}
\end{figure}
We would like to estimate the amplitude of the oscillation. For this purpose, observe that for each $\zeta \in U$ we have $\zeta = \gamma h_k$ for some $\gamma \in \bbr$. Noting (\ref{Wiener-eigenvectors}), with $b := 2 h_k$ we obtain
\begin{align*}
\la b,\zeta \ra = \la 2 h_k, \gamma h_k \ra = \gamma \la \sqrt{2} h_k, \sqrt{2} h_k \ra = \gamma \la e_k,e_k \ra = \gamma.
\end{align*}
Therefore, according to Theorem \ref{thm-Gauss-Markov} the estimator $\langle b,\widehat{\zeta}(Y) \rangle$ is the unique risk minimizing unbiased estimator for the amplitude $\gamma$.
\end{example}

\section{Confidence intervals with known variance}\label{sec-conf-known}

In this section we construct confidence intervals for the mean $\zeta$ for a simplified statistical model with known variance. More precisely, we fix $\sigma^2 \in (0,\infty)$ and consider
\begin{align}\label{model-0}
\calm_0 = \big( H, \calb(H), \bbp_{\zeta} : \zeta \in U \big),
\end{align}
where the probability measures are given by
\begin{align}\label{family-0}
\bbp_{\zeta} := \N(\zeta,\sigma^2 Q), \quad \zeta \in U.
\end{align}
In the sequel, for each $a \in (0,1)$ we denote by $z_a$ the $a$-quantile of the standard normal distribution ${\rm N}(0,1)$.

\begin{theorem}\label{thm-conf-known}
Let $b \in H$ with $\langle Q b, \Pi_U b \rangle > 0$ and $\alpha \in (0,1)$ be arbitrary. Then the random interval
\begin{align}\label{conf-int-1}
\langle b,\widehat{\zeta}(Y) \rangle \pm z_{1-\frac{\alpha}{2}} \, \sigma \sqrt{\langle Q b, \Pi_U b \rangle}
\end{align}
is an $(1-\alpha)$-confidence interval for $\langle b,\zeta \rangle$.
\end{theorem}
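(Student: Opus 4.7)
The plan is to reduce the statement to a one-dimensional Gaussian confidence-interval argument by first identifying a pivot statistic. By Proposition \ref{prop-zeta-b} applied with variance $\sigma^2$ known, for every $\zeta \in U$ the real-valued random variable $\langle b, \widehat{\zeta}(Y)\rangle$ has distribution $\N(\langle b,\zeta\rangle,\sigma^2\langle Qb,\Pi_U b\rangle)$ under $\bbp_\zeta$. Since $\langle Qb,\Pi_U b\rangle > 0$ by hypothesis, the standardized quantity
\begin{align*}
Z := \frac{\langle b,\widehat{\zeta}(Y)\rangle - \langle b,\zeta\rangle}{\sigma\sqrt{\langle Qb,\Pi_U b\rangle}}
\end{align*}
is well defined and satisfies $\bbp_\zeta \circ Z^{-1} = \N(0,1)$ for every $\zeta \in U$; in particular $Z$ is a pivot statistic in the sense of Remark \ref{rem-pivot}.

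Next I would invoke the defining property of the quantile $z_{1-\alpha/2}$ of $\N(0,1)$, namely that $\bbp_\zeta(|Z| \leq z_{1-\alpha/2}) = 1-\alpha$ for every $\zeta \in U$. Rearranging the inequality $|Z|\leq z_{1-\alpha/2}$ gives
\begin{align*}
\langle b,\widehat{\zeta}(Y)\rangle - z_{1-\alpha/2}\,\sigma\sqrt{\langle Qb,\Pi_U b\rangle} \leq \langle b,\zeta\rangle \leq \langle b,\widehat{\zeta}(Y)\rangle + z_{1-\alpha/2}\,\sigma\sqrt{\langle Qb,\Pi_U b\rangle},
\end{align*}
which is precisely the statement that the random interval \eqref{conf-int-1} contains $\langle b,\zeta\rangle$ with probability $1-\alpha$ under $\bbp_\zeta$.

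There is really no obstacle here, since the Hilbert-space structure is fully absorbed by Proposition \ref{prop-zeta-b}, which projects the estimator onto the scalar $\langle b,\widehat{\zeta}(Y)\rangle$. The only point deserving a brief sentence is why the standardization is legitimate, i.e.\ why $\sigma^2\langle Qb,\Pi_U b\rangle > 0$; this follows directly from the assumption $\langle Qb,\Pi_U b\rangle > 0$ together with $\sigma > 0$, and ensures that $Z$ is genuinely $\N(0,1)$-distributed rather than degenerate. The resulting proof should be just a few lines long.
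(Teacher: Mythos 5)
Your proposal is correct and follows essentially the same route as the paper: both reduce to the scalar pivot $Z$ via Proposition \ref{prop-zeta-b} and then apply the symmetry of the standard normal distribution to obtain coverage probability $1-\alpha$. The only cosmetic difference is that the paper spells out the step $\Phi(c)-\Phi(-c)=2\Phi(c)-1$ explicitly, whereas you cite the quantile property directly.
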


\begin{proof}
Let $\zeta \in U$ be arbitrary. By Proposition \ref{prop-zeta-b} we have
\begin{align*}
\bbp_{\zeta} \circ \bigg( \frac{\langle b, \widehat{\zeta}(Y) \rangle - \langle b,\zeta \rangle}{\sigma \sqrt{\langle Q b, \Pi_U b \rangle}} \bigg)^{-1} = {\rm N}(0,1).
\end{align*}
Let $\Phi : \bbr \to [0,1]$ be the distribution function of the standard normal distribution ${\rm N}(0,1)$, and set $c := z_{1-\frac{\alpha}{2}}$. Then we obtain
\begin{align*}
&\bbp_{\zeta} \Big( \langle b,\zeta \rangle \in \langle b,\widehat{\zeta}(Y) \rangle \pm c \sigma \sqrt{\langle Q b, \Pi_U b \rangle} \Big)
= \bbp_{\zeta} \bigg( \bigg| \frac{\langle b, \widehat{\zeta}(Y) \rangle - \langle b,\zeta \rangle}{\sigma \sqrt{\langle Q b, \Pi_U b \rangle}} \bigg| \leq c \bigg)
\\ &= \Phi(c) - \Phi(-c) = 2 \Phi(c) - 1 = 2 \bigg( 1 - \frac{\alpha}{2} \bigg) - 1 = 1 - \alpha,
\end{align*}
completing the proof.
\end{proof}

\begin{remark}
If $Q > 0$, then by Lemma \ref{lemma-Q-Pi} we have $\langle Q b, \Pi_U b \rangle > 0$  for all $b \in H \setminus U^{\perp}$.
\end{remark}

\begin{example}\label{ex-Wiener-conf}
Consider the statistical model from Example \ref{ex-Wiener-amplitude}, where we have estimated the amplitude of a noisy oscillation. Taking into account (\ref{Wiener-eigenvalues}) and (\ref{Wiener-eigenvectors}) and noting that $b = \sqrt{2} e_k$, we obtain
\begin{align*}
\la Qb,\Pi_U b \ra = \la Qb,b \ra = 2 \la Qe_k,e_k \ra = 2 \lambda_k = \frac{2}{(k - \frac{1}{2})^2 \pi^2},
\end{align*}
which provides the confidence interval (\ref{conf-int-1}) constructed in Theorem \ref{thm-conf-known}.
\end{example}

\section{Estimation of the variance}\label{sec-est-var}

In this section we estimate the variance $\sigma^2$ of the statistical model (\ref{model}). For this purpose, we are looking for quadratic estimators of the type $\| S(Y) \|^2$ with a linear operator $S \in L(H)$. The following auxiliary result is a consequence of Proposition \ref{prop-Gauss-rv}.

\begin{lemma}\label{lemma-E-square}
Let $S \in L(H)$ be a linear operator with $U \subset \ker(S)$. Then for all $\vartheta = (\zeta,\sigma^2) \in \Theta$ we have
\begin{align*}
\bbe_{\vartheta} \big[ \| S(Y) \|^2 \big] = \sigma^2 \cdot \tr (S Q S^*).
\end{align*}
\end{lemma}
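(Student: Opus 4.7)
The plan is to reduce everything to a direct application of Proposition \ref{prop-Gauss-rv} after exploiting the kernel hypothesis to kill the mean.

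First I would observe that since $\zeta \in U$ and $U \subset \ker(S)$, we have $S(\zeta) = 0$. Applying Proposition \ref{prop-Gauss-rv} to the linear transformation $S$ of the Gaussian $Y \sim \N(\zeta, \sigma^2 Q)$ then yields
\begin{align*}
\bbp_{\vartheta} \circ S(Y)^{-1} = \N(S \zeta, \sigma^2 S Q S^*) = \N(0, \sigma^2 S Q S^*).
\end{align*}
In particular, $S(Y)$ is a centered $H$-valued Gaussian random variable with covariance operator $\sigma^2 S Q S^* \in L_1^+(H)$.

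Next I would use the standard identity $\bbe[\| X \|^2] = \| m \|^2 + \tr(C)$ for an $H$-valued Gaussian $X \sim \N(m,C)$, which is again supplied by Proposition \ref{prop-Gauss-rv}. Applied to $S(Y)$, whose mean vanishes, this gives
\begin{align*}
\bbe_{\vartheta}\big[ \| S(Y) \|^2 \big] = \tr(\sigma^2 S Q S^*) = \sigma^2 \tr(S Q S^*),
\end{align*}
which is exactly the claim.

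There is no real obstacle here: the argument is essentially a two-line bookkeeping exercise, and the only thing one has to notice is that the hypothesis $U \subset \ker(S)$ is used solely to eliminate the mean $S\zeta$, so that the expected squared norm collapses to the trace of the transformed covariance operator.
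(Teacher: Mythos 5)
Your proof is correct and is precisely the argument the paper intends: the lemma is stated there as a direct consequence of Proposition \ref{prop-Gauss-rv}, with the hypothesis $U \subset \ker(S)$ serving only to make $S\zeta = 0$ so that $\bbe_{\vartheta}[\|S(Y)\|^2]$ reduces to the trace of the covariance $\sigma^2 S Q S^*$. No differences worth noting.
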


Together with Lemma \ref{lemma-independence-ker}, Lemma \ref{lemma-risk-decomposition} and Corollary \ref{cor-expectation-norm-square} we arrive at the following result.

\begin{proposition}
Let $T \in L(H)$ be a linear operator with $U \subset \ker(T)$ and $\tr(T Q T^*) > 0$. We define $S \in L(H)$ as
\begin{align}\label{S-def}
S := \frac{T}{\sqrt{\tr(T Q T^*)}}.
\end{align}
Then the following statements are true:
\begin{enumerate}
\item[(a)] The estimator $\| S(Y) \|^2$ is an unbiased estimator for $\sigma^2$.

\item[(b)] For each $\vartheta \in \Theta$ the estimators $\widehat{\zeta}(Y)$ and $\| S(Y) \|^2$ are $\bbp_{\vartheta}$-independent.

\item[(c)] For each $\vartheta = (\zeta,\sigma^2) \in \Theta$ we have
\begin{align*}
R_{\vartheta} \big[ \| S(Y) \|^2 \big] = 2 \bigg( \sigma^2 \frac{\| T Q T^* \|_{L_2}}{\| T Q T^* \|_{L_1}} \bigg)^2.
\end{align*}
In particular, for each $\vartheta = (\zeta,\sigma^2) \in \Theta$ we have
\begin{align*}
R_{\vartheta} \big[ \| S(Y) \|^2 \big] \leq 2 \sigma^4.
\end{align*}

\end{enumerate}
\end{proposition}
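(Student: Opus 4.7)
The plan is to dispatch the three parts in order, leaning on the three auxiliary results that the paper has cited immediately before the statement.

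For (a), I would simply compute $\tr(SQS^*)$. By definition $S = T/\sqrt{\tr(TQT^*)}$, so $SQS^* = TQT^*/\tr(TQT^*)$, whence $\tr(SQS^*) = 1$. Since $U \subset \ker(T) = \ker(S)$, Lemma~\ref{lemma-E-square} applies and yields $\bbe_{\vartheta}[\|S(Y)\|^2] = \sigma^2 \tr(SQS^*) = \sigma^2$.

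For (b), I would appeal directly to Lemma~\ref{lemma-independence-ker}. Writing $\widehat{\zeta}(Y) = \Pi_U Y$ and using $U \subset \ker(S)$ gives $S(Y) = S(\Pi_{U^{\perp}} Y)$; the joint Gaussianity of $(\Pi_U Y, \Pi_{U^{\perp}} Y)$ together with the $Q$-invariance of $U$ (which forces $\Pi_U Q \Pi_{U^{\perp}} = 0$) produces the required independence in the standard way, and the lemma packages exactly this.

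For (c), the key observation is that $\zeta \in U \subset \ker(S)$ collapses the estimator to pure noise: $S(Y) = \sigma S(\epsilon)$ with $\epsilon \sim \N(0,Q)$, so $\|S(Y)\|^2 = \sigma^2 \|S(\epsilon)\|^2$. By part (a) together with Lemma~\ref{lemma-risk-decomposition} the risk equals the variance, and Corollary~\ref{cor-expectation-norm-square} supplies $\Var[\|S(\epsilon)\|^2] = 2\|SQS^*\|_{L_2}^2$. Substituting $SQS^* = TQT^*/\|TQT^*\|_{L_1}$ (using that $TQT^* \in L_1^+(H)$, whence $\tr(TQT^*) = \|TQT^*\|_{L_1}$) produces the displayed formula.

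The final inequality $R_{\vartheta}[\|S(Y)\|^2] \leq 2\sigma^4$ then reduces to the bound $\|A\|_{L_2} \leq \|A\|_{L_1}$ for $A := TQT^* \in L_1^+(H)$; this in turn follows because the singular values of a nonnegative self-adjoint trace-class operator coincide with its eigenvalues $(\mu_k)_{k} \subset [0,\infty)$, and $\sum_k \mu_k^2 \leq \bigl(\sum_k \mu_k\bigr)^2$. This comparison of the Hilbert--Schmidt and trace norms is the only step that requires a moment's thought; everything else is a direct assembly of the previously established lemmas.
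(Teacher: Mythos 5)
Your proposal is correct and follows essentially the same route as the paper, which states this proposition without a written proof and simply cites Lemma~\ref{lemma-E-square}, Lemma~\ref{lemma-independence-ker}, Lemma~\ref{lemma-risk-decomposition} and Corollary~\ref{cor-expectation-norm-square} as the ingredients; your assembly of these (normalizing so that $\tr(SQS^*)=1$, applying the independence lemma with the subspaces $U$ and $U^{\perp}$, and using $S\zeta=0$ together with $\|A\|_{L_2}\leq\|A\|_{L_1}$ for $A=TQT^*\in L_1^+(H)$) is exactly the intended argument.
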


Now, assuming that $Q(\Pi_H - \Pi_U) \neq 0$ it is natural to choose the linear operator $T := \Pi_H - \Pi_U$. Let $S \in L(H)$ be given by (\ref{S-def}), and set
\begin{align*}
\widehat{s}^2(Y) := \| S(Y) \|^2.
\end{align*}
Then our estimator for the variance $\sigma^2$ has the representation
\begin{align}\label{est-var}
\widehat{s}^2(Y) = \frac{\| Y - \widehat{\zeta}(Y) \|^2}{\tr(Q (\Pi_H - \Pi_U))}. 
\end{align}

\begin{example}
Consider the statistical model from Example \ref{ex-Wiener-amplitude}, where we have estimated the amplitude of a noisy oscillation. Taking into account (\ref{Wiener-eigenvalues}) and (\ref{Wiener-trace}), the estimator (\ref{est-var}) for the variance is given by
\begin{align*}
\widehat{s}^2(Y) = \bigg( \frac{1}{2} - \frac{1}{(k-\frac{1}{2})^2 \pi^2} \bigg)^{-1} \| Y - \widehat{\zeta}(Y) \|^2.
\end{align*}
\end{example}

\section{Confidence intervals with unknown variance}\label{sec-conf-unknown}

In this section we construct confidence intervals for the mean $\zeta$ using the statistical model (\ref{model}), where the variance $\sigma^2$ is unknown. We assume $Q(\Pi_H - \Pi_U) \neq 0$ and introduce the quantities $\tau,\lambda > 0$ and $n \in \bbn$ as
\begin{align}\label{conf-def-1}
\tau &:= \tr(Q(\Pi_H - \Pi_U)),
\\ \label{conf-def-2} \lambda &:= \| Q(\Pi_H - \Pi_U) \|,
\\ \label{conf-def-3} n &:= \dim \ker ( \lambda - Q(\Pi_H - \Pi_U) ).
\end{align}
For $a \in (0,1)$ we denote by $t_{n, a}$ the $a$-quantile of the Student's $t$-distribution $t_n$.

\begin{theorem}\label{thm-conf-unknown}
Let $b \in H$ with $\langle Q b,\Pi_U b \rangle > 0$ and $\alpha \in (0,1)$ be arbitrary. Then the random interval
\begin{align}\label{conf-interval}
\langle b,\widehat{\zeta}(Y) \rangle \pm \sqrt{\frac{\tau}{\lambda n}} \, t_{n, 1-\frac{\alpha}{2}} \, \widehat{s}(Y) \sqrt{\langle Q b,\Pi_U b \rangle}
\end{align}
is an $(1-\alpha)$-confidence interval for $\langle b,\zeta \rangle$.
\end{theorem}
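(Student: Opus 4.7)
The plan is to construct a $t$-distributed pivot and invert it, paralleling the known-variance argument of Theorem~\ref{thm-conf-known} but replacing the normal quantile by the Student quantile. Fix $\vartheta = (\zeta,\sigma) \in \Theta$. Proposition~\ref{prop-zeta-b} already supplies the normal piece: the statistic
$$Z := \frac{\langle b,\widehat{\zeta}(Y)\rangle - \langle b,\zeta\rangle}{\sigma\sqrt{\langle Qb,\Pi_U b\rangle}}$$
is $\N(0,1)$ under $\bbp_\vartheta$. What remains is to extract from $\widehat{s}^2(Y)$ a $\chi^2_n$-distributed variable that is independent of $Z$, and then to bound.

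The main obstacle, flagged already in the introduction, is that $\| Y - \widehat{\zeta}(Y) \|^2/\sigma^2$ is \emph{not} $\chi^2$-distributed: it is a weighted sum of independent $\chi^2_1$'s with weights equal to the eigenvalues of $Q(\Pi_H - \Pi_U)$. My remedy is to project the residual onto the top eigenspace. Set $W := (Y-\widehat{\zeta}(Y))/\sigma$, which by Proposition~\ref{prop-Gauss-rv} is centered Gaussian with covariance $Q(\Pi_H - \Pi_U)$, and let $P$ denote the orthogonal projection onto the $n$-dimensional subspace $\ker(\lambda - Q(\Pi_H - \Pi_U))$. Then $PW \sim \N(0,\lambda P)$, so $\|PW\|^2/\lambda \sim \chi^2_n$, and this is precisely where the constants $\lambda$ and $n$ from (\ref{conf-def-2})--(\ref{conf-def-3}) enter the interval.

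For independence, I would use the fact that $\widehat{\zeta}(Y) = \Pi_U Y$ and $PW$ is a linear function of $(\Pi_H - \Pi_U)Y$; since $\Pi_U$ and $\Pi_H - \Pi_U$ are orthogonal projections and $U, U^\perp$ are both $Q$-invariant, Lemma~\ref{lemma-independence-ker} yields that $\widehat{\zeta}(Y)$ and $(\Pi_H - \Pi_U)Y$ are $\bbp_\vartheta$-independent. In particular $Z$ is independent of $\|PW\|^2$, and therefore
$$T := \frac{Z}{\sqrt{\|PW\|^2/(\lambda n)}} \sim t_n \quad \text{under } \bbp_\vartheta,$$
so $\bbp_\vartheta(|T| \leq t_{n,1-\alpha/2}) = 1-\alpha$.

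Finally, I convert this $t$-statement into the announced interval. Since $P$ is an orthogonal projection, $\|PW\|^2 \leq \|W\|^2 = \|Y-\widehat{\zeta}(Y)\|^2/\sigma^2 = \tau\widehat{s}^2(Y)/\sigma^2$ by (\ref{est-var}). Hence $\{|T| \leq t_{n,1-\alpha/2}\}$ is contained in
$$\bigg\{\, |Z| \leq t_{n,1-\alpha/2}\,\frac{\widehat{s}(Y)}{\sigma}\sqrt{\frac{\tau}{\lambda n}}\,\bigg\},$$
and multiplying through by $\sigma\sqrt{\langle Qb,\Pi_U b\rangle}$ shows this last event coincides with $\langle b,\zeta\rangle$ lying in (\ref{conf-interval}). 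Taking $\bbp_\vartheta$-probabilities gives coverage at least $1-\alpha$, which is what is required of a $(1-\alpha)$-confidence interval. The conceptual content is entirely in the eigenspace-projection step; the rest is bookkeeping and the standard $t$-inversion.
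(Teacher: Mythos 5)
Your proposal is correct and takes essentially the same route as the paper: the projection $P$ onto the top eigenspace $\ker(\lambda - Q(\Pi_H-\Pi_U))$ is exactly the operator $S$ that the paper obtains from Proposition \ref{prop-est-dist-subspaces}, and your bound $\|PW\|^2 \le \|W\|^2 = \tau\widehat{s}^2(Y)/\sigma^2$ is the paper's inequality $\widehat{t}^2 \le \widehat{s}^2$. You have merely inlined the content of Propositions \ref{prop-est-dist-subspaces} and \ref{prop-Pearson} rather than citing them.
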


\begin{proof}
Let $\vartheta = (\zeta, \sigma) \in \Theta$ be arbitrary. By Proposition \ref{prop-zeta-b} we have we have $\bbp_{\vartheta} \circ Z^{-1} = {\rm N}(0,1)$, where $Z$ denotes the random variable
\begin{align*}
Z := \frac{\langle b,\widehat{\zeta}(Y) \rangle - \langle b,\zeta \rangle}{\sigma \sqrt{\langle Q b, \Pi_U b \rangle}}.
\end{align*}
Taking into account Remark \ref{rem-pivot}, by Proposition \ref{prop-est-dist-subspaces} there exists a linear operator $S \in L(H)$ with $U \subset \ker(S)$ such that the following conditions are satisfied:
\begin{itemize}
\item $\widehat{\zeta}((Y-\zeta)/\sigma)$ and $S(Y/\sigma)$ are $\bbp_{\vartheta}$-independent.

\item We have $\| S(Y / \sigma) \| \leq \| \Pi_{U^{\perp}} (Y / \sigma) \|$.

\item We have $\bbp_{\vartheta} \circ ( \| S(Y/\sigma) \|^2 )^{-1} = \Gamma(\frac{n}{2},\frac{1}{2 \lambda})$.
\end{itemize}
We define $\widehat{t}^2 : H \to \bbr_+$ as
\begin{align*}
\widehat{t}^2(y) := \frac{\| Sy \|^2}{\tau}, \quad y \in H.
\end{align*}
Then we have $\bbp_{\vartheta} \circ ( \widehat{t}^2(Y / \sigma) )^{-1} = \Gamma(\frac{n}{2},\frac{\tau}{2 \lambda})$ and
\begin{align}\label{est-s-t}
\frac{\widehat{s}^2(Y)}{\sigma^2} = \widehat{s}^2 \bigg( \frac{Y}{\sigma} \bigg) = \frac{1}{\tau} \| \Pi_{U^{\perp}}(Y / \sigma) \|^2 \geq \frac{1}{\tau} \| S(Y / \sigma) \|^2 = \widehat{t}^2 \bigg( \frac{Y}{\sigma} \bigg) = \frac{\widehat{t}^2(Y)}{\sigma^2}.
\end{align}
By Proposition \ref{prop-Pearson} there is a random variable $W$ with $\bbp_{\vartheta} \circ W^{-1} = t_n$ such that
\begin{align*}
\frac{\langle b,\widehat{\zeta}(Y) \rangle - \langle b,\zeta \rangle}{\widehat{t}(Y) \sqrt{\langle Q b, \Pi_U b \rangle}} = \frac{Z}{\sqrt{\widehat{t}^2(Y / \sigma)}} = \sqrt{\frac{\tau}{\lambda n}} W.
\end{align*}
We denote by $F : \bbr \to [0,1]$ the distribution function of the Student's $t$-distribution $t_n$. Furthermore, we set 
\begin{align*}
c := \sqrt{\frac{\tau}{\lambda n}} \cdot t_{n, 1-\frac{\alpha}{2}} \quad \text{and} \quad d := t_{n, 1-\frac{\alpha}{2}}.
\end{align*}
Since the distribution $t_n$ has a symmetric density, by (\ref{est-s-t}) we obtain
\begin{align*}
&\bbp_{\vartheta} \Big( \langle b,\zeta \rangle \in \langle b,\widehat{\zeta}(Y) \rangle \pm c \, \widehat{s}(Y) \sqrt{\langle Q b,\Pi_U b \rangle} \Big) 
= \bbp_{\vartheta} \bigg( \bigg| \frac{\langle b,\widehat{\zeta}(Y) \rangle - \langle b,\zeta \rangle}{\widehat{s}(Y) \sqrt{\langle Q b,\Pi_U b \rangle}} \bigg| \leq c \bigg) 
\\ &\geq \bbp_{\vartheta} \bigg( \bigg| \frac{\langle b,\widehat{\zeta}(Y) \rangle - \langle b,\zeta \rangle}{\widehat{t}(Y) \sqrt{\langle Q b,\Pi_U b \rangle}} \bigg| \leq c \bigg) = \bbp_{\vartheta} \bigg( \bigg| \sqrt{\frac{ \tau}{\lambda n}} W \bigg| \leq c \bigg) = \bbp_{\vartheta} ( |W| \leq d )
\\ &= F(d) - F(-d) = 2 F(d) - 1 = 2 \bigg( 1 - \frac{\alpha}{2} \bigg) - 1 = 1 - \alpha,
\end{align*}
completing the proof.
\end{proof}

\begin{example}
Consider the statistical model from Example \ref{ex-Wiener-amplitude}, where we have estimated the amplitude of a noisy oscillation. Here we assume that $k \geq 2$. Then, noting (\ref{Wiener-eigenvalues}) and (\ref{Wiener-trace}) we obtain
\begin{align*}
\tau &= \frac{1}{2} - \frac{1}{(k-\frac{1}{2})^2 \pi^2},
\\ \lambda &= \frac{4}{\pi^2},
\\ n &= 1.
\end{align*}
Furthermore, in Example \ref{ex-Wiener-conf} we have calculated
\begin{align*}
\la Qb,\Pi_U b \ra = \frac{2}{(k-\frac{1}{2})^2 \pi^2},
\end{align*}
which provides the confidence interval (\ref{conf-interval}) constructed in Theorem \ref{thm-conf-unknown}.
\end{example}

\begin{remark}
Consider the well-known finite dimensional situation $H = \bbr^d$ and $Q = \Id$. With $e := \dim(U)$ the quantities (\ref{conf-def-1})--(\ref{conf-def-3}) are given by $\tau = d-e$, $\lambda = 1$ and $n = d-e$. Therefore, in this particular situation the confidence interval (\ref{conf-interval}) is given by the well-known expression
\begin{align*}
\langle b,\widehat{\zeta}(Y) \rangle \pm t_{d-e, 1-\frac{\alpha}{2}} \, \widehat{s}(Y) \| \Pi_U b \|.
\end{align*}
\end{remark}

\section{Hypothesis testing}\label{sec-test}

In this section we perform hypothesis testing for the test problem that the mean $\zeta$ belongs to a smaller subspace. More precisely, let $U_0 \subset U$ be a closed subspace with $U_0 \neq \{ 0 \}$ and $U_0 \neq U$. Consider the test problem
\begin{align}\label{test-problem}
H_0 : \{ \zeta \in U_0 \} \quad \text{versus} \quad H_1 : \{ \zeta \notin U_0 \}.
\end{align}
Introducing $\Theta_0 := U_0 \times (0,\infty)$, the test problem can be written as
\begin{align*}
H_0 : \{ \vartheta \in \Theta_0 \} \quad \text{versus} \quad H_1 : \{ \vartheta \notin \Theta_0 \}.
\end{align*}
We assume that $U_0$ and $U_0^{\perp} \cap U$ are $Q$-invariant. Furthermore, we assume that $Q(\Pi_H - \Pi_U), Q(\Pi_U - \Pi_{U_0}) \neq 0$, and that the range of $Q(\Pi_U - \Pi_{U_0})$ is finite dimensional. We define $\lambda,\mu > 0$ and $n,m \in \bbn$ as
\begin{align}\label{test-def-1}
\lambda &:= \| Q (\Pi_H - \Pi_U) \|,
\\ \label{test-def-2} \mu &:= \| Q (\Pi_U - \Pi_{U_0}) \|,
\\ \label{test-def-3} n &:= \dim \ker ( \lambda - Q (\Pi_H - \Pi_U) ),
\\ \label{test-def-4} m &:= \dim \ran ( Q(\Pi_U - \Pi_{U_0}) ).
\end{align}
For $a \in (0,1)$ we denote by ${\rm F}_{m,n,a}$ the $a$-quantile of the Fisher distribution ${\rm F}_{m,n}$.

\begin{theorem}\label{thm-test}
For each $\alpha \in (0,1)$ the statistical test
\begin{align}\label{test}
\delta(Y) := \bbI \bigg\{ \frac{n \lambda}{m \mu} \frac{\| \Pi_U(Y) - \Pi_{U_0}(Y) \|^2}{\| \Pi_H(Y) - \Pi_U(Y) \|^2} \geq {\rm F}_{m,n,1-\alpha} \bigg\}
\end{align}
for the test problem (\ref{test-problem}) has level of significance $\alpha$.
\end{theorem}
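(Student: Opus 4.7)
The strategy is to reduce the test statistic under $H_0$ to a pivot in the Gaussian noise $\epsilon \sim \N(0,Q)$ and then to dominate it pathwise by an $\mathrm{F}_{m,n}$-distributed random variable; the level bound will then follow at once from the definition of $\mathrm{F}_{m,n,1-\alpha}$. First I fix $\vartheta = (\zeta,\sigma) \in \Theta_0$ and write $Y = \zeta + \sigma\epsilon$. Because $\zeta \in U_0 \subset U$ we have $\Pi_U\zeta = \Pi_{U_0}\zeta = \zeta$, so $\Pi_U(Y) - \Pi_{U_0}(Y) = \sigma(\Pi_U - \Pi_{U_0})\epsilon$ and $\Pi_H(Y) - \Pi_U(Y) = \sigma(\Pi_H - \Pi_U)\epsilon$. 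The factors $\sigma^2$ cancel, so it suffices to control the $\vartheta$-independent ratio
\begin{align*}
R := \frac{n\lambda}{m\mu}\cdot\frac{\|(\Pi_U - \Pi_{U_0})\epsilon\|^2}{\|(\Pi_H - \Pi_U)\epsilon\|^2}.
\end{align*}

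For the denominator I would argue exactly as in the proof of Theorem \ref{thm-conf-unknown}: Proposition \ref{prop-est-dist-subspaces}, applied to the $Q$-invariant decomposition $H = U \oplus U^{\perp}$, delivers a linear operator $S \in L(H)$ with $U \subset \ker(S)$, satisfying $\|S\epsilon\|^2 \leq \|(\Pi_H - \Pi_U)\epsilon\|^2$ pathwise and $\|S\epsilon\|^2/\lambda \sim \chi_n^2$. For the numerator I use $\Pi_U - \Pi_{U_0} = \Pi_{U_0^{\perp}\cap U}$ (valid since $U_0 \subset U$), the $Q$-invariance of $U_0^{\perp}\cap U$, and the hypothesis that $\ran(Q(\Pi_U - \Pi_{U_0}))$ is $m$-dimensional: diagonalising $Q|_{U_0^{\perp}\cap U}$ on its range yields orthonormal eigenvectors $g_1,\dots,g_m$ with positive eigenvalues $\mu_1,\dots,\mu_m \in (0,\mu]$, so that setting $Z_i := \mu_i^{-1/2}\la\epsilon,g_i\ra$ the identity $\la Qg_i,g_j\ra = \mu_i\delta_{ij}$ shows that $(Z_1,\dots,Z_m)$ is i.i.d.\ standard normal, whence
\begin{align*}
\|(\Pi_U - \Pi_{U_0})\epsilon\|^2 = \sum_{i=1}^m \mu_i Z_i^2 \leq \mu\sum_{i=1}^m Z_i^2, \qquad \sum_{i=1}^m Z_i^2 \sim \chi_m^2,
\end{align*}
where the first inequality holds pathwise.

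Independence of $(Z_1,\dots,Z_m)$ and $\|S\epsilon\|^2$ follows by a direct covariance check on the joint Gaussian vector $(\la\epsilon,g_1\ra,\dots,\la\epsilon,g_m\ra, S\epsilon)$: for every $h \in H$ and every $i$ we have $\la Qg_i, S^*h\ra = \mu_i\la Sg_i,h\ra = 0$ because $g_i \in U \subset \ker(S)$. Chaining the two pathwise inequalities gives
\begin{align*}
R \leq \frac{(\sum_{i=1}^m Z_i^2)/m}{(\|S\epsilon\|^2/\lambda)/n},
\end{align*}
whose right-hand side follows the Fisher distribution $\mathrm{F}_{m,n}$ by the independence of the $\chi^2$-variables in numerator and denominator. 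Hence $\bbp_{\vartheta}(\delta(Y) = 1) \leq \bbp(\mathrm{F}_{m,n} \geq \mathrm{F}_{m,n,1-\alpha}) = \alpha$.

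The main delicate point is that the nonzero eigenvalues $\mu_i$ of $Q|_{U_0^{\perp}\cap U}$ need not all equal $\mu$; what saves the argument is the pathwise estimate $\sum\mu_i Z_i^2 \leq \mu\sum Z_i^2$, which aligns exactly with the one-sided pathwise bound on the denominator supplied by Proposition \ref{prop-est-dist-subspaces} and thereby produces the required stochastic domination by $\mathrm{F}_{m,n}$.
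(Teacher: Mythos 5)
Your proof is correct and follows essentially the same route as the paper: under $H_0$ the numerator is dominated pathwise by $\mu$ times a $\chi^2_m$ variable and the denominator is bounded below by $\lambda$ times an independent $\chi^2_n$ variable, yielding stochastic domination of the test statistic by an $\mathrm{F}_{m,n}$-distributed random variable and hence the level bound. The only differences are presentational: you inline the construction that the paper delegates to Proposition \ref{prop-est-dist-subspaces}(b) and to Proposition \ref{prop-F-distribution}, and you verify independence by a direct covariance check rather than via Lemma \ref{lemma-independence-ker}, but the decomposition and the key inequality $\mu_i \le \mu$ are identical.
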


\begin{proof}
Let $\vartheta = (\zeta,\sigma^2) \in \Theta_0$ be arbitrary. Taking into account Remark \ref{rem-pivot}, by Proposition \ref{prop-est-dist-subspaces} there exist linear operators $S,T \in L(H)$ with $U \subset \ker(S)$ and $U_0 \oplus U^{\perp} \subset \ker(T)$ such that the following conditions are satisfied:
\begin{itemize}
\item $S(Y/\sigma)$ and $T(Y/\sigma)$ are $\bbp_{\vartheta}$-independent.

\item We have $\| S(Y/\sigma) \| \leq \| \Pi_{U^{\perp}}(Y / \sigma) \|$ and $\| \Pi_{U_0^{\perp} \cap U}(Y/\sigma) \| \leq \| T(Y / \sigma) \|$.

\item  We have the distributions
\begin{align*}
\bbp_{\vartheta} \circ \big( \| S(Y/\sigma) \|^2 \big)^{-1} = \Gamma \bigg( \frac{n}{2},\frac{1}{2 \lambda} \bigg) \quad \text{and} \quad \bbp_{\vartheta} \circ \big( \| T(Y/\sigma) \|^2 \big)^{-1} = \Gamma \bigg( \frac{m}{2},\frac{1}{2 \mu} \bigg).
\end{align*}
\end{itemize}
Therefore, we obtain
\begin{align*}
\frac{\| \Pi_U(Y) - \Pi_{U_0}(Y) \|}{\| \Pi_H(Y) - \Pi_U(Y) \|} = \frac{\| \Pi_{U_0^{\perp} \cap U}(Y / \sigma) \|}{\| \Pi_{U^{\perp}}(Y / \sigma) \|} \leq \frac{\| T(Y / \sigma) \|}{\| S(Y / \sigma) \|}.
\end{align*}
Furthermore, by Proposition \ref{prop-F-distribution} there is a random variable $Z$ with $\bbp_{\vartheta} \circ Z^{-1} = {\rm F}_{m,n}$ such that
\begin{align*}
\frac{\| T(Y / \sigma) \|^2}{\| S(Y / \sigma) \|^2} = \frac{m \mu}{n \lambda} Z.
\end{align*}
Therefore, we obtain
\begin{align*}
\bbe_{\vartheta}[\delta(Y)] &= \bbp_{\vartheta} \bigg( \frac{n \lambda}{m \mu} \frac{\| \Pi_U(Y) - \Pi_{U_0}(Y) \|^2}{\| \Pi_H(Y) - \Pi_U(Y) \|^2} \geq {\rm F}_{m,n,1-\alpha} \bigg)
\\ &\leq \bbp_{\vartheta} \bigg( \frac{n \lambda}{m \mu} \frac{\| T(Y/\sigma) \|^2}{\| S(Y/\sigma) \|^2} \geq {\rm F}_{m,n,1-\alpha} \bigg)
\\ &= \bbp_{\vartheta} ( Y \geq {\rm F}_{m,n,1-\alpha} ) = \alpha,
\end{align*}
completing the proof.
\end{proof}

\begin{remark}
In view of the quotient appearing in (\ref{test}), note the Pythagorean identity
\begin{align*}
\| \Pi_H(Y) - \Pi_{U_0}(Y) \|^2 = \| \Pi_H(Y) - \Pi_U(Y) \|^2 + \| \Pi_U(Y) - \Pi_{U_0}(Y) \|^2.
\end{align*}
This is a consequence of Proposition \ref{prop-dist-proj}.
\end{remark}

\begin{example}
Consider the statistical model from Example \ref{ex-Wiener-amplitude}, where we have estimated the amplitude of a noisy oscillation. Slightly different than in the previous examples, we assume that $U = \lin \{ e_k,e_{k+1},e_{k+2} \}$ and $U_0 = \lin \{ e_k \}$ for some $k \geq 2$. Then, noting (\ref{Wiener-eigenvalues}) we obtain
\begin{align*}
\lambda &= \frac{4}{\pi^2},
\\ \mu &= \frac{1}{(k+\frac{1}{2})^2 \pi^2},
\\ n &= 1,
\\ m &= 2,
\end{align*}
which provides the statistical test (\ref{test}) constructed in Theorem \ref{thm-test}.
\end{example}

\begin{remark}
Consider the well-known finite dimensional situation $H = \bbr^d$ and $Q = \Id$. With $e := \dim(U)$ and $f := \dim(U_0)$ the quantities (\ref{test-def-1})--(\ref{test-def-4}) are given by $\lambda = \mu = 1$, $n = d-e$ and $m = e-f$. Therefore, in this particular situation the statistical test (\ref{test}) is given by the well-known expression
\begin{align*}
\delta(Y) = \bbI \bigg\{ \frac{d-e}{e-f} \frac{\| \Pi_U(Y) - \Pi_{U_0}(Y) \|^2}{\| \Pi_H(Y) - \Pi_U(Y) \|^2} \geq {\rm F}_{e-f,d-e,1-\alpha} \bigg\}.
\end{align*}
\end{remark}

\section{Linear regression in infinite dimensions}\label{sec-learning}

In this section we treat a linear regression problem in infinite dimensions, which may also have applications to statistical and machine learning. Let $H$ and $G$ be separable Hilbert spaces, and let $Q \in L_1^+(H)$ be a nonnegative self-adjoint nuclear operator. We consider the statistical model
\begin{align*}
\calm = \big( H, \calb(H), (\bbq_{\gamma} : \gamma \in \Gamma) \big)
\end{align*}
where $\Gamma := G \times (0,\infty)$ and
\begin{align*}
\bbq_{\gamma} = \N(A \beta, \sigma^2 Q), \quad \gamma = (\beta,\sigma^2) \in \Gamma
\end{align*}
with an injective linear operator $A \in L(G,H)$. Thus we conjecture a linear dependence
\begin{align*}
Y = A \beta + \sigma \epsilon
\end{align*}
with a noise $\epsilon \sim \N(0,Q)$, where $A$ is allowed to depend on a control variable $x \in G$, which can be the realization of a $G$-valued random variable $X$. Hence, we are interested in learning $\beta$, which determines the linear dependence between $X$ and $Y$.

\begin{example}
A simple example is the linear regression
\begin{align*}
Y = \beta_0 e + \beta_1 x + \sigma \epsilon
\end{align*}
with $\beta_0, \beta_1 \in \bbr$, $e \in H$ and a control variable $x \in H$ such that $e$ and $x$ are linearly independent. In this case we have $G = \bbr^2$ and the injective linear operator $A \in L(\bbr^2,H)$ is given by $A \beta = \beta_0 e + \beta_1 x$ for each $\beta = (\beta_0,\beta_1) \in \bbr^2$.
\end{example}

An estimator of the form $\widehat{\beta}(Y)$ with a measurable mapping $\widehat{\beta} : H \to G$ is called a \emph{least square estimator (LSE)} for $\beta$ if
\begin{align*}
\| Y - A \widehat{\beta}(Y) \| = \min_{\beta \in G} \| Y - A \beta \|.
\end{align*}
We set $U := \ran(A)$. Then the linear operator $A$ is an isomorphism $A \in L(G,U)$ due to the open mapping theorem, and hence $U$ is a closed subspace of $H$. For what follows, we assume that $U$ and $U^{\perp}$ are $Q$-invariant.

Note that the statistical model $\calm$ coincides with (\ref{model}), where $\Theta := U \times (0,\infty)$ and the family $(\bbp_{\vartheta} : \vartheta \in \Theta)$ is given by (\ref{fam-measures}). More precisely, for each $\vartheta = (\zeta,\sigma^2) \in \Theta$ we have $\bbp_{\vartheta} = \bbq_{\gamma}$, where $\gamma = (\beta,\sigma^2) \in \Gamma$ is given by $\beta = A^{-1} \zeta$. Conversely, for each $\gamma = (\beta,\sigma^2) \in \Gamma$ we have $\bbq_{\gamma} = \bbp_{\vartheta}$, where $\vartheta = (\zeta,\sigma^2) \in \Theta$ is given by $\zeta = A \beta$. 

\begin{proposition}
There is a unique LSE for $\beta$; it is given by $\widehat{\beta}(Y) = A^{-1} \widehat{\zeta}(Y)$.
\end{proposition}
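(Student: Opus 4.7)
The plan is to reduce the least-squares problem over $\beta \in G$ to an orthogonal projection problem on $U$, and then appeal to the fact that $A : G \to U$ is a bijection to transport the answer back.

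First I would observe that, since $A \in L(G,U)$ is an isomorphism (as already noted via the open mapping theorem), the map $\beta \mapsto A\beta$ is a bijection $G \to U$. Consequently
\begin{align*}
\min_{\beta \in G} \| Y - A\beta \| = \min_{u \in U} \| Y - u \|,
\end{align*}
and a $\widehat{\beta}(Y)$ is an LSE if and only if $A \widehat{\beta}(Y)$ attains the right-hand side minimum.

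Next I would invoke the orthogonal projection theorem for the closed subspace $U \subset H$: the unique element $u^\ast \in U$ minimizing $\|Y - u\|$ is $u^\ast = \Pi_U Y = \widehat{\zeta}(Y)$. Combining this with the previous step, $A \widehat{\beta}(Y) = \widehat{\zeta}(Y)$, and since $A$ is injective this forces $\widehat{\beta}(Y) = A^{-1} \widehat{\zeta}(Y)$, establishing both existence and uniqueness simultaneously.

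I do not expect any serious obstacle here; everything rests on the two inputs (i) $A$ is a bijection $G \to U$ and (ii) the closed subspace $U$ admits a unique orthogonal projection. The only minor technicality worth mentioning explicitly is measurability of $\widehat{\beta}$, which is inherited from $\widehat{\zeta} = \Pi_U \in L(H)$ and the bounded inverse $A^{-1} \in L(U,G)$, so that $\widehat{\beta} = A^{-1} \Pi_U$ is continuous and in particular Borel measurable.
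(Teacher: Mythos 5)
Your proposal is correct and follows essentially the same route as the paper: reduce the minimization over $\beta \in G$ to minimization over $u \in U$ via the isomorphism $A \in L(G,U)$, identify the unique minimizer as $\Pi_U Y$ by the projection theorem, and pull back with $A^{-1}$. The explicit remark on measurability of $\widehat{\beta} = A^{-1}\Pi_U$ is a harmless addition the paper leaves implicit.
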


\begin{proof}
Note that an estimator $\widehat{\beta}(Y)$ is a LSE for $\beta$ if and only if
\begin{align*}
A \widehat{\beta}(Y) = \Pi_U Y,
\end{align*}
which is equivalent to $\widehat{\beta}(Y) = A^{-1} \Pi_U Y = A^{-1} \widehat{\zeta}(Y)$.
\end{proof}

As a consequence, we have $\widehat{\zeta}(Y) = A \widehat{\beta}(Y)$. These findings allow us to transfer our previous results to the present setting. Let us first consider the statistical model (\ref{model-0}), where the probability measures are given by (\ref{family-0}); that is, the variance $\sigma^2$ is assumed to be known. In view of the upcoming results, note that $(A^{-1})^* = (A^*)^{-1}$; see, for example \cite[Lemma 6.1]{Tappe-cones-2}.

\begin{corollary}
Let $c \in H$ with $\langle Q (A^{-1})^* c, \Pi_U (A^{-1})^* c \rangle > 0$ and $\alpha \in (0,1)$ be arbitrary. Then the random interval
\begin{align*}
\langle c,\widehat{\beta}(Y) \rangle \pm z_{1-\frac{\alpha}{2}} \, \sigma \sqrt{\langle Q (A^{-1})^* c, \Pi_U (A^{-1})^* c \rangle}
\end{align*}
is an $(1-\alpha)$-confidence interval for $\langle c,\beta \rangle$.
\end{corollary}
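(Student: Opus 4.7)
The plan is to reduce this corollary to Theorem \ref{thm-conf-known} by an adjoint manipulation that converts a linear functional of $\widehat{\beta}(Y)$ into a linear functional of $\widehat{\zeta}(Y)$.

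First I would observe that, because the statistical model $\calm$ coincides with (\ref{model}) under the correspondence $\zeta = A\beta$ and $\beta = A^{-1}\zeta$, a confidence statement for $\langle c,\beta\rangle$ is really a confidence statement for $\langle c, A^{-1}\zeta\rangle$. Using the standard adjoint identity $(A^{-1})^* = (A^*)^{-1}$ recalled in the paper, I would rewrite
\begin{align*}
\langle c,\beta\rangle &= \langle c, A^{-1}\zeta\rangle = \langle (A^{-1})^* c, \zeta\rangle,
\\ \langle c,\widehat{\beta}(Y)\rangle &= \langle c, A^{-1}\widehat{\zeta}(Y)\rangle = \langle (A^{-1})^* c, \widehat{\zeta}(Y)\rangle.
\end{align*}

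Next, I would introduce the shorthand $b := (A^{-1})^* c \in H$ and note that the hypothesis $\langle Q(A^{-1})^* c, \Pi_U (A^{-1})^* c\rangle > 0$ is exactly the hypothesis $\langle Qb,\Pi_U b\rangle > 0$ required by Theorem \ref{thm-conf-known}. Applying that theorem with this choice of $b$ immediately yields that the random interval
\begin{align*}
\langle b,\widehat{\zeta}(Y)\rangle \pm z_{1-\frac{\alpha}{2}}\,\sigma\sqrt{\langle Qb,\Pi_U b\rangle}
\end{align*}
is an $(1-\alpha)$-confidence interval for $\langle b,\zeta\rangle$. Substituting back $b = (A^{-1})^* c$ and using the two identities above translates this verbatim into the claimed confidence interval for $\langle c,\beta\rangle$.

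There is essentially no obstacle here beyond bookkeeping; the only thing to be careful about is to verify that $\Pi_U (A^{-1})^* c$ is the right projection (the projection onto the same subspace $U = \ran(A)$ used in Theorem \ref{thm-conf-known}), which is automatic because the reduction to the model (\ref{model}) uses precisely this subspace. Hence the proof is a one-line invocation of Theorem \ref{thm-conf-known} after the adjoint rewriting, and no independent computation of quantiles or covariances is needed.
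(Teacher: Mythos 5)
Your proposal is correct and follows essentially the same route as the paper's own proof: both rewrite $\langle c,\beta\rangle = \langle (A^{-1})^*c,\zeta\rangle$ and $\langle c,\widehat{\beta}(Y)\rangle = \langle (A^{-1})^*c,\widehat{\zeta}(Y)\rangle$ via the identity $(A^{-1})^* = (A^*)^{-1}$ and then invoke Theorem \ref{thm-conf-known} with $b = (A^{-1})^*c$. No gaps.
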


\begin{proof}
For each $\beta \in G$ we have
\begin{align*}
&\left\{ \la c,\beta \ra \in \langle c,\widehat{\beta}(Y) \rangle \pm z_{1-\frac{\alpha}{2}} \, \sigma \sqrt{\langle Q (A^{-1})^* c, \Pi_U (A^{-1})^* c \rangle} \right\}
\\ &= \left\{ \la c,A^{-1} \zeta \ra \in \langle c,A^{-1} \widehat{\zeta}(Y) \rangle \pm z_{1-\frac{\alpha}{2}} \, \sigma \sqrt{\langle Q (A^{-1})^* c, \Pi_U (A^{-1})^* c \rangle} \right\}
\\ &= \left\{ \la (A^{-1})^* c,\zeta \ra \in \langle (A^{-1})^* c, \widehat{\zeta}(Y) \rangle \pm z_{1-\frac{\alpha}{2}} \, \sigma \sqrt{\langle Q (A^{-1})^* c, \Pi_U (A^{-1})^* c \rangle} \right\}.
\end{align*}
Therefore, applying Theorem \ref{thm-conf-known} completes the proof.
\end{proof}

Now consider the statistical model (\ref{model}), where the parameter domain is given by $\Theta := U \times (0,\infty)$, and where the probability measures are the Gaussian measures given by (\ref{fam-measures}). We assume $Q(\Pi_H - \Pi_U) \neq 0$ and introduce $\tau,\lambda > 0$ and $n \in \bbn$ by (\ref{conf-def-1})--(\ref{conf-def-3}).

\begin{corollary}
Let $b \in H$ with $\langle Q (A^{-1})^* c, \Pi_U (A^{-1})^* c \rangle > 0$ and $\alpha \in (0,1)$ be arbitrary. Then the random interval
\begin{align*}
\langle c,\widehat{\beta}(Y) \rangle \pm \sqrt{\frac{\tau}{\lambda n}} \, t_{n, 1-\frac{\alpha}{2}} \, \widehat{s}(Y) \sqrt{\langle Q (A^{-1})^* c,\Pi_U (A^{-1})^* c \rangle}
\end{align*}
is an $(1-\alpha)$-confidence interval for $\langle c,\beta \rangle$.
\end{corollary}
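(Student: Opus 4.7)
The plan is to reduce the claim to Theorem \ref{thm-conf-unknown} by exactly the same bijective change of variable that was used in the preceding corollary for the known-variance case. The key algebraic fact is that $\widehat{\beta}(Y) = A^{-1} \widehat{\zeta}(Y)$, which was established in the proposition above, together with the adjoint identity $(A^{-1})^* = (A^*)^{-1}$ cited from \cite{Tappe-cones-2}, and the observation that $\bbq_\gamma = \bbp_\vartheta$ under the correspondence $\zeta = A \beta$.

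First I would fix an arbitrary $\gamma = (\beta,\sigma^2) \in \Gamma$ and write $\vartheta = (\zeta,\sigma^2) \in \Theta$ with $\zeta := A\beta$. Setting $b := (A^{-1})^* c \in H$, the pairing identities
\begin{align*}
\langle c, \widehat{\beta}(Y) \rangle &= \langle c, A^{-1} \widehat{\zeta}(Y) \rangle = \langle (A^{-1})^* c, \widehat{\zeta}(Y) \rangle = \langle b, \widehat{\zeta}(Y) \rangle,
\\ \langle c, \beta \rangle &= \langle c, A^{-1} \zeta \rangle = \langle (A^{-1})^* c, \zeta \rangle = \langle b, \zeta \rangle
\end{align*}
turn the candidate interval for $\langle c,\beta \rangle$ into exactly the interval from (\ref{conf-interval}) for $\langle b, \zeta \rangle$, since the quantities $\tau$, $\lambda$, $n$ defined via (\ref{conf-def-1})--(\ref{conf-def-3}) and the estimator $\widehat{s}(Y)$ depend only on $Q$, $U$, and $Y$, and are therefore unaffected by the reparametrization.

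Next I would verify the hypothesis of Theorem \ref{thm-conf-unknown} in the form needed here: the nondegeneracy assumption $\langle Q(A^{-1})^* c, \Pi_U (A^{-1})^* c \rangle > 0$ is precisely $\langle Qb, \Pi_U b \rangle > 0$ with the substitution $b = (A^{-1})^* c$, so it transfers verbatim. Then a direct application of Theorem \ref{thm-conf-unknown} under $\bbp_\vartheta$ yields the $(1-\alpha)$ coverage probability for $\langle b,\zeta \rangle \in \langle b,\widehat{\zeta}(Y) \rangle \pm \sqrt{\tau/(\lambda n)}\, t_{n,1-\alpha/2}\, \widehat{s}(Y) \sqrt{\langle Qb, \Pi_U b \rangle}$, and under the identification $\bbq_\gamma = \bbp_\vartheta$ this coverage statement is exactly the claimed $(1-\alpha)$-confidence interval for $\langle c,\beta \rangle$ under $\bbq_\gamma$.

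There is no genuine obstacle here; the only thing one must be careful about is bookkeeping of the adjoint, to ensure that moving $A^{-1}$ from the right argument to the left of $\langle \cdot,\cdot \rangle$ produces $(A^{-1})^*$ rather than $A^*$ or its inverse. Since $A \in L(G,H)$ is an isomorphism onto $U$ by the open mapping theorem, $A^{-1} \in L(U,G)$ (extended by $0$ or by any convention on $U^\perp$ as implicit in the formula $\widehat{\beta}(Y) = A^{-1}\widehat{\zeta}(Y)$, noting that $\widehat{\zeta}(Y) \in U$), and $(A^{-1})^* = (A^*)^{-1}$ makes the pairing calculations unambiguous. Beyond that, the proof is a one-line invocation of Theorem \ref{thm-conf-unknown} with $b = (A^{-1})^* c$, in complete parallel to the preceding corollary.
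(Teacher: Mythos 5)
Your proposal is correct and follows essentially the same route as the paper: both set $b := (A^{-1})^* c$, use $\widehat{\beta}(Y) = A^{-1}\widehat{\zeta}(Y)$ and the adjoint identity to rewrite the coverage event for $\langle c,\beta\rangle$ as the coverage event for $\langle b,\zeta\rangle$, and then invoke Theorem \ref{thm-conf-unknown}. No gaps.
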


\begin{proof}
For each $\beta \in G$ we have
\begin{align*}
&\bigg\{ \la c,\beta \ra \in \langle c,\widehat{\beta}(Y) \rangle \pm \sqrt{\frac{\tau}{\lambda n}} \, t_{n, 1-\frac{\alpha}{2}} \, \widehat{s}(Y) \sqrt{\langle Q (A^{-1})^* c,\Pi_U (A^{-1})^* c \rangle} \bigg\}
\\ &= \bigg\{ \la c, A^{-1} \zeta \ra \in \langle c,A^{-1} \widehat{\zeta}(Y) \rangle \pm \sqrt{\frac{\tau}{\lambda n}} \, t_{n, 1-\frac{\alpha}{2}} \, \widehat{s}(Y) \sqrt{\langle Q (A^{-1})^* c,\Pi_U (A^{-1})^* c \rangle} \bigg\}
\\ &= \bigg\{ \la (A^{-1})^* c, \zeta \ra \in \langle (A^{-1})^* c, \widehat{\zeta}(Y) \rangle
\\ &\qquad \qquad \qquad \qquad \quad \pm \sqrt{\frac{\tau}{\lambda n}} \, t_{n, 1-\frac{\alpha}{2}} \, \widehat{s}(Y) \sqrt{\langle Q (A^{-1})^* c,\Pi_U (A^{-1})^* c \rangle} \bigg\}.
\end{align*}
Therefore, applying Theorem \ref{thm-conf-unknown} completes the proof.
\end{proof}

Now, let $G_0 \subset G$ be a closed subspace with $G_0 \neq \{ 0 \}$ and $G_0 \neq G$. Consider the test problem
\begin{align}\label{test-2}
H_0 : \{ \beta \in G_0 \} \quad \text{versus} \quad H_1 : \{ \zeta \notin G_0 \}.
\end{align}
Introducing $\Gamma_0 := G_0 \times (0,\infty)$, the test problem can be written as
\begin{align*}
H_0 : \{ \gamma \in \Gamma_0 \} \quad \text{versus} \quad H_1 : \{ \gamma \notin \Gamma_0 \}.
\end{align*}
Set $U_0 := \ran(G_0)$. We assume that $U_0$ and $U_0^{\perp} \cap U$ are $Q$-invariant. Furthermore, we assume that $Q(\Pi_H - \Pi_U), Q(\Pi_U - \Pi_{U_0}) \neq 0$, and that the range of $Q(\Pi_U - \Pi_{U_0})$ is finite dimensional. We define $\lambda,\mu > 0$ and $n,m \in \bbn$ as (\ref{test-def-1})--(\ref{test-def-4}). As an immediate consequence of Theorem \ref{thm-test} we obtain the following result.

\begin{corollary}
For each $\alpha \in (0,1)$ the statistical test (\ref{test}) for the test problem (\ref{test-2}) has level of significance $\alpha$.
\end{corollary}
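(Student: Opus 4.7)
The plan is to reduce this corollary directly to Theorem \ref{thm-test} via the parameter correspondence $\gamma = (\beta,\sigma^2) \leftrightarrow \vartheta = (A\beta,\sigma^2)$ between $\Gamma = G \times (0,\infty)$ and $\Theta = U \times (0,\infty)$ that was already established at the beginning of this section. Under this correspondence the two statistical models coincide pointwise: $\bbq_{\gamma} = \bbp_{\vartheta}$. Since the test statistic $\delta(Y)$ in (\ref{test}) depends only on the observation $Y$ and not on the parameterization, it suffices to check that $\Gamma_0$ corresponds to $\Theta_0$ under this bijection and that the structural hypotheses of Theorem \ref{thm-test} are satisfied for $U_0 := A(G_0)$.

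First I would verify that $U_0 = A(G_0)$ inherits the requisite properties. Since $A \in L(G,U)$ is an isomorphism by the open mapping theorem, the closed subspace $G_0 \subset G$ is carried to a closed subspace of $U$, hence of $H$, and the strict inclusions $\{0\} \subsetneq G_0 \subsetneq G$ are preserved so that $\{0\} \subsetneq U_0 \subsetneq U$. The remaining assumptions of Theorem \ref{thm-test}, namely the $Q$-invariance of $U_0$ and $U_0^{\perp} \cap U$, the non-triviality $Q(\Pi_H - \Pi_U), Q(\Pi_U - \Pi_{U_0}) \neq 0$, and the finite-dimensionality of $\ran(Q(\Pi_U - \Pi_{U_0}))$, are all explicitly imposed in the present setting, and the quantities $\lambda,\mu,n,m$ from (\ref{test-def-1})--(\ref{test-def-4}) are defined identically in both contexts.

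Next I would observe that the null hypothesis translates cleanly: $\beta \in G_0$ is equivalent to $\zeta = A\beta \in A(G_0) = U_0$, so $\gamma \in \Gamma_0$ iff the corresponding $\vartheta$ lies in $\Theta_0$. Therefore, for every $\gamma = (\beta,\sigma^2) \in \Gamma_0$, writing $\vartheta = (A\beta,\sigma^2) \in \Theta_0$, we have
\begin{align*}
\bbe_{\bbq_{\gamma}}[\delta(Y)] = \bbe_{\bbp_{\vartheta}}[\delta(Y)] \leq \alpha,
\end{align*}
where the equality follows from $\bbq_{\gamma} = \bbp_{\vartheta}$ and the inequality is the content of Theorem \ref{thm-test}. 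This yields the claim. There is no real obstacle here; the corollary is essentially a bookkeeping exercise, and the only subtle point is simply confirming that the image $A(G_0)$ satisfies all the structural hypotheses placed on the subspace $U_0$ in Theorem \ref{thm-test}, which it does by assumption.
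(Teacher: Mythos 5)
Your proof is correct and matches the paper's intent: the paper states this corollary as an immediate consequence of Theorem \ref{thm-test}, relying on exactly the reparameterization $\bbq_{\gamma} = \bbp_{\vartheta}$ with $\vartheta = (A\beta,\sigma^2)$ and the identification $U_0 = A(G_0)$ that you spell out (the paper's ``$U_0 := \ran(G_0)$'' is evidently meant as $A(G_0)$). You have simply made explicit the bookkeeping that the paper leaves to the reader; there is no gap.
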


\begin{appendix}

\section{Linear operators in Hilbert spaces}\label{app-operators}

In this appendix we provide the required results about linear operators in Hilbert spaces. For two Hilbert spaces $H$ and $G$ the notations $L(H,G)$, $L_1(H,G)$, $L_2(H,G)$ and $L^+(H,G)$ mean the sets of all bounded linear operators, nuclear operators, Hilbert-Schmidt operators and nonnegative operators. If $G=H$, then we also use the notations $L(H)$, $L_1(H)$, $L_2(H)$ and $L^+(H)$. For what follows, let $H$ be a Hilbert space. For a closed subspace $U \subset H$ we denote by $\Pi_U \in L(H)$ the orthogonal projection on $U$. In particular, $\Pi_H$ is the identity operator on $H$. Given a linear operator $T \in L(H)$, a subspace $U \subset H$ is called \emph{$T$-invariant} if $T(U) \subset U$.

\begin{lemma}\label{lemma-self-adjoint}
Let $T \in L(H)$ and a subspace $U \subset H$ be such that $T|_U = \Id|_U$. Then $U^{\perp}$ is $T^*$-invariant.
\end{lemma}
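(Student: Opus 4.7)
The plan is very short because the statement is essentially a direct unfolding of the adjoint relation combined with the hypothesis $T|_U = \Id|_U$. I would take an arbitrary $v \in U^{\perp}$ and verify that $T^*v$ lies in $U^{\perp}$ by checking orthogonality against an arbitrary $u \in U$.

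Concretely, the key identity is
\begin{equation*}
\langle T^* v, u \rangle = \langle v, Tu \rangle = \langle v, u \rangle = 0,
\end{equation*}
where the first equality is the definition of the adjoint, the second uses the hypothesis $Tu = u$ (valid because $u \in U$ and $T|_U = \Id|_U$), and the third uses $v \in U^{\perp}$. Since $u \in U$ was arbitrary, this shows $T^*v \in U^{\perp}$, and since $v \in U^{\perp}$ was arbitrary, we conclude that $T^*(U^{\perp}) \subset U^{\perp}$, i.e.\ $U^{\perp}$ is $T^*$-invariant.

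There is no genuine obstacle here; the only thing to be mindful of is that the lemma does \emph{not} require $U$ to be closed (the orthogonal complement $U^{\perp}$ is automatically closed regardless), and that no self-adjointness or boundedness subtleties intervene since $T \in L(H)$ is given. The lemma is then applied in the proof of Theorem \ref{thm-risk-min} with a self-adjoint $T$, where $T^* = T$, yielding $T$-invariance of $U^{\perp}$ as needed.
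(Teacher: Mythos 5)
Your proof is correct and follows essentially the same computation as the paper: both unfold $\langle T^*v,u\rangle=\langle v,Tu\rangle=\langle v,u\rangle$ for $u\in U$, the only cosmetic difference being that the paper first concludes $T^*v-v\in U^{\perp}$ and then adds $v\in U^{\perp}$, whereas you use $\langle v,u\rangle=0$ directly. No gap.
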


\begin{proof}
Let $y \in U^{\perp}$ be arbitrary. Then we have
\begin{align*}
\la x,T^* y \ra = \la Tx,y \ra = \la x,y \ra \quad \text{for all $x \in U$.}
\end{align*}
Hence we obtain
\begin{align*}
\la T^* y - y, x \ra = 0 \quad \text{for all $x \in U$.}
\end{align*}
It follows that $T^* y - y \in U^{\perp}$, and thus $T^* y \in U^{\perp}$.
\end{proof}

\begin{lemma}\label{lemma-operator-kernel}
Let $T \geq 0$ be a self-adjoint, compact operator. Then for each $x \in H$ with $\la Tx,x \ra = 0$ we have $x \in \ker(T)$.
\end{lemma}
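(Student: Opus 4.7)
The plan is to exploit the fact that $T \geq 0$ and self-adjoint induces a positive semidefinite sesquilinear form $(x,y) \mapsto \la Tx, y \ra$ on $H$, for which the Cauchy--Schwarz inequality is available. Once we have
\[
|\la Tx, y \ra|^2 \leq \la Tx, x \ra \la Ty, y \ra \quad \text{for all } x, y \in H,
\]
the hypothesis $\la Tx, x\ra = 0$ forces $\la Tx, y \ra = 0$ for every $y \in H$, and hence $Tx = 0$, i.e.\ $x \in \ker(T)$. Notably this argument does not even require compactness; the lemma as stated therefore holds in the slightly more general framework, but compactness is harmless.

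Concretely, the key steps I would carry out are: (i) verify that $(x,y) \mapsto \la Tx, y \ra$ is a positive semidefinite sesquilinear form, using that $T$ is self-adjoint (for symmetry/Hermiticity) and nonnegative (for $\la Tx, x \ra \geq 0$); (ii) invoke the standard Cauchy--Schwarz inequality for such forms; (iii) apply it with the hypothesis $\la Tx, x \ra = 0$ to conclude $\la Tx, y \ra = 0$ for all $y$; (iv) take $y = Tx$ (or invoke the Riesz representation of $Tx$) to deduce $\|Tx\|^2 = 0$.

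There is essentially no obstacle. The only subtlety is justifying Cauchy--Schwarz for a merely positive semidefinite form (as opposed to positive definite): one handles this by perturbing to $\la (T+\varepsilon \Id) x, y\ra$ and letting $\varepsilon \downarrow 0$, or, equivalently, by expanding $0 \leq \la T(x + ty), x + ty \ra$ as a quadratic polynomial in $t \in \bbr$ and requiring the discriminant to be nonpositive, a purely algebraic maneuver.

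An alternative approach, more in the spirit of the paper since compactness is assumed, would be to invoke the spectral theorem for compact self-adjoint operators: write $T = \sum_k \lambda_k \la \cdot, e_k \ra e_k$ with $\lambda_k \geq 0$ and $(e_k)$ an orthonormal system in $\overline{\ran(T)}$. Then
\[
0 = \la Tx, x \ra = \sum_k \lambda_k |\la x, e_k \ra|^2
\]
forces $\la x, e_k \ra = 0$ for every $k$ with $\lambda_k > 0$, whence $Tx = 0$. This is slightly longer but uses only tools already employed elsewhere in the paper.
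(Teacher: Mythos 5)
Your argument is correct, and it takes a genuinely different route from the paper. The paper's proof is a one-liner via the operator square root: since $T \geq 0$ is self-adjoint, $T^{1/2}$ exists and is self-adjoint, so $0 = \la Tx,x \ra = \| T^{1/2} x \|^2$ gives $T^{1/2}x = 0$ and hence $Tx = T^{1/2}T^{1/2}x = 0$. You instead invoke the Cauchy--Schwarz inequality for the positive semidefinite sesquilinear form $(x,y) \mapsto \la Tx,y \ra$, concluding $\la Tx,y \ra = 0$ for all $y$ and then taking $y = Tx$. Both arguments are complete and neither actually uses compactness (the square root $T^{1/2}$ exists for any bounded nonnegative self-adjoint operator), so your observation that the hypothesis is stronger than needed applies equally to the paper's proof. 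The trade-off is that your route is more elementary --- it needs only the discriminant trick for semidefinite forms, not the existence of the square root, which rests on the continuous functional calculus or a limiting construction --- whereas the paper's route is shorter once $T^{1/2}$ is taken for granted, as it is elsewhere in the paper (e.g.\ in Proposition \ref{prop-chi-square-weighted}). Your alternative spectral-theorem argument is also valid and is closest in spirit to how the paper handles $Q$ throughout, though it is the only one of the three that genuinely uses compactness.
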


\begin{proof}
Since $T^{1/2}$ is also self-adjoint, we have
\begin{align*}
0 = \la Tx,x \ra = \la T^{1/2} T^{1/2} x,x \ra = \la T^{1/2} x, T^{1/2} x \ra = \| T^{1/2} x \|^2,
\end{align*}
and hence $T^{1/2} x = 0$. Therefore, we obtain $Tx =  T^{1/2} T^{1/2} x = 0$.
\end{proof}

The following auxiliary result is obvious.

\begin{lemma}\label{lemma-Q-Pi}
Let $Q \in L^+(H)$ be a nonnegative operator, and let $U \subset H$ be a closed subspace such that $U$ and $U^{\perp}$ are $Q$-invariant.
\begin{enumerate}
\item[(a)] We have $\Pi_U Q \Pi_U = Q \Pi_U = \Pi_U Q$.

\item[(b)] For each $b \in H$ we have
\begin{align*}
\la Q \Pi_U b, \Pi_U b \ra &= \la Q b, \Pi_U b \ra = \la Q \Pi_U b, b \ra,
\\ \la Q b, b \ra &= \la Q \Pi_U b, \Pi_U b \ra + \la Q \Pi_{U^{\perp}} b, \Pi_{U^{\perp}} b \ra.
\end{align*}
In particular, we have
\begin{align*}
\la Q \Pi_U b, \Pi_U b \ra \leq \la Q b, b \ra.
\end{align*}
\end{enumerate}
\end{lemma}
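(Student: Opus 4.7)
The plan is to deduce everything directly from the two invariance hypotheses $Q(U) \subset U$ and $Q(U^{\perp}) \subset U^{\perp}$, together with the self-adjointness of the orthogonal projection $\Pi_U$. No appeal to self-adjointness of $Q$ itself should be needed, only the nonnegativity at the very end.

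For part (a), I would first argue $\Pi_U Q \Pi_U = Q \Pi_U$: for every $x \in H$ the vector $\Pi_U x$ lies in $U$, so by $Q$-invariance $Q \Pi_U x \in U$, and hence $\Pi_U(Q \Pi_U x) = Q \Pi_U x$. Next I would show $Q \Pi_U = \Pi_U Q$ by writing $x = \Pi_U x + \Pi_{U^{\perp}} x$ and computing both sides. On one hand $Q \Pi_U x$ is just $Q$ applied to the $U$-component. On the other hand, $\Pi_U Q x = \Pi_U Q \Pi_U x + \Pi_U Q \Pi_{U^{\perp}} x$; the first term equals $Q \Pi_U x$ by the identity just established, and the second term vanishes because $Q \Pi_{U^{\perp}} x \in U^{\perp}$ by the invariance of $U^{\perp}$, hence is killed by $\Pi_U$.

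For the chain of equalities in (b), I would move the projection freely between the two slots of the inner product using the self-adjointness of $\Pi_U$ together with part (a). Explicitly,
\begin{align*}
\la Q \Pi_U b, \Pi_U b \ra = \la \Pi_U Q \Pi_U b, b \ra = \la Q \Pi_U b, b \ra = \la \Pi_U Q b, b \ra = \la Q b, \Pi_U b \ra,
\end{align*}
where the second and third equalities invoke part (a). For the decomposition of $\la Q b, b \ra$, I would substitute $b = \Pi_U b + \Pi_{U^{\perp}} b$ into both slots and expand by bilinearity. The two cross terms vanish because $Q \Pi_U b \in U$ is orthogonal to $\Pi_{U^{\perp}} b \in U^{\perp}$, and symmetrically $Q \Pi_{U^{\perp}} b \in U^{\perp}$ is orthogonal to $\Pi_U b \in U$, each time by the relevant $Q$-invariance hypothesis. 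The ``in particular'' inequality is then immediate from the nonnegativity of $Q$, which makes the summand $\la Q \Pi_{U^{\perp}} b, \Pi_{U^{\perp}} b \ra$ nonnegative and hence disposable.

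There is no real obstacle here; the statement is flagged as obvious in the excerpt, and the only subtlety is keeping track of which invariance hypothesis kills each cross term or each projected image. I would organize the write-up so that parts (a) and (b) are dispatched in a few lines each, with the decomposition in (b) presented as a single bilinear expansion followed by the two invariance-based vanishings.
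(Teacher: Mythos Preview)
Your proof is correct; the paper itself omits the argument entirely, stating just before the lemma that ``the following auxiliary result is obvious.'' Your write-up supplies precisely the direct invariance-and-projection computations one would expect here, so there is nothing to compare against and nothing to correct.
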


\begin{lemma}\label{lemma-proj-commute}
Let $U, V \subset H$ be two closed subspaces such that $\Pi_U \Pi_V = \Pi_V \Pi_U$. Then we have $\Pi_U \Pi_V = \Pi_{U \cap  V}$.
\end{lemma}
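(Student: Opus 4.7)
The plan is to show that $P := \Pi_U \Pi_V$ is itself an orthogonal projection, and then to identify its range as $U \cap V$. Since every bounded self-adjoint idempotent on $H$ equals the orthogonal projection onto its range, this will finish the proof.

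First I would verify that $P$ is self-adjoint and idempotent. For self-adjointness, I would compute $P^* = (\Pi_U \Pi_V)^* = \Pi_V^* \Pi_U^* = \Pi_V \Pi_U$, and then invoke the commutation hypothesis to conclude $P^* = \Pi_U \Pi_V = P$. For idempotence, I would use commutativity together with the fact that each $\Pi_U$, $\Pi_V$ is itself idempotent: $P^2 = \Pi_U \Pi_V \Pi_U \Pi_V = \Pi_U \Pi_U \Pi_V \Pi_V = \Pi_U \Pi_V = P$. These two properties characterise $P$ as the orthogonal projection onto $\ran(P)$.

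Second, I would identify $\ran(P)$ with $U \cap V$ by proving a double inclusion. For $\ran(P) \subset U \cap V$, I would note that for any $x \in H$, the element $Px = \Pi_U(\Pi_V x)$ lies in $U$ (being in the range of $\Pi_U$), while using commutativity $Px = \Pi_V(\Pi_U x)$ shows it also lies in $V$. For the reverse inclusion, given $x \in U \cap V$, I would apply $\Pi_V$ first, which fixes $x$ since $x \in V$, and then $\Pi_U$, which again fixes the result since $x \in U$; so $Px = x$, showing $x \in \ran(P)$. Hence $\ran(P) = U \cap V$ and consequently $P = \Pi_{U \cap V}$.

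There is no real obstacle here: the argument is essentially bookkeeping with projections, and every step uses only the hypothesis $\Pi_U \Pi_V = \Pi_V \Pi_U$, idempotence and self-adjointness of orthogonal projections, and the characterisation of an orthogonal projection as a self-adjoint idempotent operator acting as the identity on its range.
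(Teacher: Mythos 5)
Your proposal is correct and follows essentially the same route as the paper: both arguments show that $\Pi_U \Pi_V$ is a self-adjoint idempotent, invoke the characterisation of such operators as orthogonal projections, and then identify the range as $U \cap V$ by a double inclusion. You merely spell out the self-adjointness and idempotence computations that the paper leaves implicit (citing Rudin instead), so there is nothing to correct.
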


\begin{proof}
The linear operator $T := \Pi_U \Pi_V$ is a self-adjoint projection. Hence, by \cite[Thm. 12.14]{Rudin} the operator $T$ is an orthogonal projection; that is, $\ran(T) = \ker(T)^{\perp}$. We claim that $\ran(T) = U \cap V$. Indeed, for each $x \in U \cap V$ we have $Tx = \Pi_V \Pi_U x = \Pi_V x = x$, showing that $x \in \ran(T)$. Conversely, for each $y \in \ran(T)$ we have $Tx = y$ for some $x \in H$. Therefore, we have $\Pi_U \Pi_V x = y$, showing that $y \in U$. Moreover, we have $\Pi_V \Pi_U x = y$, showing that $y \in V$.
\end{proof}

\begin{lemma}\label{lemma-Pi-U-V}
Let $U \subset V \subset H$ be two closed subspaces. Then we have
\begin{align*}
\Pi_V - \Pi_U = \Pi_{U^{\perp} \cap V}.
\end{align*}
\end{lemma}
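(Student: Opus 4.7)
The plan is to show that $T := \Pi_V - \Pi_U$ is an orthogonal projection and that its range equals $U^{\perp} \cap V$; then $T = \Pi_{U^{\perp} \cap V}$ follows from the uniqueness of orthogonal projections.

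First I would establish that $\Pi_U$ and $\Pi_V$ commute with product $\Pi_U$. Since $U \subset V$, for every $x \in H$ we have $\Pi_U x \in U \subset V$, so $\Pi_V \Pi_U x = \Pi_U x$, giving $\Pi_V \Pi_U = \Pi_U$. Taking adjoints and using self-adjointness of projections yields $\Pi_U \Pi_V = \Pi_U$ as well. In particular the hypothesis of Lemma \ref{lemma-proj-commute} is satisfied (with $\Pi_U \Pi_V = \Pi_{U \cap V} = \Pi_U$, consistent with $U \subset V$).

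Next I would verify that $T$ is an orthogonal projection. Self-adjointness is immediate. Idempotence follows from
\begin{align*}
T^2 = \Pi_V^2 - \Pi_V \Pi_U - \Pi_U \Pi_V + \Pi_U^2 = \Pi_V - \Pi_U - \Pi_U + \Pi_U = T,
\end{align*}
using the commutation relations above. Hence, by the standard fact invoked in the proof of Lemma \ref{lemma-proj-commute} (e.g.\ \cite[Thm.~12.14]{Rudin}), $T$ is the orthogonal projection onto $\ran(T)$.

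The main (still short) step is the identification of $\ran(T)$ with $U^{\perp} \cap V$. For the inclusion $\ran(T) \subset U^{\perp} \cap V$, note that $Tx = \Pi_V x - \Pi_U x$ lies in $V$ because both summands do (using $U \subset V$), and $\Pi_U(Tx) = \Pi_U \Pi_V x - \Pi_U x = \Pi_U x - \Pi_U x = 0$, so $Tx \in \ker(\Pi_U) = U^{\perp}$. For the reverse inclusion, if $x \in U^{\perp} \cap V$ then $\Pi_V x = x$ and $\Pi_U x = 0$, so $Tx = x$, which shows $x \in \ran(T)$. I do not anticipate any real obstacle here; the only point requiring care is not to forget the inclusion $U \subset V$ when placing $\Pi_U x$ inside $V$.
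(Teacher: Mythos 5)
Your proof is correct, but it takes a different route from the paper. The paper writes the difference as a product of commuting projections, namely $\Pi_V - \Pi_U = \Pi_V \Pi_{U^{\perp}} = \Pi_{U^{\perp}} \Pi_V$, and then simply invokes Lemma \ref{lemma-proj-commute} to identify this product with $\Pi_{U^{\perp} \cap V}$; all the work of identifying the range is thereby delegated to that earlier lemma. You instead verify directly that $T = \Pi_V - \Pi_U$ is a self-adjoint idempotent and compute $\ran(T) = U^{\perp} \cap V$ by hand, using only the commutation relations $\Pi_V \Pi_U = \Pi_U \Pi_V = \Pi_U$. Both arguments ultimately rest on the same fact from \cite[Thm.~12.14]{Rudin} that a self-adjoint idempotent is the orthogonal projection onto its (automatically closed) range. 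The paper's version is shorter on the page because it reuses Lemma \ref{lemma-proj-commute}; yours is self-contained and makes the identification of the range explicit, at the cost of a few extra lines. One small remark: your parenthetical claim that Lemma \ref{lemma-proj-commute} is applicable with $\Pi_U \Pi_V = \Pi_U$ is true but plays no role in your argument, so it could be dropped.
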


\begin{proof}
We have
\begin{align*}
\Pi_V - \Pi_U = \Pi_V - \Pi_V \Pi_U = \Pi_V (\Id - \Pi_U) = \Pi_V \Pi_{U^{\perp}}
\end{align*}
as well as
\begin{align*}
\Pi_V - \Pi_U = \Pi_V - \Pi_U \Pi_V = (\Id - \Pi_U) \Pi_V = \Pi_{U^{\perp}} \Pi_V.
\end{align*}
Therefore, applying Lemma \ref{lemma-proj-commute} completes the proof.
\end{proof}

\begin{proposition}\label{prop-dist-proj}
Let $U \subset V \subset W \subset H$ be three closed subspaces. Then for each $x \in H$ we have
\begin{align*}
\| \Pi_{W} x - \Pi_{U} x \|^2 = \| \Pi_{W} x - \Pi_{V} x \|^2 + \| \Pi_{V} x - \Pi_{U} x \|^2.
\end{align*}
\end{proposition}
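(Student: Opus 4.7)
The plan is to reduce the identity to a single application of the Pythagorean theorem, after decomposing $\Pi_W x - \Pi_U x$ as the sum of two vectors that I will show are orthogonal.

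First I would write
\begin{equation*}
\Pi_W x - \Pi_U x = (\Pi_W x - \Pi_V x) + (\Pi_V x - \Pi_U x).
\end{equation*}
Since $V \subset W$ and $U \subset V$, Lemma \ref{lemma-Pi-U-V} applies to each difference and yields
\begin{equation*}
\Pi_W x - \Pi_V x = \Pi_{V^{\perp} \cap W} x \quad \text{and} \quad \Pi_V x - \Pi_U x = \Pi_{U^{\perp} \cap V} x.
\end{equation*}
The first vector lies in $V^{\perp} \cap W \subset V^{\perp}$, while the second lies in $U^{\perp} \cap V \subset V$. Consequently the two vectors are orthogonal, and the Pythagorean identity in $H$ gives exactly the claimed equality.

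I do not expect any serious obstacle here; the only subtlety is to ensure that the hypotheses of Lemma \ref{lemma-Pi-U-V} really apply, which they do because we are given a nested chain $U \subset V \subset W$. Everything else is elementary Hilbert space geometry.
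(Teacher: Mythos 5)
Your proposal is correct and follows exactly the paper's own argument: decompose $\Pi_W x - \Pi_U x$ into the two differences, identify each as a projection onto $V^{\perp}\cap W$ and $U^{\perp}\cap V$ via Lemma \ref{lemma-Pi-U-V}, observe these subspaces are orthogonal (one lies in $V^{\perp}$, the other in $V$), and apply the Pythagorean theorem. Nothing to add.
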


\begin{proof}
Using Lemma \ref{lemma-Pi-U-V} we obtain
\begin{align*}
\| \Pi_{W} x - \Pi_{U} x \|^2 &= \| (\Pi_{W} x - \Pi_{V} x) + (\Pi_{V} x - \Pi_{U} x) \|^2
\\ &= \| \Pi_{W \cap V^{\perp}} x + \Pi_{V \cap U^{\perp}} x \|^2
\\ &= \| \Pi_{W \cap V^{\perp}} x \|^2 + \| \Pi_{V \cap U^{\perp}} x \|^2
\\ &= \| \Pi_{W} x - \Pi_{V} x \|^2 + \| \Pi_{V} x - \Pi_{U} x \|^2,
\end{align*}
completing the proof.
\end{proof}

\section{Gaussian random variables in Hilbert spaces}\label{app-Gaussian}

In this appendix we provide the required results about Gaussian random variables in Hilbert spaces. For more details we refer, for example, to \cite[Sec. 2.1.1]{Atma-book}, \cite[Sec. 2.1]{Prevot-Roeckner}, \cite[Sec. 2.3.1]{Da_Prato} or \cite[Sec. 2.1]{Liu-Roeckner}.

Let $(\Omega,\calf,\bbp)$ be a probability space, and let $H$ be a separable Hilbert space. We recall that a random variable $Y : \Omega \to H$ is called a \emph{Gaussian random variable} if for each $b \in H$ the real-valued random variable $\langle b,Y \rangle$ has a (possibly degenerated) normal distribution. For a Gaussian random variable $Y$ there are a unique element $\zeta \in H$ and a unique nonnegative self-adjoint nuclear operator $Q \in L_1^+(H)$ such that
\begin{align*}
\bbe[\langle b,Y \rangle] &= \langle b,\zeta \rangle \quad \text{for all $b \in H$,}
\\ {\rm Cov}(\langle b,Y \rangle,\langle c,Y \rangle) &= \langle Q b,c \rangle \quad \text{for all $b,c \in H$.}
\end{align*}
We call $\zeta$ the \emph{mean} and $Q$ the \emph{covariance operator} of $Y$, and we write $Y \sim {\rm N}(\zeta,Q)$. For what follows, let $Y \sim \N(\zeta,Q)$ be a Gaussian random variable. The following result lists some well-known properties.

\begin{proposition}\label{prop-Gauss-rv}
The following statements are true:
\begin{enumerate}
\item[(a)] For each $b \in H$ we have $\langle b,Y \rangle \sim {\rm N}(\langle b,\zeta \rangle, \langle Q b,b \rangle)$.

\item[(b)] We have $\bbe[Y] = \zeta$.

\item[(c)] We have $\bbe[\| Y - \zeta \|^2] = \tr(Q)$.

\item[(d)] Let $G$ be another separable Hilbert space, and let $T \in L(H,G)$ be a linear operator. Then we have $T(Y) \sim \N(T \zeta, T Q T^*)$.
\end{enumerate}
\end{proposition}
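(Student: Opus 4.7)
The proof is essentially a bookkeeping exercise unpacking the definition of Gaussian random variable; the main task is to verify each item via the characterizing identities for the mean and the covariance operator.

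For part (a), I would observe that it is immediate from the definition: by hypothesis $\langle b, Y \rangle$ has a (possibly degenerate) normal distribution on $\bbr$, and its mean and variance are given, respectively, by $\bbe[\langle b, Y\rangle] = \langle b, \zeta\rangle$ and $\Var(\langle b, Y\rangle) = {\rm Cov}(\langle b, Y\rangle, \langle b, Y\rangle) = \langle Qb, b\rangle$, so the full distribution is $\N(\langle b, \zeta\rangle, \langle Qb, b\rangle)$.

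For part (b), the plan is to identify $\bbe[Y]$ through its action against arbitrary vectors. The definition of the Bochner integral combined with the defining identity gives $\langle b, \bbe[Y]\rangle = \bbe[\langle b, Y\rangle] = \langle b, \zeta\rangle$ for every $b \in H$, whence $\bbe[Y] = \zeta$.

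For part (c), I would pick an orthonormal basis $(e_k)_{k \in K}$ of $H$ (possible since $H$ is separable) and expand the norm using Parseval's identity as $\|Y - \zeta\|^2 = \sum_k \langle Y - \zeta, e_k\rangle^2$. Taking expectations and exchanging sum and expectation via monotone convergence yields $\bbe[\|Y - \zeta\|^2] = \sum_k \bbe[\langle Y - \zeta, e_k\rangle^2] = \sum_k \Var(\langle e_k, Y\rangle) = \sum_k \langle Qe_k, e_k\rangle = \tr(Q)$, where in the last step I recognize the trace of the nuclear operator $Q$ relative to the basis $(e_k)$.

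For part (d), the plan is to reduce to part (a) applied in the target space $G$: for any $c \in G$, $\langle c, T(Y)\rangle = \langle T^*c, Y\rangle$, which by part (a) is normally distributed. Computing mean and variance gives $\bbe[\langle c, T(Y)\rangle] = \langle T^*c, \zeta\rangle = \langle c, T\zeta\rangle$ and $\Var(\langle c, T(Y)\rangle) = \langle QT^*c, T^*c\rangle = \langle TQT^*c, c\rangle$; by the uniqueness clause in the definition of Gaussian random variable this identifies $T(Y) \sim \N(T\zeta, TQT^*)$. The only minor subtlety to flag is that $TQT^*$ is indeed nonnegative, self-adjoint and nuclear --- but this follows since $Q \in L_1^+(H)$ and the trace class is a two-sided ideal under composition with bounded operators, so there is no real obstacle.
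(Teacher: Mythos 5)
The paper offers no proof of this proposition: it is presented as a list of well-known properties with pointers to the standard references on Gaussian measures in Hilbert spaces (Da~Prato--Zabczyk, Pr\'ev\^ot--R\"ockner, etc.). Your direct verification is correct and is the natural argument one would give. Two small points are worth making explicit. In (b), before writing $\langle b,\bbe[Y]\rangle=\bbe[\langle b,Y\rangle]$ you need the Bochner integral $\bbe[Y]$ to exist, i.e.\ $\bbe[\|Y\|]<\infty$; this is guaranteed by Fernique's theorem (or by the paper's Proposition~\ref{prop-Gauss-moments}), and should be cited rather than assumed silently. In (d), the definition identifies the covariance operator through the full bilinear identity ${\rm Cov}(\langle c,T(Y)\rangle,\langle d,T(Y)\rangle)=\langle TQT^*c,d\rangle$ for all $c,d\in G$, not just the diagonal $c=d$; your variance computation does extend to this by the same manipulation (or by polarization, since $TQT^*$ is self-adjoint), but the off-diagonal check is what actually pins down the operator. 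Your observation that $TQT^*\in L_1^+(G)$ because $L_1$ is a two-sided ideal is exactly the right justification for the covariance operator being admissible.
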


Together with Lemma \ref{lemma-Q-Pi} the following result is an immediate consequence.

\begin{corollary}\label{cor-proj-Gauss}
Let $U \subset H$ be closed subspace with $\zeta \in U$ such that $U$ and $U^{\perp}$ are $Q$-invariant. Then the following statements are true:
\begin{enumerate}
\item[(a)] We have $\Pi_U Y \sim \N(\zeta,Q \Pi_U)$.

\item[(b)] For each $b \in H$ we have $\la b,\Pi_U Y \ra \sim \N(\la b,\zeta \ra, \la Qb,\Pi_U b \ra)$.
\end{enumerate}
\end{corollary}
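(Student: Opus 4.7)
The statement is essentially a direct computation combining Proposition \ref{prop-Gauss-rv} with Lemma \ref{lemma-Q-Pi}, so my plan is simply to apply each in turn, being careful about the roles of self-adjointness and $Q$-invariance.

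For part (a), I would apply Proposition \ref{prop-Gauss-rv}(d) to the linear operator $T = \Pi_U \in L(H)$. This yields
\begin{equation*}
\Pi_U Y \sim \N(\Pi_U \zeta, \Pi_U Q \Pi_U^*).
\end{equation*}
Two simplifications finish the job: since $\zeta \in U$, we have $\Pi_U \zeta = \zeta$; and since $\Pi_U$ is self-adjoint with $U$ and $U^\perp$ both $Q$-invariant, Lemma \ref{lemma-Q-Pi}(a) gives $\Pi_U Q \Pi_U^* = \Pi_U Q \Pi_U = Q \Pi_U$.

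For part (b), I would first rewrite $\la b, \Pi_U Y \ra = \la \Pi_U b, Y \ra$ using self-adjointness of $\Pi_U$, then apply Proposition \ref{prop-Gauss-rv}(a) to the element $\Pi_U b \in H$ to obtain
\begin{equation*}
\la \Pi_U b, Y \ra \sim \N\bigl(\la \Pi_U b, \zeta \ra, \la Q \Pi_U b, \Pi_U b \ra\bigr).
\end{equation*}
The mean simplifies using $\zeta \in U$ to $\la \Pi_U b, \zeta \ra = \la b, \Pi_U \zeta \ra = \la b, \zeta \ra$, and the variance simplifies via Lemma \ref{lemma-Q-Pi}(b) to $\la Q \Pi_U b, \Pi_U b \ra = \la Q b, \Pi_U b \ra$. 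Alternatively, part (b) can be deduced from part (a) by applying Proposition \ref{prop-Gauss-rv}(a) to the Gaussian random variable $\Pi_U Y$ with covariance operator $Q \Pi_U$, and again invoking Lemma \ref{lemma-Q-Pi}(b) to recognize $\la Q \Pi_U b, b \ra = \la Q b, \Pi_U b \ra$.

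There is no real obstacle here — the content is purely bookkeeping. The only point requiring any care is to verify that the hypotheses of Lemma \ref{lemma-Q-Pi} are in force (which they are, since the corollary assumes $U$ and $U^\perp$ are $Q$-invariant), and to keep track of when to pull $\Pi_U$ across the inner product using its self-adjointness versus when to invoke the invariance to rearrange $Q$ and $\Pi_U$.
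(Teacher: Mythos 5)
Your proposal is correct and follows exactly the route the paper intends: the corollary is stated there as an immediate consequence of Proposition \ref{prop-Gauss-rv} together with Lemma \ref{lemma-Q-Pi}, and your application of part (d) with $T=\Pi_U$ (using $\Pi_U\zeta=\zeta$ and $\Pi_U Q\Pi_U^*=Q\Pi_U$) and of part (a) to $\Pi_U b$ is precisely the bookkeeping the author leaves to the reader. Nothing is missing.
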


The next result is a consequence of \cite[Prop. 2.19]{Da_Prato}.

\begin{proposition}\label{prop-Gauss-moments}
We have $\bbe[ \| Y \|^m ] < \infty$ for each $m \in \bbn$.
\end{proposition}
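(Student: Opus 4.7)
My plan is to reduce to estimating moments of $\|Y-\zeta\|$ and then exploit the Karhunen--Loève-type series representation together with Minkowski's inequality in $L^p$. The main obstacle is controlling an infinite sum of squared Gaussians in $L^p$, which fortunately reduces to the trace-class condition on $Q$.

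First, since $\|Y\|^m \leq 2^{m-1}(\|\zeta\|^m + \|Y-\zeta\|^m)$ and $\|\zeta\| < \infty$, it suffices to show that $\bbe[\|Y-\zeta\|^m] < \infty$ for every $m \in \bbn$. By Lyapunov's inequality, $\bbe[\|Y-\zeta\|^m] \leq \bbe[\|Y-\zeta\|^{2m}]^{1/2}$, so I may further reduce to even exponents $2p$ with $p \in \bbn$.

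Next, I invoke the series representation (\ref{series-Gauss-intro}), writing $Y - \zeta = \sum_{k \in K} \sqrt{\lambda_k}\,\beta_k e_k$ with $(\beta_k)_{k \in K}$ i.i.d.\ $\N(0,1)$, an orthonormal system $(e_k)_{k \in K}$ of eigenvectors of $Q$, and eigenvalues $(\lambda_k)_{k \in K} \subset (0,\infty)$ with $\sum_{k \in K} \lambda_k = \tr(Q) < \infty$. By Parseval's identity one has
\begin{align*}
\|Y - \zeta\|^2 = \sum_{k \in K} \lambda_k \beta_k^2,
\end{align*}
where the right-hand side is a sum of nonnegative random variables. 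Applying Minkowski's inequality in $L^p(\bbp)$ (extended to countable sums by monotone convergence, valid since all summands are nonnegative) yields
\begin{align*}
\bbe\big[\|Y-\zeta\|^{2p}\big]^{1/p}
= \bigg\| \sum_{k \in K} \lambda_k \beta_k^2 \bigg\|_{L^p}
\leq \sum_{k \in K} \lambda_k \, \|\beta_k^2\|_{L^p}
= c_p \sum_{k \in K} \lambda_k,
\end{align*}
where $c_p := \bbe[\beta_1^{2p}]^{1/p}$ is finite because the standard normal distribution has moments of all orders. Since $\sum_{k \in K} \lambda_k = \tr(Q) < \infty$, the right-hand side is finite, which completes the argument.

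The only subtle point is the interchange of sum and expectation in the Minkowski step; I would justify it by applying Minkowski to the partial sums $\sum_{k \leq N} \lambda_k \beta_k^2$ and then letting $N \to \infty$, with monotone convergence on the left. As an alternative route (essentially the content of Da Prato's reference), one could instead cite Fernique's theorem to obtain $\bbe[\exp(\alpha \|Y-\zeta\|^2)] < \infty$ for some $\alpha > 0$ and then use $x^m \leq C_{m,\alpha}\,e^{\alpha x^2}$; but the Minkowski approach has the advantage of being entirely self-contained given the series representation already established in the paper.
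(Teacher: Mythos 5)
Your argument is correct, but it proceeds quite differently from the paper, which offers no internal proof at all: Proposition \ref{prop-Gauss-moments} is there justified solely by a citation of \cite[Prop. 2.19]{Da_Prato}, where the result is obtained from the exponential integrability of Gaussian measures (Fernique's theorem) --- precisely the alternative route you mention at the end. Your reduction chain is sound: the bound $\|Y\|^m \le 2^{m-1}(\|\zeta\|^m + \|Y-\zeta\|^m)$, the passage to even exponents via $\bbe[\|Y-\zeta\|^m] \le \bbe[\|Y-\zeta\|^{2m}]^{1/2}$, the Parseval identity $\|Y-\zeta\|^2 = \sum_{k \in K}\lambda_k\beta_k^2$ from the representation of Lemma \ref{lemma-series-Gauss}, and the countable Minkowski inequality
\begin{align*}
\bigg\| \sum_{k \in K} \lambda_k \beta_k^2 \bigg\|_{L^p} \leq \bbe\big[\beta_1^{2p}\big]^{1/p} \sum_{k \in K} \lambda_k = \big((2p-1)!!\big)^{1/p}\,\tr(Q) < \infty,
\end{align*}
whose limiting step you correctly justify by applying Minkowski to partial sums of nonnegative terms and invoking monotone convergence. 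What your approach buys is a fully self-contained, elementary proof using only the trace-class condition and the series representation already established in Appendix \ref{app-Gaussian}; it even yields the explicit quantitative bound $\bbe[\|Y-\zeta\|^{2p}] \le (2p-1)!!\,\tr(Q)^p$. What the paper's citation buys is brevity and, implicitly, the much stronger conclusion $\bbe[\exp(\alpha\|Y-\zeta\|^2)] < \infty$ for small $\alpha > 0$, of which all polynomial moments are an immediate corollary; that exponential bound is not needed anywhere in the paper, so your lighter argument would serve equally well here.
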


\begin{remark}\label{rem-spectral}
By the spectral theorem for self-adjoint operators there are an at most countable index set $K$, an orthonormal system $(e_k)_{k \in K}$, and positive elements $(\lambda_k)_{k \in K} \subset (0,\infty)$ with $\sum_{k \in K} \lambda_k < \infty$ such that $H = H_1 \oplus H_2$, where $H_1 := \ker(Q)$ and $H_2 := H_1^{\perp}$ is given by
\begin{align*}
H_2 = \overline{\lin} \{ e_k : k \in K \},
\end{align*}
and we have
\begin{align}\label{Q-diagonal}
Q e_k = \lambda_k e_k \quad \text{for all $k \in K$.}
\end{align}
\end{remark}

We define the random variables $(\beta_k)_{k \in K}$ as
\begin{align*}
\beta_k := \frac{1}{\sqrt{\lambda_k}} \langle Y,e_k \rangle, \quad k \in K.
\end{align*}

\begin{lemma}\label{lemma-series-Gauss}
The following statements are true:
\begin{enumerate}
\item[(a)] The random variables $(\beta_k)_{k \in K}$ are independent and identically distributed with $\beta_k \sim {\rm N}(0,1)$ for each $k \in K$.

\item[(b)] We have the series representation
\begin{align}\label{series-Gauss}
Y = \zeta + \sum_{k \in K} \sqrt{\lambda_k} \beta_k e_k.
\end{align}
\end{enumerate}
\end{lemma}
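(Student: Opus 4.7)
The plan is to exploit the spectral decomposition from Remark \ref{rem-spectral} together with the marginal and image properties of Gaussian random variables in Proposition \ref{prop-Gauss-rv}. For each $k \in K$, Proposition \ref{prop-Gauss-rv}(a) applied to $e_k$ combined with (\ref{Q-diagonal}) yields $\la Y, e_k \ra \sim \N(\la \zeta, e_k \ra, \lambda_k)$; since the series representation in (b) forces $\zeta \in H_1 = \ker(Q)$, we have $\la \zeta, e_k \ra = 0$ for every $k \in K$, and hence $\beta_k \sim \N(0,1)$. For the mutual independence, fix an arbitrary finite subset $\{k_1,\ldots,k_n\} \subset K$ and define $T \in L(H,\bbr^n)$ by $Th := (\la h,e_{k_1}\ra, \ldots, \la h,e_{k_n}\ra)$. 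By Proposition \ref{prop-Gauss-rv}(d), the random vector $TY$ is Gaussian in $\bbr^n$ with covariance matrix whose $(i,j)$-entry is $\la Q e_{k_i}, e_{k_j}\ra = \lambda_{k_i}\delta_{ij}$; thus the components are jointly Gaussian and pairwise uncorrelated, hence mutually independent. Rescaling by the constants $1/\sqrt{\lambda_{k_i}}$ preserves this, and since the finite subset is arbitrary, the whole family $(\beta_k)_{k \in K}$ is mutually independent, proving (a).

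For (b), decompose $Y = \Pi_{H_1} Y + \Pi_{H_2} Y$. Proposition \ref{prop-Gauss-rv}(d) gives $\Pi_{H_1} Y \sim \N(\Pi_{H_1}\zeta, \Pi_{H_1} Q \Pi_{H_1}) = \N(\zeta, 0)$, where we use $\zeta \in H_1$ and that $\Pi_{H_1} Q \Pi_{H_1} = 0$ by the definition of $H_1$; hence $\Pi_{H_1} Y = \zeta$ almost surely. Since $(e_k)_{k\in K}$ is an orthonormal basis of $H_2$, we have $\Pi_{H_2} Y = \sum_{k \in K}\la Y,e_k\ra e_k = \sum_{k \in K}\sqrt{\lambda_k}\beta_k e_k$, the series converging in $L^2(\Omega;H)$ by orthogonality: for any finite $F \subset K$, $\bbe \big[ \big\| \sum_{k \in F}\sqrt{\lambda_k}\beta_k e_k \big\|^2 \big] = \sum_{k \in F}\lambda_k$, with $\sum_{k \in K}\lambda_k = \tr(Q) < \infty$. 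Adding the two contributions yields (\ref{series-Gauss}).

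The only genuine subtlety is the passage from pairwise uncorrelatedness to mutual independence for a (possibly) countably infinite family; this is handled uniformly at every finite sub-collection by combining Proposition \ref{prop-Gauss-rv}(d) with the classical equivalence for jointly Gaussian random vectors. The remaining technical points — Bessel-type summability for the $L^2$-convergence of the expansion, and the triviality of $\Pi_{H_1} Q \Pi_{H_1}$ — are routine.
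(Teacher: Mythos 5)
The paper's own ``proof'' is a one-line citation of \cite[Prop. 2.1.6]{Liu-Roeckner}, so a self-contained argument is welcome, and most of yours is the standard route: joint Gaussianity of the finite-dimensional marginals via Proposition \ref{prop-Gauss-rv}(d), diagonal covariance matrix $\Rightarrow$ mutual independence for jointly Gaussian vectors, and the Parseval/$L^2$ convergence of the expansion on $H_2$. Those parts are correct.

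The genuine gap is in how you dispose of the means $\la \zeta, e_k \ra$. You argue that ``the series representation in (b) forces $\zeta \in H_1 = \ker(Q)$'' and then use $\la \zeta, e_k \ra = 0$ to get $\beta_k \sim \N(0,1)$. This is circular --- (b) is a conclusion of the lemma, not a hypothesis --- and, more importantly, $\zeta \in \ker(Q)$ is not part of the setup and is false in the situations where the lemma is applied: in the appendix $Y \sim \N(\zeta,Q)$ with $\zeta \in H$ arbitrary, and in the body of the paper $\zeta$ typically lies in a subspace $U$ spanned by eigenvectors of $Q$ with \emph{positive} eigenvalues (e.g.\ Example \ref{ex-Wiener-amplitude}, where $\zeta = \gamma h_k$ and $Q h_k = \lambda_k h_k$ with $\lambda_k > 0$). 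With the literal definition $\beta_k = \lambda_k^{-1/2} \la Y, e_k \ra$ one gets $\beta_k \sim \N\big(\lambda_k^{-1/2} \la \zeta, e_k \ra, 1\big)$, so part (a) fails for such $\zeta$. What you have actually uncovered is a typo in the definition of $\beta_k$: the intended quantity --- consistent with the cited \cite[Prop. 2.1.6]{Liu-Roeckner} and with the use of the lemma in the proof of Proposition \ref{prop-chi-square-weighted}, where the coefficient of $e_k$ in $Y$ is $\la \zeta, e_k \ra + \sqrt{\lambda_k}\beta_k$ --- is $\beta_k = \lambda_k^{-1/2} \la Y - \zeta, e_k \ra$. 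The correct move is to run your entire argument on the centered variable $Y - \zeta \sim \N(0,Q)$: then $\la Y - \zeta, e_k \ra \sim \N(0,\lambda_k)$ with no extra assumption, $\Pi_{H_1}(Y-\zeta) \sim \N(0, \Pi_{H_1} Q \Pi_{H_1}) = \N(0,0)$ so that $\Pi_{H_1}(Y - \zeta) = 0$ almost surely, and $Y = \zeta + \Pi_{H_1}(Y-\zeta) + \Pi_{H_2}(Y-\zeta) = \zeta + \sum_{k \in K} \sqrt{\lambda_k} \beta_k e_k$. Everything else in your write-up survives unchanged.
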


\begin{proof}
This follows from \cite[Prop. 2.1.6]{Liu-Roeckner} and its proof.
\end{proof}

\begin{lemma}\label{lemma-independence-ker}
Let $U,V \subset H$ be two closed $Q$-invariant subspaces such that $U \cap V = \{ 0 \}$. Furthermore, let $T,S \in L(H)$ be such that $U^{\perp} \subset \ker(T)$ and $V^{\perp} \subset \ker(S)$. Then the Gaussian random variables $T(Y)$ and $S(Y)$ are independent.
\end{lemma}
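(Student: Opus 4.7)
The plan is to reduce the claimed independence to the vanishing of a single cross-covariance operator, exploiting that $(T(Y), S(Y))$ is jointly Gaussian in $H \oplus H$. I would define $R \in L(H, H \oplus H)$ by $R(y) := (Ty, Sy)$ and apply Proposition \ref{prop-Gauss-rv}(d) to obtain that $R(Y) = (T(Y), S(Y))$ is Gaussian with covariance operator $RQR^*$. The off-diagonal block of this covariance is $SQT^*$ (and its adjoint $TQS^*$). By the characteristic-functional characterization of Gaussian laws, the two components of a jointly Gaussian Hilbert-space random variable are independent if and only if this off-diagonal block vanishes: the characteristic functional $\bbe[\exp(i\la h_1, T(Y)\ra + i\la h_2, S(Y)\ra)]$ factors as a product of characteristic functionals precisely when $\la h_1, T(Y)\ra$ and $\la h_2, S(Y)\ra$ are uncorrelated scalar Gaussians, which is Proposition \ref{prop-Gauss-rv}(a) applied to the linear operator $y \mapsto (\la h_1, Ty\ra, \la h_2, Sy\ra) \in \bbr^2$.

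The cross-covariance is then computed directly: for $h_1, h_2 \in H$,
\[
\mathrm{Cov}(\la h_1, T(Y)\ra, \la h_2, S(Y)\ra) = \mathrm{Cov}(\la T^* h_1, Y\ra, \la S^* h_2, Y\ra) = \la QT^* h_1, S^* h_2 \ra = \la SQT^* h_1, h_2\ra,
\]
so the task reduces to showing $SQT^* = 0$. From $U^\perp \subset \ker(T)$ one gets $\ran(T^*) \subset (\ker T)^\perp \subset U$, and the $Q$-invariance of $U$ then yields $\ran(QT^*) \subset U$. By a symmetric argument, $\ran(QS^*) \subset V$. To conclude $SQT^* = 0$ it remains to check that $S$ annihilates the subspace $U$, which in view of $V^\perp \subset \ker(S)$ means establishing the inclusion $U \subset V^\perp$.

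The step I expect to be the main obstacle is exactly this last inclusion: converting the trivial-intersection hypothesis $U \cap V = \{0\}$ into the orthogonality statement $U \subset V^\perp$. My plan is to apply the spectral decomposition of $Q$ in Remark \ref{rem-spectral}: since $U$ and $V$ are both $Q$-invariant and $Q$ is self-adjoint, $H$ splits into joint spectral components with respect to $Q$ that are compatible with each of $U$ and $V$, so both subspaces decompose into eigen-pieces of $Q$ together with a common $\ker(Q)$-part. On each eigenspace of a strictly positive eigenvalue, the triviality of $U \cap V$ promotes itself to orthogonality, while on $\ker(Q)$ the factor $Q$ in $SQT^*$ already kills any contribution. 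In the typical application encountered later in the paper one has $V = U^\perp$, so the inclusion $U \subset V^\perp = U$ is automatic and this obstacle disappears; the remainder of the argument then reduces to the Gaussian independence-from-uncorrelatedness principle already established in the previous paragraphs.
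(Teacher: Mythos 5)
Your reduction of independence to the vanishing of the cross-covariance operator $SQT^*$ is correct and is a genuinely different route from the paper's: the paper instead diagonalizes $Q$ compatibly with $U$ and $V$ and reads off independence from the series representation (\ref{series-Gauss}), since $T(Y)$ then depends only on the coordinates $(\beta_i)_{i\in I}$ attached to $U$ and $S(Y)$ only on the disjoint family $(\beta_j)_{j\in J}$ attached to $V$. Your computation $\mathrm{Cov}(\la h_1,T(Y)\ra,\la h_2,S(Y)\ra)=\la SQT^*h_1,h_2\ra$, the inclusion $\ran(QT^*)\subset U$ via $Q$-invariance, and the characteristic-functional argument for ``jointly Gaussian plus uncorrelated implies independent'' are all sound; your route has the advantage of isolating exactly what must be proved, namely $S|_{Q(U)}=0$.

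The gap is exactly where you predicted it, and your proposed fix does not work. Inside a single eigenspace $E_\lambda$ of $Q$ with $\lambda>0$ the operator $Q$ acts as $\lambda\,\Id$, so \emph{every} subspace of $E_\lambda$ is $Q$-invariant and the invariance hypothesis carries no information there; trivial intersection therefore does not promote itself to orthogonality. Concretely, take $H=\bbr^2$, $Q=\Id$, $U=\lin\{(1,0)\}$, $V=\lin\{(1,1)\}$, $T=\Pi_U$, $S=\Pi_V$: all hypotheses of the lemma hold, yet $\mathrm{Cov}(Y_1,(Y_1+Y_2)/2)=1/2\neq 0$, so $T(Y)$ and $S(Y)$ are not independent. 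The statement is thus false under mere trivial intersection; the hypothesis that makes your argument (and the lemma) work is $U\perp V$, or at least $Q(U)\perp V$ after discarding $\ker Q$. You should be aware that the paper's own proof has the same unjustified step: ``we may assume there are disjoint index sets $I,J$'' silently requires that a single eigenbasis of $Q$ contain orthonormal systems spanning both $U\cap\ker(Q)^{\perp}$ and $V\cap\ker(Q)^{\perp}$, which again forces orthogonality. In every application in the paper (Proposition \ref{prop-est-dist-subspaces} and the variance estimator of Section \ref{sec-est-var}) one has $V\subset U^{\perp}$, so the inclusion $U\subset V^{\perp}$ you need is immediate and your proof closes without further work.
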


\begin{proof}
Applying the spectral theorem to the operators $Q|_U$ and $Q|_V$ and recalling (\ref{Q-diagonal}), we may assume there are disjoint index sets $I,J \subset K$ such that $(e_i)_{i \in I}$ is an orthonormal system in $U$ and $(e_j)_{j \in J}$ is an orthonormal system in $V$. Using the series representation (\ref{series-Gauss}) from Lemma \ref{lemma-series-Gauss} we obtain
\begin{align*}
T(Y) &= T(\zeta) + T \bigg( \sum_{i \in I} \sqrt{\lambda_i} \beta_i e_i \bigg),
\\ S(Y) &= S(\zeta) + S \bigg( \sum_{j \in J} \sqrt{\lambda_j} \beta_j e_j \bigg),
\end{align*}
proving the Independence of $T(Y)$ and $S(Y)$.
\end{proof}

In the finite dimensional case $H = \bbr^n$ and $Q = \Id$ we have $\| Y \|^2 \sim \chi_n^2(\| \zeta \|^2)$, where $\chi_n^2(\lambda)$ denotes the noncentral $\chi^2$-distribution with $n$ degrees of freedom and non-centrality parameter $\lambda$. Hence, we have
\begin{align*}
\bbe \big[ \| Y \|^2 \big] &= n + \| \zeta \|^2,
\\ \Var \big[ \| Y \|^2 \big] &= 2 ( n + 2 \| \zeta \|^2 ).
\end{align*}
However, if $\dim H = \infty$, then the distribution of $\| Y \|^2$ is not available in closed form. However, in case $\zeta = 0$ there is an asymptotic formula for its density; see \cite{Zolotarev} and \cite{Hoeffding, Beran} for further extensions. In the situation where $\zeta = 0$ and the index set $K$ is finite, series representations for the density have been derived in \cite{Mathai} and \cite{Moschopoulos}. 

At any rate, in the present situation we can calculate expectation and variance of $\| Y \|^2$. For this purpose, we intervene with an auxiliary result.

\begin{lemma}\label{lemma-var-series}
Let $(\xi_k)_{k \in \bbn} \subset \call^2$ be a sequence of nonnegative, independent random variables $\xi_k : \Omega \to \bbr_+$ such that for the series $\xi := \sum_{k=1}^{\infty} \xi_k$ we have $\xi \in \call^2$. Then we have $\Var[\xi] = \sum_{k=1}^{\infty} \Var[\xi_k]$.
\end{lemma}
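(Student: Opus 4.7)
The plan is to approximate by partial sums and invoke monotone convergence. Set $S_n := \sum_{k=1}^n \xi_k$ for each $n \in \bbn$. Since the summands are nonnegative, the sequence $(S_n)_{n \in \bbn}$ is monotonically increasing with pointwise limit $\xi$, and for the same reason the sequence $(S_n^2)_{n \in \bbn}$ is also monotonically increasing with pointwise limit $\xi^2$.

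First I would treat the finite case: by the independence of $\xi_1,\ldots,\xi_n$ (which also live in $\call^2$ as summands bounded by $\xi \in \call^2$), the classical identity gives
\begin{align*}
\Var[S_n] = \sum_{k=1}^n \Var[\xi_k].
\end{align*}
Next I would pass to the limit on the left-hand side. By monotone convergence applied separately to $(S_n)$ and $(S_n^2)$, together with the integrability $\bbe[\xi^2] < \infty$ granted by the assumption $\xi \in \call^2$, we obtain $\bbe[S_n] \to \bbe[\xi]$ and $\bbe[S_n^2] \to \bbe[\xi^2]$, hence
\begin{align*}
\Var[S_n] = \bbe[S_n^2] - \bbe[S_n]^2 \longrightarrow \bbe[\xi^2] - \bbe[\xi]^2 = \Var[\xi].
\end{align*}
Combining the two displays yields $\sum_{k=1}^{\infty} \Var[\xi_k] = \Var[\xi]$.

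There is no real obstacle here; the only subtle point is the interchange of limit and expectation in $\bbe[S_n^2] \to \bbe[\xi^2]$, but nonnegativity of the $\xi_k$ makes $S_n^2$ monotone, so monotone convergence handles this cleanly without needing any uniform integrability argument. The hypothesis $\xi \in \call^2$ ensures the limit is finite, so the difference $\bbe[S_n^2] - \bbe[S_n]^2$ is well-defined in the limit.
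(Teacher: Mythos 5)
Your argument is correct and follows essentially the same route as the paper: establish the finite additivity $\Var[S_n] = \sum_{k=1}^n \Var[\xi_k]$ by independence, then pass to the limit using monotone convergence applied to the increasing sequences $(S_n)$ and $(S_n^2)$, with $\xi \in \call^2$ guaranteeing finiteness of the limits. No gaps.
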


\begin{proof}
By the monotone convergence theorem we have
\begin{align*}
\bbe[\xi^2] &= \bbe \Bigg[ \bigg( \sum_{k=1}^{\infty} \xi_k \bigg)^2 \Bigg] = \bbe \Bigg[ \bigg( \lim_{n \to \infty} \sum_{k=1}^n \xi_k \bigg)^2 \Bigg] = \bbe \Bigg[ \lim_{n \to \infty} \bigg( \sum_{k=1}^n \xi_k \bigg)^2 \Bigg]
\\ &= \lim_{n \to \infty} \bbe \Bigg[ \bigg( \sum_{k=1}^n \xi_k \bigg)^2 \Bigg]
\end{align*}
as well as
\begin{align*}
\bbe[\xi]^2 &= \bbe \Bigg[ \sum_{k=1}^{\infty} \xi_k \Bigg]^2 = \bbe \Bigg[ \lim_{n \to \infty} \sum_{k=1}^n \xi_k \Bigg]^2 = \Bigg( \lim_{n \to \infty} \bbe \Bigg[ \sum_{k=1}^n \xi_k \Bigg] \Bigg)^2
\\ &= \lim_{n \to \infty} \bbe \Bigg[ \sum_{k=1}^n \xi_k \Bigg]^2.
\end{align*}
Therefore, taking into account the independence of the random variables $(\xi_k)_{k \in \bbn}$ we obtain
\begin{align*}
\Var[\xi] &= \bbe[\xi^2] - \bbe[\xi]^2 = \lim_{n \to \infty} \Bigg( \bbe \Bigg[ \bigg( \sum_{k=1}^n \xi_k \bigg)^2 \Bigg] - \bbe \Bigg[ \sum_{k=1}^n \xi_k \Bigg]^2 \Bigg)
\\ &= \lim_{n \to \infty} \Var \bigg[ \sum_{k=1}^n \xi_k \bigg] = \lim_{n \to \infty} \sum_{k=1}^n \Var[\xi_k] = \sum_{k=1}^{\infty} \Var[\xi_k],
\end{align*}
completing the proof.
\end{proof}

\begin{proposition}\label{prop-chi-square-weighted}
We have
\begin{align*}
\bbe \big[ \| Y \|^2 \big] &= \tr(Q) + \| \zeta \|^2,
\\ \Var \big[ \| Y \|^2 \big] &= 2 \big( \| Q \|_{L_2}^2 + 2 \| Q^{1/2} \zeta \|^2 \big).
\end{align*}
\end{proposition}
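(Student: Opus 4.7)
The plan is to handle the two formulas separately: the expectation by an elementary expansion, and the variance by reducing to a sum of one-variable $\N(\mu,\sigma^2)$-type variance computations via the series representation of $Y$.

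For the expectation I would write $\|Y\|^2 = \|Y-\zeta\|^2 + 2\langle Y-\zeta,\zeta\rangle + \|\zeta\|^2$, take expectations, and invoke Proposition \ref{prop-Gauss-rv}(b),(c). The cross term vanishes because $\bbe[Y-\zeta] = 0$, and the residual term gives $\tr(Q)$, producing $\bbe[\|Y\|^2] = \tr(Q) + \|\zeta\|^2$.

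For the variance, I would apply the spectral decomposition from Remark \ref{rem-spectral} and the series representation from Lemma \ref{lemma-series-Gauss}. Decomposing $H = \ker(Q) \oplus H_2$ with $H_2 = \overline{\lin}\{e_k : k \in K\}$, write $\zeta = \zeta^{(1)} + \sum_{k}\zeta_k e_k$ where $\zeta^{(1)} \in \ker(Q)$ and $\zeta_k := \langle \zeta,e_k\rangle$. Then
\begin{align*}
Y = \zeta^{(1)} + \sum_{k \in K}\bigl(\zeta_k + \sqrt{\lambda_k}\beta_k\bigr) e_k,
\end{align*}
and by Pythagoras
\begin{align*}
\|Y\|^2 = \|\zeta^{(1)}\|^2 + \sum_{k \in K} \bigl(\zeta_k + \sqrt{\lambda_k}\beta_k\bigr)^2.
\end{align*}
The summands are nonnegative, mutually independent (because the $\beta_k$ are), each is in $\call^2$, and the total sum lies in $\call^2$ by Proposition \ref{prop-Gauss-moments}. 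Hence Lemma \ref{lemma-var-series} applies and gives
\begin{align*}
\Var[\|Y\|^2] = \sum_{k \in K}\Var\bigl[(\zeta_k + \sqrt{\lambda_k}\beta_k)^2\bigr].
\end{align*}

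Since $\zeta_k + \sqrt{\lambda_k}\beta_k \sim \N(\zeta_k,\lambda_k)$, the standard one-dimensional moment identity $\Var[Z^2] = 2\sigma^4 + 4\mu^2 \sigma^2$ for $Z \sim \N(\mu,\sigma^2)$ yields $\Var[(\zeta_k + \sqrt{\lambda_k}\beta_k)^2] = 2\lambda_k^2 + 4\lambda_k \zeta_k^2$. Summing and identifying $\sum_{k}\lambda_k^2 = \|Q\|_{L_2}^2$ and $\sum_{k}\lambda_k \zeta_k^2 = \|Q^{1/2}\zeta\|^2$ (the latter because $\zeta^{(1)} \in \ker(Q) = \ker(Q^{1/2})$) produces the claimed expression. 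The only step requiring care is verifying the hypotheses of Lemma \ref{lemma-var-series} in the possibly countably infinite case, but nonnegativity is immediate from squaring, independence follows directly from the independence of the $\beta_k$'s, and the global $L^2$ bound is supplied by Proposition \ref{prop-Gauss-moments}.
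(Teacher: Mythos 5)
Your proposal is correct and follows essentially the same route as the paper: the variance is computed exactly as in the paper's proof, via the series representation of Lemma \ref{lemma-series-Gauss}, Lemma \ref{lemma-var-series}, and the one-dimensional Gaussian second-moment identity (which the paper phrases through the variance of the noncentral $\chi_1^2$ distribution). Your expectation computation by expanding $\|Y\|^2=\|Y-\zeta\|^2+2\langle Y-\zeta,\zeta\rangle+\|\zeta\|^2$ is a minor, equally valid variation on the paper's termwise monotone-convergence argument, and your explicit handling of the component $\zeta^{(1)}\in\ker(Q)$ is in fact slightly more careful than the paper's proof, which tacitly writes $\zeta=\sum_{k\in K}\langle\zeta,e_k\rangle e_k$.
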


\begin{proof}
By the series representation (\ref{series-Gauss}) from Lemma \ref{lemma-series-Gauss} we have
\begin{align*}
Y = \zeta + \sum_{k \in K} \sqrt{\lambda_k} \beta_k e_k = \sum_{k \in K} \big( \langle \zeta,e_k \rangle + \sqrt{\lambda_k} \beta_k \big) e_k,
\end{align*}
and hence
\begin{align*}
\| Y \|^2 = \sum_{k \in K} \big| \langle \zeta,e_k \rangle + \sqrt{\lambda_k} \beta_k \big|^2 = \sum_{k \in K} \lambda_k \bigg| \frac{\langle \zeta,e_k \rangle}{\sqrt{\lambda_k}} + \beta_k \bigg|^2.
\end{align*}
For each $k \in K$ we have
\begin{align*}
\frac{\langle \zeta,e_k \rangle}{\sqrt{\lambda_k}} + \beta_k \sim \N \bigg( \frac{\langle \zeta,e_k \rangle}{\sqrt{\lambda_k}}, 1 \bigg),
\end{align*}
and hence
\begin{align*}
\bigg| \frac{\langle \zeta,e_k \rangle}{\sqrt{\lambda_k}} + \beta_k \bigg|^2 \sim \chi_1^2 \bigg( \frac{|\langle \zeta,e_k \rangle|^2}{\lambda_k} \bigg).
\end{align*}
Therefore, by the monotone convergence theorem we obtain
\begin{align*}
\bbe \big[ \| Y \|^2 \big] &= \sum_{k \in K} \lambda_k \, \bbe \Bigg[ \bigg| \frac{\langle \zeta,e_k \rangle}{\sqrt{\lambda_k}} + \beta_k \bigg|^2 \Bigg] = \sum_{k \in K} \lambda_k \bigg( 1 + \frac{|\langle \zeta,e_k \rangle|^2}{\lambda_k} \bigg)
\\ &= \sum_{k \in K} \lambda_k + \sum_{k \in K} |\langle \zeta,e_k \rangle|^2 = \tr(Q) + \| \zeta \|^2.
\end{align*}
Furthermore, by Proposition \ref{prop-Gauss-moments} and Lemma \ref{lemma-var-series} we have
\begin{align*}
\Var \big[ \| Y \|^2 \big] &= \sum_{k \in K} \lambda_k^2 \, \Var \Bigg[ \bigg| \frac{\langle \zeta,e_k \rangle}{\sqrt{\lambda_k}} + \beta_k \bigg|^2 \Bigg] = 2 \sum_{k \in K} \lambda_k^2 \bigg( 1 + 2 \frac{|\langle \zeta,e_k \rangle|^2}{\lambda_k} \bigg)
\\ &= 2 \bigg( \sum_{k \in K} \lambda_k^2 + 2 \sum_{k \in K} \lambda_k | \langle \zeta,e_k \rangle |^2 \bigg) = 2 \big( \| Q \|_{L_2}^2 + 2 \| Q^{1/2} \zeta \|^2 \big),
\end{align*}
where for the last step we note that
\begin{align*}
Q^{1/2} \zeta = \sum_{k \in K} \la \zeta,e_k \ra Q^{1/2} e_k = \sum_{k \in K} \sqrt{\lambda_k} \la \zeta,e_k \ra e_k.
\end{align*}
This completes the proof.
\end{proof}

\begin{corollary}\label{cor-expectation-norm-square}
For each $T \in L(H)$ we have
\begin{align*}
\bbe[\| T(Y) \|^2] &= \tr(T Q T^*) + \| T \zeta \|^2,
\\ \Var[\| T(Y) \|^2] &= 2 \big( \| T Q T^* \|_{L_2}^2 + 2 \| (T Q T^*)^{1/2} T \zeta \|^2 \big).
\end{align*}
\end{corollary}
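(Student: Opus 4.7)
The statement should follow directly from combining the transformation property of Gaussian random variables with the previously computed moments of $\|Y\|^2$, so I do not expect any real obstacle.

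My plan is to apply Proposition \ref{prop-Gauss-rv}(d) to push the distribution of $Y \sim \N(\zeta, Q)$ forward under the linear operator $T \in L(H)$. This yields $T(Y) \sim \N(T\zeta, TQT^*)$, where $TQT^* \in L_1^+(H)$ is again a nonnegative self-adjoint nuclear operator (it is self-adjoint by construction, nonnegative because $\langle TQT^* x, x\rangle = \langle Q T^* x, T^* x\rangle \geq 0$, and nuclear since $L_1(H)$ is a two-sided ideal in $L(H)$). Hence $T(Y)$ satisfies the hypotheses of Proposition \ref{prop-chi-square-weighted} with mean vector $T\zeta$ and covariance operator $TQT^*$ in place of $\zeta$ and $Q$.

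Applying Proposition \ref{prop-chi-square-weighted} to $T(Y)$ then gives
\begin{align*}
\bbe[\| T(Y) \|^2] &= \tr(T Q T^*) + \| T \zeta \|^2,
\\ \Var[\| T(Y) \|^2] &= 2 \big( \| T Q T^* \|_{L_2}^2 + 2 \| (T Q T^*)^{1/2} T \zeta \|^2 \big),
\end{align*}
which is precisely the claim. This is the entire proof; no further computation is required.
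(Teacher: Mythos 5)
Your proof is correct and follows exactly the paper's route: the paper likewise derives the corollary as an immediate consequence of Proposition \ref{prop-Gauss-rv} (giving $T(Y) \sim \N(T\zeta, TQT^*)$) combined with Proposition \ref{prop-chi-square-weighted}. Your added verification that $TQT^*$ is again a nonnegative self-adjoint nuclear operator is a sensible bit of extra care that the paper leaves implicit.
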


\begin{proof}
This is an immediate consequence of Propositions \ref{prop-Gauss-rv} and \ref{prop-chi-square-weighted}.
\end{proof}

\begin{proposition}\label{prop-est-dist-subspaces}
Let $U \subset H$ be a closed subspace such that $U$ and $U^{\perp}$ are $Q$-invariant, and we have $\zeta \in U$ and $Q(\Pi_H - \Pi_U) \neq 0$.
\begin{enumerate}
\item[(a)] We define $\lambda > 0$ and $n \in \bbn$ as
\begin{align*}
\lambda := \| Q(\Pi_H - \Pi_U) \| \quad \text{and} \quad n := \dim \ker ( \lambda - Q (\Pi_H - \Pi_U) ).
\end{align*}
Then there exists a linear operator $S \in L(H)$ with $U \subset \ker(S)$ such that the following conditions are satisfied:
\begin{itemize}
\item $\Pi_U Y$ and $S(Y)$ are independent.

\item We have $\| S(Y) \| \leq \| Y - \Pi_U Y \|$.

\item We have $\| S(Y) \|^2 \sim \Gamma(\frac{n}{2},\frac{1}{2 \lambda})$.
\end{itemize}
\item[(b)] Let $U_0 \subset U$ be another closed subspace such that $U_0$ and $U_0^{\perp} \cap U$ are $Q$-invariant. Furthermore, we assume that $Q(\Pi_U - \Pi_{U_0}) \neq 0$ and that the range of $Q(\Pi_U - \Pi_{U_0})$ is finite dimensional. We define $\mu > 0$ and $m \in \bbn$ as
\begin{align*}
\mu := \| Q(\Pi_U - \Pi_{U_0}) \| \quad \text{and} \quad m := \dim \ran (Q(\Pi_U - \Pi_{U_0})) < \infty. 
\end{align*}
Then there exists another linear operator $T \in L(H)$ with $U_0 \oplus U^{\perp} \subset \ker(T)$ such that the following conditions are satisfied:
\begin{itemize}
\item $S(Y)$ and $T(Y)$ are independent.

\item We have $\| \Pi_U Y - \Pi_{U_0} Y \| \leq \| T(Y) \|$.

\item We have $\| T(Y) \|^2 \sim \Gamma(\frac{m}{2},\frac{1}{2 \mu})$.
\end{itemize}
\end{enumerate}
\end{proposition}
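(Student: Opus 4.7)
The plan is to construct $S$ and $T$ as (possibly rescaled) orthogonal projections onto finite-dimensional $Q$-eigenspaces, and then to invoke Lemma~\ref{lemma-independence-ker} together with the standard reduction of a finite quadratic form in Gaussian coordinates to a scaled chi-squared distribution.

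For part (a), I would first identify $A := Q(\Pi_H - \Pi_U)$ with $\Pi_{U^{\perp}} Q \Pi_{U^{\perp}}$, using the $Q$-invariance of $U$ and $U^{\perp}$ and self-adjointness of $Q$. Since $A$ is nonnegative self-adjoint and nuclear, hence compact, its largest eigenvalue $\lambda$ has finite multiplicity $n$; let $W \subset U^{\perp}$ be the corresponding eigenspace. The crucial observation is that any $v \in W$ is automatically a $Q$-eigenvector with eigenvalue $\lambda$: $Qv \in U^{\perp}$ by invariance, so $Av = Qv = \lambda v$. Picking an orthonormal basis $v_1,\ldots,v_n$ of $W$ and setting $S := \Pi_W$, one has $U \subset W^{\perp} = \ker(S)$, and $\|S(Y)\| = \|\Pi_W \Pi_{U^{\perp}} Y\| \leq \|Y - \Pi_U Y\|$ because $W \subset U^{\perp}$. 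Since $\zeta \perp v_k$ and $\la Q v_i, v_j \ra = \lambda \delta_{ij}$, the coordinates $\la Y, v_k \ra$ are independent $\N(0,\lambda)$, giving $\|S(Y)\|^2 \sim \lambda \chi_n^2 = \Gamma(n/2,1/(2\lambda))$. Independence of $\Pi_U Y$ and $S(Y)$ follows from Lemma~\ref{lemma-independence-ker} applied to the $Q$-invariant subspaces $U$ and $W$, whose intersection is trivial.

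For part (b), I would play the same game with $B := Q(\Pi_U - \Pi_{U_0}) = \Pi_{U_0^{\perp} \cap U} Q \Pi_{U_0^{\perp} \cap U}$ (using Lemma~\ref{lemma-Pi-U-V} and the $Q$-invariance of $U_0^{\perp} \cap U$), but now I would use the \emph{whole} range $W_T := \ran(B) \subset U_0^{\perp} \cap U$, which has dimension $m$. The spectral theorem for $B|_{W_T}$ produces an orthonormal basis $w_1,\ldots,w_m$ of $W_T$ consisting of $Q$-eigenvectors with positive eigenvalues $\mu_1,\ldots,\mu_m$, where $\mu = \max_i \mu_i$. I would define the rescaled projection
\begin{align*}
T(y) := \sum_{i=1}^m \sqrt{\mu / \mu_i} \, \la y, w_i \ra \, w_i.
\end{align*}
Because $W_T \subset U_0^{\perp} \cap U$, we have $U_0 \oplus U^{\perp} \subset W_T^{\perp} = \ker(T)$. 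As in (a), the $\la Y, w_i \ra$ are independent $\N(0,\mu_i)$, so $\|T(Y)\|^2 = \sum_i (\mu/\mu_i) |\la Y, w_i \ra|^2 \sim \mu \chi_m^2 = \Gamma(m/2, 1/(2\mu))$.

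The hard part will be the norm inequality $\|\Pi_U Y - \Pi_{U_0} Y\| \leq \|T(Y)\|$, since the subspace $U_0^{\perp} \cap U$ onto which the left-hand side projects may be infinite-dimensional while $W_T$ is only $m$-dimensional. The key is that $B|_{U_0^{\perp} \cap U}$ is a self-adjoint compact operator on $U_0^{\perp} \cap U$ with range $W_T$, so its kernel inside $U_0^{\perp} \cap U$ is $W_T^{\perp} \cap (U_0^{\perp} \cap U)$, and $Q$ vanishes on this kernel. Since $\zeta \in U_0$ is orthogonal to $U_0^{\perp} \cap U$, Proposition~\ref{prop-Gauss-rv}(d) applied to the projection onto $W_T^{\perp} \cap (U_0^{\perp} \cap U)$ yields a Gaussian with zero mean and zero covariance operator, hence identically zero almost surely. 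Therefore $\Pi_{U_0^{\perp} \cap U} Y = \Pi_{W_T} Y$ almost surely, and since each $\mu/\mu_i \geq 1$ the pointwise bound $\|T(Y)\|^2 \geq \sum_i |\la Y, w_i \ra|^2 = \|\Pi_{W_T} Y\|^2$, together with Lemma~\ref{lemma-Pi-U-V}, supplies the required inequality. Finally, independence of $S(Y)$ and $T(Y)$ follows from a second application of Lemma~\ref{lemma-independence-ker} to the $Q$-invariant subspaces $W \subset U^{\perp}$ and $W_T \subset U$, which again intersect trivially.
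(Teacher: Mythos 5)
Your construction coincides with the paper's: $S$ is the orthogonal projection onto the top eigenspace of $Q\Pi_{U^{\perp}}$, $T$ is the projection onto $\ran(Q(\Pi_U-\Pi_{U_0}))$ rescaled by the weights $\sqrt{\mu/\mu_i}$, independence comes from Lemma~\ref{lemma-independence-ker}, and the Gamma laws come from reading off the Gaussian coordinates (the paper routes this through the series representation of Lemma~\ref{lemma-series-Gauss}, while you compute the covariances directly --- a cosmetic difference). Your explicit justification that $\Pi_{U_0^{\perp}\cap U}Y=\Pi_{W_T}Y$ almost surely makes precise a step the paper leaves implicit; note only that both your argument and the paper's tacitly use $\zeta\perp\ran(Q(\Pi_U-\Pi_{U_0}))$ (e.g.\ $\zeta\in U_0$), which holds in the application in Theorem~\ref{thm-test} but is not literally among the proposition's hypotheses.
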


\begin{proof}
Applying the spectral theorem to the operators $Q|_U$ and $Q|_{U^{\perp}}$ and recalling (\ref{Q-diagonal}), we may assume there are disjoint index sets $I,J \subset K$ with $I \cup J = K$ such that $(e_i)_{i \in I}$ is an orthonormal system in $U$ and $(e_j)_{j \in J}$ is an orthonormal system in $U^{\perp}$.

\noindent(a) We define the $n$-dimensional subspace $V \subset U^{\perp}$ as
\begin{align*}
V := \ker ( \lambda - Q (\Pi_H - \Pi_U) ),
\end{align*}
and we define the linear operator $S \in L(H)$ as the orthogonal projection $S := \Pi_V$. Then we have $U^{\perp} = \ker(\Pi_U)$ and $U \subset V^{\perp} = \ker(S)$. Hence, by Lemma \ref{lemma-independence-ker} the random variables $\Pi_U Y$ and $S(Y)$ are independent. Furthermore, we have
\begin{align*}
\| S(Y) \| = \| \Pi_V Y \| \leq \| \Pi_{U^{\perp}} Y \| = \| Y - \Pi_U Y \|.
\end{align*}
Applying the spectral theorem to the operators $Q|_V$ and $Q|_{V^{\perp} \cap U^{\perp}}$ and recalling (\ref{Q-diagonal}), we may assume there is an index set $J_0 \subset J$ with $|J_0| = n$ such that $(e_j)_{j \in J_0}$ is an orthonormal basis of $V$. Using the series representation (\ref{series-Gauss}) from Lemma \ref{lemma-series-Gauss} we obtain
\begin{align*}
S(Y) = \sqrt{\lambda} \sum_{j \in J_0} \beta_j e_j,
\end{align*}
and hence
\begin{align*}
\| S(Y) \|^2 = \lambda \sum_{j \in J_0} \beta_j^2 \sim \Gamma \bigg( \frac{n}{2}, \frac{1}{2 \lambda} \bigg).
\end{align*}
(b) Applying the spectral theorem to the operators $Q|_{U_0}$ and $Q|_{U_0^{\perp} \cap U}$ and recalling (\ref{Q-diagonal}), we may assume there is an index set $I_0 \subset I$ with $|I_0| = m$ such that $(e_i)_{i \in I_0}$ is an orthonormal basis of $\ran (Q(\Pi_U - \Pi_{U_0}))$. We define the linear operator $T \in L(H)$ as
\begin{align*}
Ty := \sum_{i \in I_0} \sqrt{\frac{\mu}{\lambda_i}} \la y,e_i \ra e_i, \quad y \in H.
\end{align*}
Then we have $U^{\perp} \subset U_0 \oplus U^{\perp} \subset \ker(T)$. Furthermore, by part (a) we have $U \subset \ker(S)$. Hence, by Lemma \ref{lemma-independence-ker} the random variables $S(Y)$ and $T(Y)$ are independent. Using the series representation (\ref{series-Gauss}) from Lemma \ref{lemma-series-Gauss} we obtain
\begin{align*}
T(Y) = \sqrt{\mu} \sum_{i \in I_0} \beta_i e_i,
\end{align*}
and hence
\begin{align*}
\| T(Y) \|^2 = \mu \sum_{i \in I_0} \beta_i^2 \sim \Gamma \bigg( \frac{n}{2}, \frac{1}{2 \mu} \bigg).
\end{align*}
Furthermore, since $\lambda_i \leq \mu$ for all $i \in I \setminus I_0$, we have
\begin{align*}
\| \Pi_U Y - \Pi_{U_0} Y \|^2 = \| \Pi_{U_0^{\perp} \cap U} Y \|^2 = \sum_{i \in I_0} \lambda_i \beta_i^2 \leq \mu \sum_{i \in I_0} \beta_i^2 = \| T(Y) \|^2,
\end{align*}
completing the proof.
\end{proof}

\section{The Student's $t$-distribution and the Fisher distribution}\label{app-distributions}

In this appendix we provide the required results about the Student's $t$-distribution and the Fisher distribution. Let us recall that the Gamma distribution $\Gamma(\alpha,\beta)$ with parameters $\alpha,\beta \in (0,\infty)$ is the absolutely continuous probability measure on $(\bbr,\calb(\bbr))$ with density
\begin{align*}
f(x) = \frac{\beta^{\alpha}}{\Gamma(\alpha)} x^{\alpha - 1} e^{-\beta x} \bbI_{(0,\infty)}(x), \quad x \in \bbr.
\end{align*}

\begin{proposition}\label{prop-Pearson}
For two independent random variables $X \sim {\rm N}(0,1)$ and $Y \sim \Gamma(\alpha,\beta)$ we have
\begin{align*}
\frac{X}{\sqrt{Y}} = \sqrt{\frac{\beta}{\alpha}} Z
\end{align*}
with a random variable $Z \sim t_{2 \alpha}$.
\end{proposition}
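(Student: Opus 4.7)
The plan is to reduce the statement to the standard definition of the Student's $t$-distribution. Recall that by definition $t_\nu$ is the distribution of $X/\sqrt{V/\nu}$ whenever $X \sim \N(0,1)$ and $V \sim \chi_\nu^2 = \Gamma(\nu/2, 1/2)$ are independent. So the task is simply to rescale $Y$ into a chi-square random variable with the right degrees of freedom, and then bookkeep the resulting constants.

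First I would observe that if $Y \sim \Gamma(\alpha,\beta)$ and $c>0$, then the change of variables $v = cy$ in the density
\[
f_Y(y) = \frac{\beta^\alpha}{\Gamma(\alpha)} y^{\alpha-1} e^{-\beta y} \bbI_{(0,\infty)}(y)
\]
shows that $cY \sim \Gamma(\alpha, \beta/c)$. Taking $c := 2\beta$ yields $V := 2\beta Y \sim \Gamma(\alpha,\tfrac{1}{2}) = \chi_{2\alpha}^2$. Since $V$ is a measurable function of $Y$ alone, $V$ is independent of $X$.

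Next, set
\[
Z := \frac{X}{\sqrt{V/(2\alpha)}} = \frac{X}{\sqrt{2\beta Y / (2\alpha)}} = \sqrt{\frac{\alpha}{\beta}} \cdot \frac{X}{\sqrt{Y}}.
\]
By the defining characterization of the Student's $t$-distribution recalled above, $Z \sim t_{2\alpha}$. Rearranging gives
\[
\frac{X}{\sqrt{Y}} = \sqrt{\frac{\beta}{\alpha}} \, Z,
\]
which is exactly the claimed identity.

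There is no real obstacle here; the only subtlety is keeping track of the rate parameter versus the scale parameter of the Gamma family so that $2\beta Y$ (and not, e.g., $2Y$) is the quantity that hits the $\chi_{2\alpha}^2$-distribution. Everything else is a one-line application of the definition of $t_{2\alpha}$ together with the preservation of independence under measurable functions.
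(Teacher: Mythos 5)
Your proof is correct and is essentially the paper's argument: both rescale the Gamma variable so that the standard characterization of $t_{2\alpha}$ applies and then track the constant, the only cosmetic difference being that you normalize $Y$ to $2\beta Y \sim \chi^2_{2\alpha}$ while the paper normalizes to $\tfrac{\beta}{\alpha}Y \sim \Gamma(\alpha,\alpha)$. The bookkeeping of the rate parameter is right, so nothing is missing.
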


\begin{proof}
We define the random variables
\begin{align*}
W := \frac{\beta}{\alpha} Y \quad \text{and} \quad Z := \frac{X}{\sqrt{W}}.
\end{align*}
Then we have $W \sim \Gamma(\alpha,\alpha)$, and hence $Z \sim t_{2 \alpha}$. Moreover, we obtain
\begin{align*}
\frac{X}{\sqrt{Y}} = \sqrt{\frac{\beta}{\alpha}} \frac{X}{\sqrt{W}} = \sqrt{\frac{\beta}{\alpha}} Z,
\end{align*}
completing the proof.
\end{proof}

We remark that the distribution of $X / \sqrt{Y}$ is a Pearson type VII distribution; cf. \cite[p. 450]{Pearson}. More precisely, the \emph{Pearson type VII distribution} ${\rm P}(\alpha,m)$ with parameters $\alpha \in (0,\infty)$, $m \in (\frac{1}{2},\infty)$ is the absolutely continuous probability measure on $(\bbr,\calb(\bbr))$ with density
\begin{align}
f(x) = \frac{\Gamma(m)}{\sqrt{\pi} \alpha \Gamma(m - \frac{1}{2})} \bigg( 1 + \frac{x^2}{\alpha^2} \bigg)^{-m}, \quad x \in \bbr.
\end{align}
Note that $t_{\nu} = {\rm P}(\sqrt{\nu}, (\nu+1) / 2)$ for each $\nu > 0$. Furthermore, for $Z \sim {\rm P}(\alpha,m)$ and $c > 0$ we have $cZ \sim {\rm P}(c\alpha,m)$. Therefore, in the situation of Proposition \ref{prop-Pearson} we obtain
\begin{align*}
\frac{X}{\sqrt{Y}} \sim {\rm P} \bigg( \sqrt{2 \beta}, \alpha + \frac{1}{2} \bigg).
\end{align*}

\begin{proposition}\label{prop-F-distribution}
For two independent random variables independent $X \sim \Gamma(\alpha,\beta)$ and $Y \sim \Gamma(\gamma,\delta)$ we have
\begin{align*}
\frac{X}{Y} =  \frac{\alpha \delta}{\beta \gamma} Z
\end{align*}
with a random variable $Z \sim {\rm F}_{2 \alpha, 2 \gamma}$.
\end{proposition}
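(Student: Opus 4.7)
The plan is to mimic the structure of the proof of Proposition \ref{prop-Pearson}: rescale $X$ and $Y$ so that they become Gamma variables with a rate parameter equal to $1/2$, which are (generalized) $\chi^2$-distributions with $2\alpha$ and $2\gamma$ degrees of freedom, respectively; then the Fisher distribution pops out directly from its definition.

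Concretely, I would recall the elementary scaling property that if $X \sim \Gamma(\alpha,\beta)$ and $c>0$, then $cX \sim \Gamma(\alpha,\beta/c)$. Setting $W := 2\beta X$ and $V := 2\delta Y$, I obtain two independent random variables with $W \sim \Gamma(\alpha, \tfrac{1}{2})$ and $V \sim \Gamma(\gamma, \tfrac{1}{2})$, i.e.\ $W$ and $V$ are (possibly non-integer index) $\chi^2$-variables with $2\alpha$ and $2\gamma$ degrees of freedom.

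By the definition of the Fisher distribution,
\begin{align*}
Z := \frac{W/(2\alpha)}{V/(2\gamma)} = \frac{\gamma \, W}{\alpha \, V} \sim {\rm F}_{2\alpha,\, 2\gamma}.
\end{align*}
Substituting the definitions of $W$ and $V$ gives
\begin{align*}
Z = \frac{\gamma \cdot 2\beta X}{\alpha \cdot 2 \delta Y} = \frac{\beta \gamma}{\alpha \delta}\, \frac{X}{Y},
\end{align*}
and rearranging yields the claimed identity $\frac{X}{Y} = \frac{\alpha \delta}{\beta \gamma} Z$.

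There is no real obstacle here; the only delicate point is to be clear that the Fisher distribution ${\rm F}_{m,n}$ is being used in its general form as the distribution of $(U/m)/(V/n)$ for independent $U \sim \Gamma(m/2,1/2)$ and $V \sim \Gamma(n/2,1/2)$, allowing non-integer $m,n$ so that $2\alpha$ and $2\gamma$ are permitted degrees of freedom. Once this convention is fixed, the argument is a one-line change of variables in the spirit of the preceding Proposition \ref{prop-Pearson}.
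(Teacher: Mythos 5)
Your proof is correct and follows essentially the same route as the paper: rescale $X$ and $Y$ into standardized Gamma variables and read off the Fisher distribution from the definition of ${\rm F}_{2\alpha,2\gamma}$ as a normalized ratio. The paper normalizes directly to $\Gamma(\alpha,\alpha)$ and $\Gamma(\gamma,\gamma)$ whereas you pass through $\Gamma(\alpha,\tfrac12)$ and $\Gamma(\gamma,\tfrac12)$, but the resulting ratio $Z$ is literally the same random variable, so this is only a notational difference.
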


\begin{proof}
We define the random variables
\begin{align*}
V := \frac{\beta}{\alpha} X, \quad W := \frac{\delta}{\gamma} Y \quad \text{and} \quad Z := \frac{V}{W}.
\end{align*}
Then we have $V \sim \Gamma(\alpha,\alpha)$ and $W \sim \Gamma(\gamma,\gamma)$, and hence $Z \sim {\rm F}_{2 \alpha, 2 \gamma}$. Moreover, we obtain
\begin{align*}
\frac{X}{Y} = \frac{\alpha \delta}{\beta \gamma} \frac{V}{W} = \frac{\alpha \delta}{\beta \gamma} Z,
\end{align*}
completing the proof.
\end{proof}

We remark that the distribution of $X/Y$ is a generalized Fisher distribution; cf. \cite[Sec. 8.1]{Johnson-Kotz}. More precisely, the \emph{generalized Fisher distribution} ${\rm F}_{m,n}(a,b)$ with parameters $m,n > 0$ and $a,b>0$ is the absolutely continuous probability measure on $(\bbr,\calb(\bbr))$ with density
\begin{align*}
f(x) = \frac{(\frac{m}{n})^\frac{m}{2} (ab)^{-1}}{B(\frac{m}{2},\frac{n}{2})} \frac{(\frac{x}{a})^{\frac{m}{2b}-1}}{\big(1 + \frac{m}{n} (\frac{x}{a})^{\frac{1}{b}}\big)^{\frac{m+n}{2}}} \bbI_{(0,\infty)}(x),
\end{align*}
where $B : (0,\infty) \times (0,\infty) \to (0,\infty)$ denotes the Beta function. Note that ${\rm F}_{m,n}(1,1) = {\rm F}_{m,n}$ for all $m,n > 0$. Furthermore, for $Z \sim {\rm F}_{m,n}(a,b)$ and $c > 0$ we have $cZ \sim {\rm F}_{m,n}(ca,b)$. Therefore, in the situation of Proposition \ref{prop-F-distribution} we obtain
\begin{align*}
\frac{X}{Y} \sim {\rm F}_{2 \alpha, 2 \gamma} \bigg( \frac{\alpha \delta}{\beta \gamma}, 1 \bigg).
\end{align*}

\end{appendix}

\end{document}